\title{\Large The upsilon invariant at $1$ 
of $3$-braid knots} 
\author{Paula Truöl}
\address{Department of Mathematics, ETH Zurich, Switzerland}
\email{paula.truoel@math.ethz.ch}
\begin{document}

\def\subjclassname{\textup{2020} Mathematics Subject Classification}
\expandafter\let\csname subjclassname@1991\endcsname=\subjclassname
\subjclass{
57K10, 
57K18, 
20F36. 
}
\keywords{3-braids, Upsilon invariant, alternation number, fractional Dehn twist coefficient.}

\begin{abstract}
We provide explicit formulas for the integer-valued smooth concordance invariant $\upsilon(K) = \Upsilon_K(1)$ for every $3$-braid knot $K$. We determine this invariant, which was defined by Ozsv\'ath, Stipsicz and Szab\'o \cite{OSS2017}, by constructing cobordisms between $3$-braid knots and (connected sums of) torus knots. 
As an application, we show that for positive $3$-braid knots $K$ several alternating distances all equal the sum $g(K) + \upsilon(K)$, where $g(K)$ denotes the $3$-genus of $K$.
In particular, we compute the alternation number, the dealternating number and the Turaev genus for all positive $3$-braid knots. We also provide upper and lower bounds on the alternation number and dealternating number of every $3$-braid knot which differ by $1$.
\end{abstract}

\maketitle

\section{Introduction}

We study \emph{knots} in the $3$-sphere $S^3$, \ie non-empty, connected, oriented, closed smooth $1$-dimensional submanifolds of $S^3$, considered up to ambient isotopy.
Two knots $K$ and $J$ are called \emph{concordant} 
if there exists an annulus $A \cong S^1 \times [0,1] $ smoothly and properly embedded in $S^3 \times [0,1]$ such that $\partial A = K \times \{0\} \,\cup J \,\times \{1\}$ and such that the induced orientation on the boundary of the annulus agrees with the orientation of $K$, but is the opposite one on $J$.
Knots up to concordance form a group, the \emph{concordance group} $\CC$, with the group operation induced by connected sum. \\

In \cite{OSS2017}, Ozsv\'ath, Stipsicz and Szab\'o used the Heegaard Floer knot complex to define the invariant $\Upsilon_K$ of a knot $K$,
which induces a homomorphism from the knot concordance group to the group of real-valued piecewise linear functions on the interval $[0,2]$. 
The function $\Upsilon_K$ evaluated at $t=1$, $\upsilon(K):=\Upsilon_K(1)$, induces a homomorphism $\CC \to \Z$. In this article, we will call $\upsilon(K)$ \emph{upsilon of $K$}.\\

A $3$-\emph{braid} is an element of the \emph{braid group on three strands}, denoted $B_3$. The classical presentation of $B_3$ with generators $a$ and $b$ and relation $aba=bab$, the \emph{braid relation},
was introduced by Artin \cite{artin_1925}. A \emph{braid word} $\gamma$ --- a word in the generators of $B_3$ and their inverses --- defines a diagram for a (geometric) $3$-braid;
the generators $a$ and $b$ correspond to the geometric $3$-braids given by braid diagrams as in \Cref{fig:braidgroup}. In our figures, braid diagrams will always be oriented from bottom to top.
We denote by $\Delta$ the braid $aba = bab$, and note that its square $\Delta^2=(ab)^3$ (the positive full twist on three strands) generates the center of $B_3$ 
\cite[Theorem 3]{chow}. 
A \emph{$3$-braid knot} is a knot that arises as the closure $\widehat{\gamma}$ of a $3$-braid $\gamma$.\\

\captionsetup[subfigure]{labelfont=normalfont, labelformat = simple}
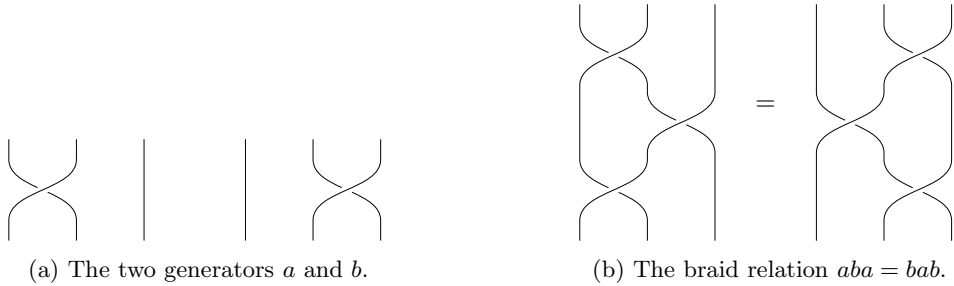
\begin{figure}[htbp]
     \centering
     \begin{subfigure}[t]{0.4\textwidth}
         \centering
\resizebox{\textwidth}{!}{%
         \begin{tikzpicture}
\pic[braid/number of strands=3] at (0,0) {braid=a_1^{-1}};
\pic[braid/number of strands=3] at (3.5,0) {braid=a_2^{-1}};
\end{tikzpicture}
}
         \caption{The two generators $a$ and $b$.}
     \end{subfigure}
     \hfill
     \begin{subfigure}[t]{0.4\textwidth}
         \centering
         \resizebox{\textwidth}{!}{%
         \begin{tikzpicture}
\pic[braid/number of strands=3] at (0,0) {braid=a_1^{-1} a_2^{-1} a_1^{-1}
};
\node[font=\large] at (2.75,-1.5) {\(=\)};
\pic[braid/number of strands=3] at (3.5,0) {braid=a_2^{-1} a_1^{-1} a_2^{-1}};
\end{tikzpicture}
}
         \caption{The braid relation $aba=bab$.}
     \end{subfigure}
        \caption{Generators and relation in the braid group $B_3$.}
        \label{fig:braidgroup}
\end{figure}

As our main result, we determine the upsilon invariant for all $3$-braid knots. More precisely, we show the following.

\begin{theorem}\label{thm:upsilon}
Let $\gamma = \Delta^{2\ell}a^{-p_1}b^{q_1}a^{-p_2}b^{q_2}\cdots a^{-p_r}b^{q_r}$ be a braid word in the generators $a$ and $b$ of $B_3$
for some integers $\ell\in \Z$, $r\geq 1$ and $p_i,q_i\geq 1$ for $i\in \{1, \dots, r\}$, where $\Delta^2=(ab)^3$.
Suppose that the closure $K=\widehat{\gamma}$ of $\gamma$ is a knot. Then its upsilon invariant is
\begin{align*}
\upsilon(K)&=\dfrac{\sum\limits_{i=1}^r (p_i - q_i) }{2}-2\ell.
\end{align*}
\end{theorem}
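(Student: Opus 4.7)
The plan is to exploit two fundamental properties of $\upsilon=\Upsilon(1)$: its additivity under connected sum, and the genus bound
\[
|\upsilon(K_1)-\upsilon(K_2)|\leq g(\Sigma)
\]
for any smooth oriented cobordism $\Sigma$ from $K_1$ to $K_2$. Since the $\upsilon$-values of the torus knots $T(2,n)$ — namely $-\lfloor n/2\rfloor$ for $n>0$ odd, and the opposite sign for the mirrors — are known, and since $\upsilon$ is additive, the strategy is, for each braid word $\gamma$ in the given Murasugi--Garside form, to build an oriented cobordism from $\widehat{\gamma}$ to an explicit connected sum of $T(2,\cdot)$ knots whose additive $\upsilon$ equals the right-hand side of the theorem, and whose genus is small enough that the cobordism inequality becomes tight.

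Concretely, I would realize this cobordism by oriented saddle moves (band attachments). Each positive block $b^{q_i}$ can, after a braid relation, be cut open by a band to split off a $T(2,\cdot)$-summand contributing the expected amount to $\upsilon$; likewise each negative block $a^{-p_i}$ splits off a summand with $\upsilon$ of the opposite sign. The central factor $\Delta^{2\ell}$, whose closure is a $(3,3\ell)$-torus link, is handled by a sequence of bands with genus-cost linear in $|\ell|$, reflecting the predicted $-2\ell$ contribution. Running this construction once on $\widehat\gamma$ and once on the mirror $-K$ (whose braid $\gamma^{-1}$ can itself be brought back into the stated normal form by the commutation $a\Delta^2=\Delta^2 b$ and its inverse) gives both directions of the cobordism inequality, and hence equality via $\upsilon(-K)=-\upsilon(K)$.

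The main obstacle, I expect, is genus bookkeeping. Each oriented saddle either raises the genus by $1$ or splits one component into two, and the construction must be choreographed so that exactly the right number of saddles perform each role — otherwise the cobordism inequality is strict and the target value of $\upsilon$ is not pinned down. This is where the algebra of $B_3$ does essential work: the braid relation $aba=bab$ and the commutation $a\Delta^2=\Delta^2 b$ are the identities that allow consecutive blocks to be regrouped so that bands land economically. A secondary technical difficulty is verifying the construction in degenerate cases of the normal form (small $r$, or $p_i$ or $q_i$ equal to $1$, or $\ell$ of small absolute value with the opposite sign to $\sum(p_i-q_i)$), where the generic recipe may collapse and the band placement must be adapted by hand.
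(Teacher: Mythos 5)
Your upper-bound half is essentially the paper's: the paper (Lemmas \ref{lem:upsilonUpperBound} and \ref{lem:Murasugiupper}) builds exactly the cobordism you describe, splitting each syllable into a $T_{2,\cdot}$-summand after rewriting the word with identities such as $(ab)^{3n+1}=a^2ba^3(aba^3)^{n-1}ba^n$ so that $\Delta^{2\ell}$ costs only $\ell$ extra saddles rather than $3\ell$; your ``genus bookkeeping'' worry is the right one and is resolved there. The genuine gap is in your lower bound. Running the same construction on the mirror cannot work, and not merely for bookkeeping reasons: a connected sum of $T_{2,\cdot}$-torus knots is alternating, and for every alternating knot $J$ one has $\upsilon(J)=-\tau(J)$. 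Hence for \emph{any} cobordism $C$ from $K$ to such a $J$ (whether you find it on $K$ or on $-K$ --- the inequality $|\upsilon(K)-\upsilon(J)|\leq g(C)$ is symmetric under mirroring, so the mirrored construction returns the same interval), the two bounds $\upsilon(K)\geq \upsilon(J)-g(C)=-\tau(J)-g(C)$ and $\tau(J)\geq\tau(K)-g(C)$ combine to give at best $\upsilon(K)\geq-\tau(K)$. But the theorem asserts, e.g.\ for the positive $3$-braid knots in its scope (which appear in the stated normal form with $\ell=r_G+\ell_G$ by \Cref{sec:comparisonnormalforms}), that $\upsilon(K)=-\tau(K)+(r_G+\ell_G-1)>-\tau(K)$. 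So no choreography of bands landing on alternating targets can close the gap; you need a second, non-alternating family of targets.

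The paper's escape is to squeeze $K$ between two \emph{different} kinds of torus-knot targets: the alternating connected sums $J_\varepsilon$ of $T_{2,\cdot}$'s for the upper bound, and the braid-index-$3$ torus knots $T_{3,3(\ell+r)+1}$ (for which $\upsilon\approx-\tfrac{2}{3}g$ is strictly larger than $-\tau=-g$) for the lower bound, via the genus-$1$ trick relating $\widehat{\gamma b^{2n}}$ to $\widehat{\gamma}\#T_{2,2n+1}$ (\Cref{ex:introstrategy}, \Cref{lem:Upsilon1_Delta2l+1}--\ref{lem:Upsilon1_Delta2lCase2}). The general (non-positive) case is then reduced to the positive one by trading each negative syllable $a^{-p_i}$ for a positive one at a cost of exactly one unit of genus (\Cref{lem:technical1,lem:technical2}), each trade accounting for exactly one unit of the discrepancy in the final formula. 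If you want to salvage your plan, this is the missing ingredient: identify a family of knots with $\upsilon\neq-\tau$ and known $\upsilon$ to serve as lower-bound anchors.
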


It follows from Murasugi's classification of the conjugacy classes of $3$-braids \cite[Proposition 2.1]{murasugibook} that indeed all $3$-braid knots --- except for the torus knots that are closures of $3$-braids --- are covered by \Cref{thm:upsilon}. 
However, for torus knots the invariant $\upsilon$ can be calculated explicitly by a combinatorial, inductive formula in terms of their Alexander polynomial \cite[Theorem 1.15]{OSS2017}; see \Cref{lem:Upsilontorus} below.
Hence, we have indeed determined $\upsilon(K)$ for all $3$-braid knots $K$.
\\

As an application of \Cref{thm:upsilon}, we show that the following invariants coincide for \emph{positive $3$-braid knots} --- knots that are the closure of positive $3$-braids.

\begin{cor}\label{cor:alt}
Let $K$ be a knot that is the closure of a \emph{positive $3$-braid}, i.e.~an element of $B_3$ that can be written as a word in the generators $a$ and $b$ only (no inverses). Then
\begin{align*}
\alt(K) &= \dalt(K) = g_T(K) = \mathcal{A}_s(K)
= g(K) +\upsilon(K) .
\end{align*}
\end{cor}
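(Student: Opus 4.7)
The plan is to sandwich the common value $g(K) + \upsilon(K)$ between an upper bound on the largest invariant in the chain and a lower bound on the smallest. The standard dominations $\alt(K) \le \dalt(K) \le g_T(K)$, valid for every knot, together with the analogous comparisons involving $\mathcal{A}_s(K)$, reduce the corollary to establishing two claims: an upper bound $g_T(K) \le g(K) + \upsilon(K)$ (and the corresponding bound for $\mathcal{A}_s(K)$), and a lower bound $g(K) + \upsilon(K) \le \alt(K)$.

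For the upper bound, the key inputs are Murasugi's normal form for conjugacy classes of positive $3$-braids \cite{murasugibook}, the slice--Bennequin equality $g(K) = g_4(K) = (c-2)/2$ for positive $3$-braid knots (with $c$ the number of crossings in a positive braid word), and the explicit value of $\upsilon(K)$ provided by \Cref{thm:upsilon}. Together these express $g(K) + \upsilon(K)$ as a concrete function of the braid exponents. Starting from a positive normal-form representative, I would then construct a diagram of $\widehat{\gamma}$ together with a distinguished set of $g(K) + \upsilon(K)$ crossings whose simultaneous change produces an alternating diagram, using the cobordisms from the proof of \Cref{thm:upsilon} as a model: each ``saddle to the auxiliary torus braid'' is replaced by a crossing change that locally turns a non-alternating passage alternating. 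The very same diagram is intended to certify the bounds on $\dalt$, on $g_T$ (via its Turaev surface) and on $\mathcal{A}_s$ simultaneously.

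For the lower bound, I would use that $\upsilon$ is a concordance homomorphism satisfying $|\upsilon(K) - \upsilon(L)| \le g_4(C)$ for any connected smooth cobordism $C$ from $K$ to $L$, and that each crossing change is realised by a genus-one cobordism. Hence for any alternating knot $L$ obtained from $K$ by $\alt(K)$ crossing changes, $|\upsilon(K) - \upsilon(L)| \le \alt(K)$, while $\upsilon(L) = -\sigma(L)/2$ by \cite{OSS2017}. Combining this with the positive-braid identity $g(K) = g_4(K) = \tau(K)$ (Kronheimer--Mrowka), the classical bound $|\sigma(K) - \sigma(L)| \le 2\alt(K)$ under crossing changes, and the known signature formula for positive $3$-braid knots should yield $g(K) + \upsilon(K) \le \alt(K)$.

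The main obstacle is expected to be the upper-bound construction: the set of $g(K) + \upsilon(K)$ crossings to be changed must be chosen coherently across all cases of Murasugi's normal form (the sign of $\ell$, and the parities and sizes of the $p_i$ and $q_i$), and one must verify that the resulting diagram simultaneously realises the upper bound for all four alternating invariants rather than only for $\dalt(K)$.
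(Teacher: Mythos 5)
Your overall sandwich strategy is the right one, and your upper bound (a distinguished positive-braid diagram in which changing $g(K)+\upsilon(K)$ crossings yields an alternating diagram) is essentially the Abe--Kishimoto bound \eqref{upperbounddalt} that the paper relies on. However, there are two genuine gaps. First, the chain of ``standard dominations'' you start from is partly false: the known inequality between the dealternating number and the Turaev genus goes the other way, $g_T(K)\le \dalt(K)$ \cite[Cor.~5.4]{abekishimoto}, and whether $\alt(K)\le g_T(K)$ holds for all knots is an open question (see \cite[Question~3]{lowrance}). Consequently your reduction does not produce the lower bounds $g(K)+\upsilon(K)\le g_T(K)$ and $g(K)+\upsilon(K)\le \mathcal{A}_s(K)$; these must be supplied separately, as the paper does via $\left|\tau(K)+\frac{\sigma(K)}{2}\right|\le g_T(K)$ \cite[Theorem~1.1]{dasbachlowrance} combined with $\sigma(K)=2\upsilon(K)$ for the relevant knots (\Cref{rem:Turaevgenus}), and via $\left|\tau(K)+\upsilon(K)\right|\le \mathcal{A}_s(K)$ \cite[Theorem~18]{friedllivingstonzentner} (\Cref{rem:singular}).

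Second, your argument for the lower bound $g(K)+\upsilon(K)\le\alt(K)$ loses a factor of $2$. The inputs you list are all symmetric cobordism-distance bounds: $|\upsilon(K)-\upsilon(L)|\le \alt(K)$, $|\tau(K)-\tau(L)|\le\alt(K)$ and $|\sigma(K)-\sigma(L)|\le 2\alt(K)$, together with $\tau(L)+\upsilon(L)=0$ for $L$ alternating. Adding these only yields $\tau(K)+\upsilon(K)\le 2\,\alt(K)$, which is not enough. The missing ingredient is the one-sided behaviour of $\tau$ and $\upsilon$ under a single signed crossing change (Livingston \cite[Corollary~3]{livingston04}, packaged as \cite[Theorem~2.1]{abe}): both $\tau$ and $-\upsilon$ move in the same direction by an amount in $[0,1]$, so the combination $\tau+\upsilon$ changes by at most $1$ per crossing change, which is exactly inequality \eqref{eq:lowerboundtauups}. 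With that input, and with the analogous one-sided statements behind the $g_T$ and $\mathcal{A}_s$ lower bounds, your outline matches the paper's proof, which assembles these pieces in \Cref{thm:alt}, \Cref{rem:Turaevgenus} and \Cref{rem:singular}.
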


Here, the \emph{alternation number} $\alt(K)$, \emph{dealternating number} $\dalt(K)$ and \emph{Turaev genus} $g_T(K)$ are different ways of measuring 
how far the knot $K$ is from being alternating. 
The best known among them is certainly the first one: the alternation number $\alt(K)$ of a knot $K$ was first defined by Kawauchi \cite{kawauchi} as the minimal \emph{Gordian distance} of $K$ to the set of alternating knots. In \Cref{sec:alternation}, we will review the precise definition and prove \Cref{cor:alt}.
The invariant $\mathcal{A}_s(K)$ introduced by Friedl, Livingston and Zentner \cite{friedllivingstonzentner} is defined as the minimal number of double point singularities in a generically immersed concordance from a knot $K$ to an alternating knot. Lastly, $g(K)$ denotes the \emph{$3$-genus} of $K$, the minimal genus of a compact, connected, oriented smooth surface in $S^3$ with oriented boundary the knot $K$.\\

Two other corollaries of \Cref{thm:upsilon} for positive $3$-braid knots are the following.

\begin{cor}\label{cor:switches}
Let $K$ be a positive $3$-braid knot.
Then the minimal $r$ such that $K$ is the closure of $a^{p_1}b^{q_1}a^{p_2}b^{q_2}\cdots a^{p_r}b^{q_r}$ for positive integers $p_i,q_i$, $i \in \{1, \dots, r\}$, is $r = g(K)+\upsilon(K)  +1$.
\end{cor}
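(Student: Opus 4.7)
The plan is to reduce the corollary to a direct computation of $\upsilon(K)$ from any positive braid representative, via \Cref{thm:upsilon} and Bennequin's equality for positive braid knots. Bennequin gives $g(K) = (P+Q)/2 - 1$ with $P = \sum_i p_i$ and $Q = \sum_i q_i$, so $P + Q = 2g(K) + 2$ is independent of the positive representative and the statement is equivalent to $r_{\min} = (P + Q)/2 + \upsilon(K)$. The key rewriting identity is $a^{p_i}b^{q_i} = \Delta \cdot b^{p_i - 1}\,a^{-1}\,b^{q_i - 1}$, obtained from $ab = \Delta a^{-1}$ (equivalently, from $aba = bab$). Applying it to each of the $r$ blocks and then moving all extracted $\Delta$'s to the left via $w(a,b)\,\Delta = \Delta\,w(b,a)$ rewrites $\gamma$ as $\Delta^r \cdot W$ for an explicit word $W$; up to cyclic conjugation of the closure, this puts $\gamma$ in the Murasugi-type form $\Delta^{2\ell}\,a^{-p'_1}b^{q'_1}\cdots a^{-p'_t}b^{q'_t}$ covered by \Cref{thm:upsilon}, provided $K$ is not a torus knot.

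Comparing the exponent sum $P + Q$ with the corresponding quantity $6\ell + \sum(q'_i - p'_i)$ for the Murasugi form gives $\sum(p'_i - q'_i) = 6\ell - (P+Q)$, and substituting into \Cref{thm:upsilon} produces
\[
\upsilon(K) = \tfrac{1}{2}\sum(p'_i - q'_i) - 2\ell = \ell - \tfrac{P+Q}{2} = \ell - g(K) - 1,
\]
so $g(K) + \upsilon(K) + 1 = \ell$ and the corollary reduces to $r_{\min} = \ell$. For the upper bound I would invert the construction: starting from the Murasugi form, the dual identity $\Delta \cdot b^{q'}a^{-1} = a^{q'+1}\,b$, together with the centrality of $\Delta^2$ and cyclic rotation of the closure, lets me pair each $a^{-1}$ with a single $\Delta$-factor and absorb them into a positive block, yielding a positive representative with exactly $\ell$ blocks. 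For the lower bound, the block-by-block rewriting already shows that $\gamma = \Delta^r \cdot W$ displays $r$ factors of $\Delta$, and I would argue that no additional $\Delta^2$ can be produced in the Murasugi normal form beyond the parity-driven pairing of these, giving $\ell \leq r$.

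The remaining case is $K = T(3,n)$, excluded from \Cref{thm:upsilon}; here I would argue separately. The explicit value of $\upsilon(T(3,n))$ from \Cref{lem:Upsilontorus} allows one to compute $g(K) + \upsilon(K) + 1$ directly, and the standard representative $(ab)^n$ can be converted into one with the predicted number of blocks by iterated use of $(ab)^3 = \Delta^2$ (central) together with the reduction $(ab)^2 = a^3 b$ coming from the braid relation. The step I expect to be most delicate is the lower bound $\ell \leq r$ in the non-torus case, as it requires excluding cyclic rewritings that could reveal additional $\Delta^2$-factors beyond those visible from the block-by-block transformation alone; I would likely address it through a Garside-theoretic argument or an induction on $r$.
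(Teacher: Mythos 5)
Your reduction of the statement to the claim $r_{\min} = \ell$, where $\Delta^{2\ell}$ is the full-twist power in the Murasugi normal form, is correct: combining \Cref{thm:upsilon} with the conjugacy invariance of the writhe and \Cref{eq:sliceBennequin} does give $g(K)+\upsilon(K)+1 = \ell$. This is a genuinely different route from the paper's, but the step you yourself flag as delicate --- the lower bound $\ell \le r$ --- is where the proposal breaks down. Your argument rests on the claim that the block-by-block factorization $a^{p_i}b^{q_i} = \Delta\, b^{p_i-1}a^{-1}b^{q_i-1}$, which exhibits $\gamma = \Delta^r W$, accounts for essentially all the $\Delta$-factors of the Murasugi normal form. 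That claim is false. For $\gamma = a^3b^3a^2b^2$ (so $r=2$, a knot closure) the factorization gives $\gamma = \Delta^2\, a^2b^{-1}a^2\,ba^{-1}b$, yet the Murasugi normal form is $\Delta^{4}a^{-3}ba^{-1}b$ --- twice as many $\Delta$'s as the pairing reveals. In general, by the conversion in \Cref{sec:comparisonnormalforms}, an $r$-block word of Garside normal form \eqref{eq:evenpower} with Garside exponent $0$ has Murasugi exponent $\ell = r$, not $\lceil r/2\rceil$; so the premise ``no additional $\Delta^2$ beyond the parity-driven pairing'' would prove the false bound $\ell \le \lceil r/2\rceil$, and no correct argument can pass through it. What you would actually need is a Garside-theoretic computation of the maximal power of $\Delta$ extractable from an arbitrary positive word with $r$ syllable pairs, which you have not supplied.

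The paper sidesteps this entirely: its lower bound $r \ge g(K)+\upsilon(K)+1$ is not combinatorial but comes from \Cref{lem:upsilonUpperBound}, i.e.\ from a cobordism of genus $\frac{r-1+\varepsilon}{2}$ between any $r$-block positive representative and a connected sum of $r+1$ torus knots of braid index $2$, which forces $\upsilon(K) \le -g(K)+r-1$ uniformly for every positive representative and requires no normal-form uniqueness. Your upper bound also needs repair: pairing ``each $a^{-1}$ with a single $\Delta$-factor'' into ``a positive block'' would produce one block per letter $a^{-1}$, hence $\sum_i p_i'$ blocks, which is generally not $\ell$ (in the example above there are $4$ letters $a^{-1}$ but $\ell = 2$); the correct bookkeeping consumes two letters $a^{-1}$ and two $\Delta$'s per resulting block and still leaves a residual central power of $\Delta^2$ that must be converted using an identity such as \Cref{lem:braideq2} or \Cref{lem:braideq3} --- which is precisely what the paper's explicit representatives in the proofs of \Cref{lem:Upsilon1_Delta2l+1Upper} and \Cref{lem:Upsilon1_Delta2l_upperBound} accomplish. (Also, $(ab)^2 = a^2ba$ is only conjugate to $a^3b$, not equal to it.) In short: the reduction is sound and the upper bound is fixable, but the lower bound as proposed relies on a false premise and is genuinely missing.
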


\begin{cor}\label{cor:conc}
If $K$ and $J$ are concordant knots that are both closures of positive $3$-braids, then
the minimal $r$ from \Cref{cor:switches} is the same for both $K$ and $J$.
\end{cor}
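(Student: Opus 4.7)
The plan is to deduce \Cref{cor:conc} from \Cref{cor:switches} by showing that, for concordant positive $3$-braid knots $K$ and $J$, both summands in the formula $r = g(\cdot) + \upsilon(\cdot) + 1$ are individually equal. That is, I would prove $\upsilon(K) = \upsilon(J)$ and $g(K) = g(J)$ separately, and then simply add.

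For the $\upsilon$ summand, the invariant $\upsilon$ was recalled in the introduction to induce a homomorphism $\CC \to \Z$. In particular, it is a concordance invariant, so $\upsilon(K) = \upsilon(J)$ with nothing further to check.

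For the genus summand, I would first note that the smooth slice genus $g_4$ is a concordance invariant: given a concordance annulus $A \subset S^3 \times [0,1]$ from $K$ to $J$ and a smoothly properly embedded orientable surface in $B^4$ bounded by $J$, gluing along $J$ produces a surface in $B^4$ of the same genus bounded by $K$, so $g_4(K) \leq g_4(J)$ and by symmetry $g_4(K) = g_4(J)$. The key external input is that for a positive braid knot $L$ one has the equality $g_4(L) = g(L)$, which is the content of the slice-Bennequin inequality (equivalently, the Kronheimer--Mrowka / Rudolph theorem extending Milnor's conjecture). Applied to both $K$ and $J$, this gives $g(K) = g_4(K) = g_4(J) = g(J)$.

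Combining the two equalities with \Cref{cor:switches} yields
\[
g(K) + \upsilon(K) + 1 \;=\; g(J) + \upsilon(J) + 1,
\]
which is exactly the statement that the minimal $r$ is the same for $K$ and $J$. The argument is essentially formal once the right theorems are assembled; the only real step to be careful with is invoking a precise reference for $g_4 = g$ on positive braid knots, since genus itself is not in general a concordance invariant and this is the ingredient that rescues the argument here.
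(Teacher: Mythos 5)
Your proposal is correct and is essentially identical to the paper's argument: the paper likewise notes that concordant knots have equal $4$-genus and equal $\upsilon$, and then invokes the slice--Bennequin equality \eqref{eq:sliceBennequin} (giving $g_4=g$ for positive braid closures) together with \Cref{cor:switches2}. Your extra care in flagging that $g$ itself is not a concordance invariant, and that $g_4=g$ is the rescuing ingredient, matches the paper's reasoning exactly.
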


\Cref{prop:normalform} in \Cref{sec:normalform} provides a normal form for $3$-braids, the \emph{Garside normal form}, which is different from Murasugi's normal form mentioned above (cf.~\Cref{def:murasuginormalform}). 
The Garside normal form allows us to read off from a braid word whether it is conjugate to a positive braid word.
In \Cref{sec:homogenization}, we provide formulas for the fractional Dehn twist coefficient for all $3$-braids in Garside normal form; see 
\Cref{cor:FDTC}.\\

\textbf{Proof strategy for \Cref{thm:upsilon}.} 
A crucial property of the invariant $\upsilon$ is that it provides a lower bound on the \emph{$4$-genus} $g_4(K)$ of a knot $K$, the minimal genus of a compact, connected, oriented surface smoothly embedded in the $4$-ball $B^4$ with oriented boundary the knot $K$ in $S^3 = \partial B^4$: we have
\begin{align}\label{eq:Upsilon4genus}
\left|\upsilon(K)\right|\leq g_4(K) 
\end{align}
for any knot $K$ \cite[Theorem 1.11]{OSS2017}.
Our general strategy to find $\upsilon(K)$ for any $3$-braid knot $K$ 
will be to construct a cobordism between $K$ and another knot $J$ for which the value of $\upsilon$ is known.
A \emph{cobordism} between $K$ and $J$ is a 
smoothly and properly embedded oriented surface $C$ in $S^3 \times [0,1]$ with boundary $K \times \{0\} \cup J \times \{1\}$ such that the induced orientation on the boundary of $C$ agrees with the orientation of $K$ and disagrees with the orientation of $J$. 
We have 
\begin{align}\label{eq:upsilonboundintro}
\left\vert\upsilon(K)-\upsilon(J)\right\vert 
\leq g(C),
\end{align}
for any cobordism $C$ between $K$ and $J$, where
$g(C)$ denotes the genus of the cobordism; see inequality \eqref{eq:strategy} in \Cref{sec:methodology}. This provides bounds on $\upsilon(K)$ in terms of $\upsilon(J)$ and $g(C)$.

We will find such cobordisms for example by algebraic modifications of a braid word representing $K$ and by saddle moves corresponding to the addition or deletion of generators from such braid words. We will also repeatedly make use of the trick described in \Cref{ex:introstrategy} in \Cref{sec:methodology} of looking at cobordisms of genus $1$ between $\widehat{\gamma}\# T_{2,2n+1} $ and $\widehat{\gamma b^{2n}}$ for $3$-braid words $\gamma$ and $n \geq 1$.\\

To prove \Cref{thm:upsilon}, we will first determine $\upsilon$ for all positive $3$-braid knots and then generalize our computations to all $3$-braid knots. This extension was somewhat unexpected for the author
since, in contrast, the same method would not work to determine slice-torus invariants \cite{lewark} like the invariant $\tau$ defined by Ozsv\'ath and Szab\'o \cite{Ozsv_th_2003} or Rasmussen's invariant $s$ \cite{Rasmussen} for all $3$-braid knots. We will elaborate on this in \Cref{sec:prooftechnique}.

\begin{rem}
As we will only use properties of the upsilon invariant (see \Cref{sec:upsilonprops}) and not its definition, we can similarly determine any concordance homomorphism $\CC \to \mathbb{Z}$ whose absolute value bounds the $4$-genus of a knot from below and which takes the same value as $\upsilon$ on torus knots of braid index $2$ and $3$.
An example is $-\frac{t}{2}$ for the concordance invariant $t$ constructed by Ballinger \cite{ballinger} from the $E(-1)$ spectral sequence on Khovanov homology. 
The invariant $t$ defines a concordance homomorphism valued in the even integers which satisfies $\left|\frac{t(K)}{2}\right|\leq g_4(K)$ for any knot $K$ \cite[Theorem 1.1]{ballinger}. Moreover, it fulfills $t\left(T_{p,q}\right) = -2\upsilon \left(T_{p,q}\right)$ for the torus knots $T_{p,q}$ for any coprime positive integers $p$ and $q$ \cite[p.~22]{ballinger}. 
The same method we use for the proof of \Cref{thm:upsilon} shows that $t(K) = -2\upsilon(K)$ for any $3$-braid knot $K$.
\end{rem}

\begin{rem}
\Cref{thm:upsilon} and a result of Erle \cite{erle} imply
that $\sigma(K) = 2\upsilon(K)$ for all $3$-braid knots $K$ except when 
$K=\pm T_{3,3\ell+k}$ for odd $\ell >0$ and $k \in \{1,2\}$.
Here $\sigma(K)$ denotes the classical signature of the knot $K$ \cite{trotter}\footnote{We use the standard signature convention that the positive torus knots have negative signatures, \eg $\sigma(T_{3,2}) = -2$.}.
In the exceptional cases, we have $\sigma(K) = 2\upsilon(K)-2$. This observation improves a result by Feller and Krcatovich who showed that $\left \vert \upsilon(K) - \frac{\sigma(K)}{2}\right \vert \leq 2$ for all $3$-braid knots $K$ \cite[Proposition 4.4]{Feller_2017}; see also \Cref{rem:upsilonsignature}.
\end{rem}

\textbf{Organization.}
The remainder of this article is organized as follows. 
In \Cref{sec:prelims}, we will provide the necessary background on (positive) braids and the upsilon invariant before providing a normal form for $3$-braids (\Cref{prop:normalform}) that we call Garside normal form in \Cref{sec:normalform}.
Then in \Cref{sec:Upsilon}, after a more detailed outline of our proof strategy (\Cref{sec:methodology}), we will prove \Cref{thm:upsilon} 
first for positive $3$-braid knots (\Cref{sec:upsilonpos}) and afterwards in the general $3$-braid case (\Cref{sec:upsilongeneral}). We will prove \Cref{cor:switches} and \Cref{cor:conc} 
in \Cref{sec:upsilonpos}.
\Cref{sec:discussion} will provide further context on our results. 
\Cref{sec:alternation} is concerned with the proof of \Cref{cor:alt} (\Cref{sec:altpos}) and the application of our result about the upsilon invariant to alternating distances of general $3$-braid knots (\Cref{sec:altgeneral}). In particular, we determine the alternation number of any $3$-braid knot up to an additive error of at most $1$. 
Finally, in Section \ref{sec:homogenization}, we determine the fractional Dehn twist coefficient for all $3$-braids in Garside normal form.\\

\textbf{Acknowledgments.}
I would like to thank Peter Feller for introducing me to the topic and for all the helpful discussions. Thanks also to Lukas Lewark for lots of useful comments, including during my stay in Regensburg in September 2020, and to Xenia Flamm for her feedback. Finally, I thank the referee for many valuable remarks and improvements.
This project is supported by the Swiss National Science Foundation Grant 181199.

\section{Preliminaries}\label{sec:prelims}

We recall important concepts about knots and braids, and also the necessary properties of the upsilon invariant and the knot invariant $\tau$ coming from Heegaard Floer homology.

\subsection{Knots and braids}\label{sec:braids}

By a fundamental theorem of Alexander \cite{alexander}, every knot in $S^3$ can be represented as the closure of a geometric $n$-braid for some positive integer $n$. 
An $n$-\emph{braid} is an element of the \emph{braid group on $n$ strands}, denoted $B_n$, which is presented by $n-1$ generators $\sigma_1, \dots, \sigma_{n-1}$ and relations
\begin{align*}
\sigma_i \sigma_j = \sigma_j \sigma_i \text{ if } | i-j| \geq 2, \text{ and }  \sigma_i \sigma_{i+1} \sigma_i = \sigma_{i+1} \sigma_{i} \sigma_{i+1} \qquad  \text{\cite{artin_1925}}.
\end{align*}
We call a word in the generators of $B_n$ and their inverses a \emph{braid word}. A braid word defines a diagram for a (geometric) $n$-braid where the generators $\sigma_i$ of the braid group correspond to the geometric $n$-braids given by the braid diagrams in which the $i$-th and $(i+1)$-st strands cross once positively. In the following, we will always identify braid words with the corresponding geometric braids, and we suppress $n$ if the context is clear.

By gluing the top ends of the (oriented) strands of a geometric braid $\gamma \in B_n$ to the corresponding bottom ends, we get a knot (or link) $\widehat{\gamma}$, called the \emph{closure} of $\gamma$. 
If $\gamma$ induces a permutation with only one cycle on the ends of its $n$ strands, then its closure $\widehat{\gamma}$ is a knot and we call it an \emph{$n$-braid knot}. Note that conjugate braids $\gamma_0, \gamma_1 \in B_n$, denoted by $\gamma_0 \sim \gamma_1$, have isotopic closures $\widehat{\gamma_0}=\widehat{\gamma_1}$.
For a more detailed account on braids, we refer the reader to \cite{birmanbrendle}.

A \textit{positive braid} is an element of the braid group $B_n$ for some $n$ that can be written as a positive braid word $\sigma_{s_1} \sigma_{s_2} \cdots \sigma_{s_l}$ with $s_i \in \{1, \dots, n-1\}$.
A knot is called a \textit{positive braid knot} if it can be represented as the closure of a positive braid. 
The set of positive braid knots contains the sets of (positive) torus knots and algebraic knots, while itself being a subset of the set of positive knots or, more generally, the frequently studied set of (strongly) quasipositive knots.\\

Let $\operatorname{wr}(\gamma)$ denote the \emph{writhe} of a braid word $\gamma \in B_n$, \ie the exponent sum of the word $\gamma$. 
If $\gamma$ is a positive $n$-braid such that $K =\widehat{\gamma}$ is a knot, then, by work of Bennequin \cite{bennequin} and Rudolph \cite{rudolphslice} --- the latter building on Kronheimer and Mrowka's proof of the local Thom conjecture \cite{kronheimermrowka} --- we have
\begin{align}\label{eq:sliceBennequin}
g_4(K)=g(K)=\frac{\operatorname{wr}(\gamma)-n+1}{2}.
\end{align}

\subsection{The concordance invariants \texorpdfstring{$\tau$}{tau} and \texorpdfstring{$\Upsilon$}{Upsilon}}\label{sec:upsilonprops} 

In \cite{Ozsv_th_2003}, Ozsv\'ath and Szab\'o constructed the knot invariant $\tau$ via the knot filtration on the Heegaard Floer chain complex of $S^3$; the latter was also defined independently by Rasmussen \cite{rasmussen2003floer}.
 The invariant $\tau$ induces a group homomorphism $\CC \to \Z$ from the (smooth) knot concordance group $\CC $ to the group of integers $\Z$ and gives a lower bound on the $4$-ball genus $g_4(K)$: we have $\left|\tau(K)\right|\leq g_4(K)$ for any knot $K$. For the torus knots $T_{p,q}$, where $p$ and $q$ are coprime positive integers, the invariant $\tau$ recovers the $3$-genus \cite[Corollary 1.7]{Ozsv_th_2003}, namely we have 
\begin{align}\label{eq:tauTorus}
\tau \left(T_{p,q}\right) =g\left(T_{p,q}\right)=\frac{(p-1)(q-1)}{2}.
\end{align}
Moreover, it follows from \cite[Theorem 4 and Corollary 7]{livingston04} together with \Cref{eq:sliceBennequin} above that, for any knot $K$ that is the closure of a positive $n$-braid $\gamma$, we have
\begin{align}\label{eq:tauGamma}
\tau(K)=\frac{\operatorname{wr}(\gamma)-n+1}{2}=g_4(K)=g(K).
\end{align}

The invariant $\Upsilon$ was defined by  Ozsv\'ath, Stipsicz and Szab\'o in \cite{OSS2017}. 
We will not recall the definition of $\Upsilon$ via the knot Floer complex $CFK^\infty(K)$ since the properties of $\Upsilon$ mentioned below will be enough for our later computations and we will not explicitly use the Heegaard Floer theory behind it. 
For an overview on the properties of $\Upsilon$, see the original article \cite{OSS2017} or Livingston's notes on $\Upsilon$ \cite{livingstonnotes}; see \cite{homsurvey} for a survey on Heegaard Floer homology and knot concordance.

For every knot $K$, the knot invariant $\Upsilon_K \colon [0,1] \to \R$ 
is a continuous, piecewise linear function with the following properties \cite{OSS2017}:
\begin{align}
&\Upsilon_K(0)=0,\\
&\text{the slope of }\Upsilon_K(t) \text{ at } t = 0 \text{ is given by } -\tau(K),\\
&\Upsilon_{K_1\#K_2}(t) = \Upsilon_{K_1}(t) + \Upsilon_{K_2}(t) \text{ for all } 0 \leq t \leq 1 \text{ and all knots } K_1 \text{ and } K_2, \label{additivity}\\
&\Upsilon_{-K}(t)=-\Upsilon_K(t) \text{ for all } 0 \leq t \leq 1,\label{mirror}\\
&\left|\Upsilon_K(t)\right|\leq g_4(K) t \text{ for all } 0\leq t \leq 1.\label{eq:Upsilon4genus2}
\end{align}
Here, $-K$ is the knot obtained by mirroring $K$ and reversing its orientation. Its concordance class is the inverse of the class of $K$ in the knot concordance group $\CC$.
It follows from \eqref{additivity}-\eqref{eq:Upsilon4genus2} that $\Upsilon$ induces a homomorphism from the concordance group to the group of real-valued piecewise linear functions on the interval $[0,1]$.\\ 

For some classes of knots, the invariant $\Upsilon$ can be explicitly computed in terms of classical knot invariants like the signature and the Alexander polynomial.

\begin{prop}[{\cite[Theorem 1.14]{OSS2017}}]\label{prop:alternating}
We have $\Upsilon_K(t) = \frac{\sigma(K)}{2} t$ for all alternating or quasi-alternating knots $K$ and all $0 \leq t \leq 1$.
\end{prop}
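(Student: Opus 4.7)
The plan is to follow the original approach of Ozsv\'ath, Stipsicz and Szab\'o via the structure of the knot Floer complex. The key input is a structural theorem of Ozsv\'ath--Szab\'o for alternating knots, extended by Manolescu--Ozsv\'ath to quasi-alternating knots: for such a $K$, the hat version $\widehat{HFK}(K)$ is $\sigma$-thin, \ie supported on the single diagonal $m = A - \sigma(K)/2$ in the Alexander--Maslov bigrading, with the rank in bidegree $(A,m)$ equal to the absolute value of the coefficient of $t^A$ in the symmetrized Alexander polynomial $\Delta_K(t)$.

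I would then upgrade this bigraded information to a statement about the full filtered complex $CFK^\infty(K)$. After standard cancellations of pairs of generators differing by Maslov grading one, the thinness of $\widehat{HFK}(K)$ forces $CFK^\infty(K)$ to be filtered chain homotopy equivalent to a ``model complex'' which decomposes as the direct sum of a single staircase complex -- carrying the distinguished $HF^-$ generator at filtration offset $\sigma(K)/2$ -- together with acyclic square summands. The sparse bigrading leaves no room for exotic higher-order filtered differentials, so the model is entirely determined by $\Delta_K$ and $\sigma(K)$. With this model in hand, $\Upsilon_K(t)$ is read off by maximizing the $t$-modified grading on a cycle representing the distinguished class in $HF^-$: the square summands contribute $0$, and the staircase contributes $\frac{\sigma(K)}{2} t$, yielding $\Upsilon_K(t) = \frac{\sigma(K)}{2} t$. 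As a sanity check, the initial slope is $-\tau(K) = \sigma(K)/2$, since $\tau(K) = -\sigma(K)/2$ for (quasi-)alternating knots in the sign convention used in the paper (matching \eqref{eq:tauTorus} at $(p,q) = (2,3)$).

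The hard step is the rigidity upgrade from $\widehat{HFK}$ to $CFK^\infty$: passing from bigraded homological information to the filtered chain homotopy type of the full complex. This is where the quasi-alternating hypothesis is essential -- concentration on a single diagonal eliminates room for nontrivial higher-order filtered maps and forces the model to be built purely from the Alexander polynomial with a signature-dependent shift. Once this rigidity is in hand, the actual computation of $\Upsilon$ on the model is a short linear-algebra exercise.
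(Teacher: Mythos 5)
The paper offers no proof of this statement at all: it is imported verbatim as \cite[Theorem 1.14]{OSS2017}, so there is nothing internal to compare against. Your sketch correctly reconstructs the argument from that source --- thinness of $\widehat{HFK}$ for (quasi-)alternating knots (Ozsv\'ath--Szab\'o, Manolescu--Ozsv\'ath) plus the structural lemma (due to Petkova) that a thin $CFK^\infty$ splits as one staircase, determined by $\tau(K)=-\sigma(K)/2$, plus acyclic squares --- the only quibble being a sign slip in the supporting diagonal, which in the paper's convention (where $\sigma(T_{2,3})=-2$) is $m=A+\sigma(K)/2$ rather than $m=A-\sigma(K)/2$.
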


For positive torus knots, $\Upsilon_K(t)$ is completely determined by a combinatorial formula in terms of their Alexander polynomial \cite[Theorem 1.15]{OSS2017}. For torus knots of braid index $2$ or $3$, the following holds; see \eg \cite{Feller_2016}.
For $\ell \geq 0$, we have 
\begin{align}\label{lem:Upsilontorus2}
\Upsilon_{T_{2,2\ell+1}}(t)=-\tau\left(T_{2,2\ell+1}\right) \cdot t 
=-\ell\cdot t \qquad \text{for all }0\leq t \leq 1.
\end{align}
For $\ell \geq 0$ and $ k \in \{1,2\}$, we have
\begin{align}\label{lem:Upsilontorus}
\Upsilon_{T_{3,3\ell+1}}(1)&=\Upsilon_{T_{3,3\ell+2}}(1)+1=-2\ell,&\\
\Upsilon_{T_{3,3\ell+k}}(t)&=-\tau(T_{3,3\ell+k}) t=-(3\ell+k-1)t & \text{for all } 0\leq t \leq \frac{2}{3} \qquad \text{and}\nonumber\\
\Upsilon_{T_{3,3\ell+k}}(t) &\text{ is linear on } \left[\frac{2}{3},1\right].& \nonumber
\end{align}

\section{The Garside normal form for $3$-braids}\label{sec:normalform} 

In this section, we provide a classification result on the conjugacy classes of $3$-braids; see \Cref{prop:normalform}.
This result is basically due to work of Garside \cite{GARSIDE} 
who gave the first solution to the conjugacy problem for all braid groups $B_n$, $n \geq 3$, in 1965.
\Cref{prop:normalform} might be known to the experts, but since the explicit formulas appear to be missing from the literature,
we will provide them here.
\\

Throughout, we denote the two generators of the braid group $B_3$ by $a\eqdef \sigma_1$ and $b\eqdef \sigma_2$ which are 
subject to the braid relation $aba=bab$. Recall that the braid $\Delta^2=(aba)^2=(ab)^3$ generates the center of $B_3$.

\begin{rem}\label{rem:avsb}
Any $3$-braid is conjugate to the same braid with generators $a$ and $b$ interchanged. More precisely, let $\gamma = a^{p_1}b^{q_1}\cdots a^{p_r}b^{q_r}$ for some $r \geq 1$ and integers $p_i,q_i$, $i \in \{1, \dots, r\},$ be a $3$-braid. Then using $\Delta a = b \Delta$ and $\Delta b = a \Delta$, we have
\begin{align*}
\gamma &= \Delta^{-1} \Delta a^{p_1}b^{q_1}\cdots a^{p_r}b^{q_r} = \Delta^{-1}  b^{p_1}a^{q_1}\cdots b^{p_r}a^{q_r} \Delta \sim  
 b^{p_1}a^{q_1}\cdots b^{p_r}a^{q_r}.
\end{align*}
\end{rem}

In \Cref{prop:normalform}, we will provide a certain standard form for the conjugacy classes of $3$-braids.

\begin{prop}\label{prop:normalform}
Let $\gamma$ be a $3$-braid.
Then $\gamma$ is conjugate to one
of the $3$-braids
\begin{align}
&\Delta^{2\ell}a^p & \text{for }  \ell \in \Z,\, p \geq 0,\tag{A}\label{eq:linkcase} \\
&\Delta^{2\ell}a^pb &\text{for }  \ell \in \Z,\, p \in \{1,2,3\},\tag{B}\label{eq:torusknotcase}\\
&\Delta^{2\ell}a^{p_1}b^{q_1}\cdots a^{p_r}b^{q_r}  & \text{for } \ell \in \Z, \,r\geq 1,\, p_i,q_i\geq 2, \,i \in \{ 1, \dots, r\},\tag{C}\label{eq:evenpower}\\
&\Delta^{2\ell+1}a^{p_1}b^{q_1}\cdots a^{p_{r-1}}b^{q_{r-1}}a^{p_r} & \text{for }  \ell \in \Z,\,r\geq 1, \,p_r \geq 2,\, p_i, q_i\geq 2,\tag{D}\label{eq:oddpower} 
\\&&
i \in \{ 1, \dots, r-1\}.\nonumber
\end{align}
If $\gamma$ is a positive $3$-braid, then $\ell \geq 0$. 
If $\widehat{\gamma}$ is a knot, then only the cases \eqref{eq:torusknotcase}--\eqref{eq:oddpower} can occur and $p$ must be odd in case \eqref{eq:torusknotcase}, at least one of the $p_i$ and one of the $q_i$ must be odd in case \eqref{eq:evenpower}, and at least one of the $p_i$ or $q_i$ must be odd in case \eqref{eq:oddpower}.
\end{prop}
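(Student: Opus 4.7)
The plan is to reduce $\gamma$ to normal form by cyclic conjugation together with the relations $aba = bab = \Delta$ and their consequences $\Delta a = b\Delta$, $\Delta b = a\Delta$. First I would invoke Garside's theory \cite{GARSIDE}, which, using that $\Delta^2$ is central and that each inverse letter can be rewritten as a positive word times $\Delta^{-1}$ (namely $a^{-1} = ba\Delta^{-1}$ and $b^{-1} = ab\Delta^{-1}$), lets me write
\[
\gamma = \Delta^{n}\cdot P
\]
for some $n\in \Z$ and a positive word $P \in \langle a, b\rangle^+$. Replacing $P$ by a cyclic conjugate preserves the conjugacy class of $\gamma$, and by \Cref{rem:avsb} I may also interchange the roles of $a$ and $b$ by conjugating through a $\Delta$.

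Next I put $P$ in alternating form $a^{p_1}b^{q_1}\cdots a^{p_r}b^{q_r}$ with $p_i,q_i\geq 1$ (possibly after a cyclic shift). The key observation is that any interior exponent equal to $1$ can be traded for an extra $\Delta$: if, for instance, $p_{i+1} = 1$ then
\[
b^{q_i}\cdot a\cdot b^{q_{i+1}} = b^{q_i-1}(bab)b^{q_{i+1}-1} = b^{q_i-1}\Delta b^{q_{i+1}-1} = b^{q_i-1}a^{q_{i+1}-1}\Delta,
\]
using $\Delta b^k = a^k\Delta$. Sliding the freshly created $\Delta$ leftward past each remaining syllable swaps $a \leftrightarrow b$ there but introduces no new inverses, and once it reaches the prefactor it is absorbed, raising $n$ by $1$. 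A symmetric move handles $q_i = 1$, and cyclic conjugation transports any end-of-word exponent-$1$ syllable into the interior. Each extraction strictly decreases the total exponent sum of $P$, so the procedure terminates in finitely many steps.

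At termination the surviving word $P'$ falls into one of the following cases. If $P'$ has at most one syllable, we are in case \eqref{eq:linkcase}. If $P'$ has exactly two syllables, one of which has been driven down to exponent $1$ --- say $P' = a^p b$ after applying \Cref{rem:avsb} if necessary --- then for $p \in \{1,2,3\}$ we are in case \eqref{eq:torusknotcase}, while for $p \geq 4$ the identity $a^p b \sim \Delta\cdot a^{p-2}$ (obtained by cyclically conjugating $\Delta a^{p-2} = ab\cdot a^{p-1}$) moves us into case \eqref{eq:oddpower} with $r = 1$ and $p_r = p-2 \geq 2$. Otherwise every exponent of $P'$ is $\geq 2$, and the parity of the resulting $n$ (writing $n = 2\ell$ or $n = 2\ell+1$) determines whether the form is \eqref{eq:evenpower} or \eqref{eq:oddpower}. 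For positive $\gamma$, no $\Delta^{-2}$'s are ever introduced in the first step and only positive $\Delta$'s are produced by the reduction, so $\ell \geq 0$.

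Finally, the knot-versus-link dichotomy is decided by the image of $\gamma$ in $S_3$ under $a \mapsto (1\,2)$, $b \mapsto (2\,3)$: the closure $\widehat{\gamma}$ is a knot if and only if this image is a $3$-cycle, which at once rules out case \eqref{eq:linkcase} entirely and imposes the stated parity conditions on the $p_i, q_i$ in the remaining cases. I expect the main obstacle to be the bookkeeping --- tracking how each cyclic conjugation, $\Delta$-extraction, and $a \leftrightarrow b$ swap alters the structure of the remaining word --- so that the final parity of $n$ really matches the end-syllable type of the alternating word distinguishing \eqref{eq:evenpower} from \eqref{eq:oddpower}, and so that the two-syllable residue ends up in \eqref{eq:torusknotcase} rather than \eqref{eq:oddpower} precisely when $p \leq 3$.
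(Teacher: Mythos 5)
Your proposal is correct and follows essentially the same route as the paper: both reduce $\gamma$ via Garside to $\Delta^{n}\cdot(\text{positive word})$ and then extract copies of $\Delta$ from exponent-$1$ syllables until every surviving exponent is at least $2$ (the paper phrases this as taking $n$ maximal with $\gamma\sim\Delta^{n}\alpha$ for $\alpha$ positive, rather than running your explicit terminating rewriting procedure, but the mechanism is the same). The one step you defer --- matching the parity of $n$ to the syllable count of the residual word --- is exactly what the paper's relations \eqref{eq:posbraidcasesobs} supply, namely $\Delta^{2\ell+1}a^{p_1}b^{q_1}\cdots a^{p_r}b^{q_r}\sim\Delta^{2\ell+1}a^{p_1+q_r}b^{q_1}\cdots b^{q_{r-1}}a^{p_r}$ and its even-power counterpart by cyclic conjugation; since the merged exponents remain $\geq 2$, this closes your remaining bookkeeping concern routinely.
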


While we will never use it in this article, we note --- without proof --- the following uniqueness result related to \Cref{prop:normalform}.

\begin{rem}
Up to cyclic permutation of the powers $p_1,  q_1, \dots, p_r, q_r$ in \eqref{eq:evenpower} and $p_1,q_1, \dots, p_{r-1}, q_{r-1}, p_r$ 
in \eqref{eq:oddpower}, respectively, each $3$-braid is conjugate to exactly one of the $3$-braids listed in \Cref{prop:normalform}. 
This follows from Garside's work \cite{GARSIDE}.  
In his notation, each of the $3$-braids listed in \eqref{eq:linkcase}--\eqref{eq:oddpower} in \Cref{prop:normalform} is the standard form of a certain element in the (so-called) summit set of $\gamma$. For $3$-braids of the form \eqref{eq:evenpower} or \eqref{eq:oddpower}, the summit set consists of those $3$-braids obtained by cyclic permutation of the powers $p_1,  q_1, \dots, p_r, q_r$ in \eqref{eq:evenpower} and $p_1,q_1, \dots, p_{r-1}, q_{r-1}, p_r$ in \eqref{eq:oddpower}, respectively. 
\end{rem}

\begin{defn}\label{def:garsidenormalform}
We call a braid word of the form in \eqref{eq:linkcase}--\eqref{eq:oddpower} a \emph{$3$-braid in Garside normal form}.
\end{defn}

\begin{rem}
The advantage of the Garside normal form over Murasugi's normal form for $3$-braids used later in \Cref{sec:upsilongeneral} (see \Cref{def:murasuginormalform}) is that positive $3$-braids are easier to detect in this normal form: if $\gamma$ is a positive $3$-braid, then $\gamma$ is conjugate to one of the braids in \eqref{eq:linkcase}--\eqref{eq:oddpower} with $\ell \geq 0$. 
Since Garside's solution to the conjugacy problem works for any $n$-braid with $n \geq 3$,
one might hope to generalize an explicit standard form as in \Cref{prop:normalform} to $n$-braids for any $n \geq 3$. 
\end{rem}

\begin{rem}\label{rem:toruscasenormalform}
For odd $p$, case \eqref{eq:torusknotcase} of \Cref{prop:normalform} covers the torus knots of braid index $3$. More precisely, if $\gamma \sim \Delta^{2\ell}ab = (ab)^{3\ell+1}$, then its closure is $\widehat{\gamma}= T_{3,3\ell+1}$ for $\ell \geq 0$ and $\widehat{\gamma}= -T_{3,3(-\ell-1)+2}$ for $\ell < 0$, and if $\gamma \sim \Delta^{2\ell}a^3 b \sim (ab)^{3\ell+2}$, then $\widehat{\gamma}= T_{3,3\ell+2}$ for $\ell \geq 0$ and $\widehat{\gamma}= -T_{3,3(-\ell-1)+1}$ for $\ell < 0$.
\end{rem}

\begin{proof}[Proof of \Cref{prop:normalform}]
The proof will follow from the following claim.
\begin{claim} \label{claim:posbraids}
Let $\gamma$ be a positive $3$-braid.
Then $\gamma$ is conjugate to one of the $3$-braids in \eqref{eq:linkcase}--\eqref{eq:oddpower} with $\ell \geq 0$.
\end{claim}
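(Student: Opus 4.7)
I would argue by strong induction on the word length $n$ (exponent sum) of a positive braid word for $\gamma$. The two conjugacy-preserving moves available---cyclic rotation and the braid relation $aba=bab$---both preserve positivity and total length, so the only way the induction decreases $n$ is by identifying a subword equal to $\Delta=aba$ and pulling it out, reducing the leftover length by $3$. Base cases $n\leq 3$ are a finite check: every positive word of length at most $3$ is conjugate to $e$, a pure power $a^p$ with $p\in\{1,2,3\}$, $ab\sim a^2 b$, or $\Delta=aba\sim a^2 b$, each of which lies in one of \eqref{eq:linkcase}, \eqref{eq:torusknotcase} with $\ell=0$.

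For the induction step, I would first cyclically rewrite $\gamma$ in the form $a^{p_1}b^{q_1}\cdots a^{p_r}b^{q_r}$ with $p_i,q_i\geq 1$ (taking $r=0$ when $\gamma$ is a pure power of a single generator, which by \Cref{rem:avsb} falls into case \eqref{eq:linkcase} with $\ell=0$). If every exponent is at least $2$, we are already in case \eqref{eq:evenpower} with $\ell=0$. Otherwise some $p_i$ or $q_i$ equals $1$; after a further cyclic rotation we may assume $q_1=1$, and then the subword $a^{p_1}ba^{p_2}$ reads as $a^{p_1-1}\Delta a^{p_2-1}$, exhibiting a $\Delta$-factor. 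Writing $\gamma=u\Delta v$ and using the cyclic-rotation identity $u\Delta v\sim\Delta vu$, we obtain $\gamma\sim\Delta\cdot\gamma'$ with $\gamma'$ a positive braid of length $n-3$. The induction hypothesis puts $\gamma'$ into one of the four listed forms with $\ell'\geq 0$, and I would then repackage $\Delta\cdot\gamma'$ using the swap relations $\Delta a=b\Delta$ and $\Delta b=a\Delta$ together with the centrality of $\Delta^2$ to land in one of \eqref{eq:linkcase}--\eqref{eq:oddpower}.

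The main obstacle is this final repackaging, a parity-of-$\Delta$-power bookkeeping exercise: cases \eqref{eq:linkcase} and \eqref{eq:evenpower} collect even $\Delta$-powers, whereas case \eqref{eq:torusknotcase} encodes odd $\Delta$-powers implicitly via the cyclic-rotation identifications $\Delta\sim a^2 b$ and $\Delta\cdot a\sim a^3 b$, and case \eqref{eq:oddpower} carries odd $\Delta$-powers explicitly. One must check that for each of the four possible inductive outputs for $\gamma'$, the product $\Delta\cdot\gamma'$ matches one of the listed forms: for instance, if $\gamma'\sim\Delta^{2\ell'}a^p$, then the product lies in case \eqref{eq:torusknotcase} for $p\in\{0,1\}$ and in case \eqref{eq:oddpower} with $r=1$, $p_1=p$ for $p\geq 2$, while analogous case splits handle the other three branches. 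Because every extracted $\Delta$ comes from a positive subword, no negative $\Delta$-power is ever produced, giving $\ell\geq 0$ throughout, as required.
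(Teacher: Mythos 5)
Your overall strategy --- locate a $\Delta$-subword, pull it to the front by cyclic rotation, and reduce to a shorter positive word plus parity bookkeeping with the swap relations --- is the same as the paper's. However, your inductive step has a genuine gap. From $\gamma\sim\Delta\gamma'$ and the induction hypothesis $\gamma'\sim\delta$ with $\delta$ in normal form, you cannot conclude that $\gamma\sim\Delta\delta$: the map $x\mapsto\Delta x$ does not descend to conjugacy classes, because $\Delta$ is not central in $B_3$ (only $\Delta^2$ is). Concretely, if $\gamma'=c\delta c^{-1}$, then $\Delta\gamma'=\Delta c\delta c^{-1}\sim\Delta\,\bar c^{-1}c\,\delta$, where $\bar c$ denotes $c$ with $a$ and $b$ interchanged (using $c\Delta=\Delta\bar c$); this equals $\Delta\delta$ only when $\bar c=c$. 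Even if every conjugation in the normalization of $\gamma'$ is a cyclic rotation $uv\mapsto vu$, at the level of the full braid one has $\Delta(vu)\sim\Delta(\bar u v)\neq\Delta(uv)$ in general, so the $\Delta$-prefix scrambles the letters of the rotated segment and the induction hypothesis cannot simply be "repackaged."

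The paper avoids this by not recursing: it takes $n\geq 1$ \emph{maximal} such that $\gamma\sim\Delta^n\alpha$ with $\alpha$ positive, observes that maximality forces $\alpha$ (up to conjugation of the whole word $\Delta^n\alpha$, and up to \Cref{rem:avsb}) into one of four explicit $\Delta$-free shapes listed in \eqref{eq:gammaprcases}, and then finishes with a direct case analysis on the parity of $n$ using the identities \eqref{eq:posbraidcasesobs}, all of which are conjugacies of the full words including their $\Delta$-powers. To repair your argument you would either need to strengthen the induction hypothesis so that it also controls the conjugating element (e.g.\ records an explicit normalization by cyclic rotations of the full word $\Delta^n w$, tracking the generator swap each time a letter passes the odd $\Delta$-prefix), or simply replace the one-$\Delta$-at-a-time recursion by the maximal-power extraction as in the paper. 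Your base cases and the final parity discussion (e.g.\ $\Delta^{2\ell'+1}\sim\Delta^{2\ell'}a^2b$ and $\Delta^{2\ell'+1}a\sim\Delta^{2\ell'}a^3b$ landing in \eqref{eq:torusknotcase}, and $\Delta^{2\ell'+1}a^p$ with $p\geq 2$ landing in \eqref{eq:oddpower}) are consistent with the paper's \eqref{eq:posbraidcasesobs}, so the only missing ingredient is this conjugacy bookkeeping.
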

We first deduce \Cref{prop:normalform} from this claim.
To that end, let $\gamma$ be any $3$-braid. 
If $\gamma$ is a positive braid, we are done by \Cref{claim:posbraids}. If not, then $\gamma$ can be written in the form  $\gamma = \Delta^m \alpha$ where $m$ is a negative integer and $\alpha$ a positive $3$-braid \cite[Theorem 5]{GARSIDE}.
In fact, inserting $\Delta^{-1}\Delta$ if $m$ is odd, we can assume $\gamma$ to be of the form $\Delta^{-2n} \alpha$ for some $n \geq 1$ and a positive $3$-braid $\alpha$. The proposition then easily follows using the claim for $\alpha$. It remains to prove \Cref{claim:posbraids}.

\begin{claimproof}{Proof of \Cref{claim:posbraids}}
A positive $3$-braid $\gamma$ has the form $\gamma = a^{P_1}b^{Q_1}\cdots a^{P_R}b^{Q_R}$ for integers $R \geq 1$, $P_i,Q_i \geq 0$, $i \in \{1, \dots, R\}$. 
If all the $P_i$ or all the $Q_i$ are $0$,
then (possibly using \Cref{rem:avsb}) $\gamma$ is conjugate to $a^p$ for some $p \geq 0$ and we are in case \eqref{eq:linkcase} for $\ell = 0$. 
Possibly after conjugation and reduction of $R$, we can thus assume that all of the integers $P_i, Q_i$ are non-zero. 
If $P_1, Q_1 \geq 2$ applies for all $i \in \{1, \dots, R\}$, then $\gamma$ is of the form in \eqref{eq:evenpower} for $\ell = 0$. If $R=1$, \ie  $\gamma = a^{P_1}b^{Q_1}$ for integers $P_1, Q_1 \geq 1$, and $P_1 = 1$ or $Q_1 = 1$, then (possibly using \Cref{rem:avsb}) $\gamma$ is conjugate to a braid of the form in \eqref{eq:torusknotcase}.

It remains to consider the case where $R \geq 2$ and at least one of the $P_i$ or $Q_i$ is $1$. 
In that case --- if necessary after conjugation --- $\gamma$ contains $\Delta = aba = bab$ as a subword
and is thus conjugate to $\Delta \alpha$ for some positive $3$-braid $\alpha$. Now, let $n \geq 1$ be maximal 
with the property that $\gamma$ is conjugate to
$\Delta^{n} \alpha$ for some positive $3$-braid $\alpha$. Then, possibly after conjugation of $\gamma$, the braid word $\alpha$ must be one of the following:
\begin{align}\label{eq:gammaprcases}
&a^{p} &\text{for } p\geq 0,\nonumber\\
&a^{p}b & \text{for } p\geq 1,\nonumber\\
&a^{p_1}b^{q_1}\cdots a^{p_r}b^{q_r}& \text{for } r\geq 1,\,p_i,q_i \geq 2,\,i \in \{1, \dots, r\},\\
&a^{p_1}b^{q_1}\cdots a^{p_{r-1}}b^{q_{r-1}}a^{p_r} &\text{for } r\geq 1,\,p_r\geq 2,\, p_i,q_i \geq 2,\,
i \in \{1, \dots, r-1\}.\nonumber
\end{align}
Indeed, using \Cref{rem:avsb}, up to conjugation these are the only possible words such that $\Delta^{n} \alpha$ does not contain any additional $\Delta$ as a subword. Note that $\alpha$ can be the empty word, which is covered by the first case in \eqref{eq:gammaprcases} for $p = 0$. Further, note that 
\begin{align}\label{eq:posbraidcasesobs}
\begin{split}
&\Delta^{2\ell }a^pb \sim \Delta^{2\ell +1} a^{p-2}, \qquad \Delta^{2\ell +1} a \sim \Delta^{2\ell }a^3b, \qquad \Delta^{2\ell +1} a^{p}b \sim \Delta^{2\ell +1} a^{p+1},\\
&\Delta^{2\ell +1} a^{p_1}b^{q_1}\cdots a^{p_r}b^{q_r} \sim \Delta^{2\ell +1} a^{p_1+q_r} b^{q_1}a^{p_2} \cdots b^{q_{r-1}}a^{p_r} \qquad \text{and}\\
&\Delta^{2\ell } a^{p_1}b^{q_1}\cdots a^{p_{r-1}}b^{q_{r-1}}a^{p_r} \sim \Delta^{2\ell } a^{p_1+p_r}b^{q_1}a^{p_2} \cdots a^{p_{r-1}}b^{p_{r-1}}
\end{split}
\end{align}
for any $\ell  \geq 0$, $p \geq 1$, $p_i, q_i \geq 2$, $i \in \{1, \dots, r\}$. 
It follows from a case by case analysis of the cases in \eqref{eq:gammaprcases}, using \eqref{eq:posbraidcasesobs} and taking the parity of $n$ into account, that any positive $3$-braid is conjugate to one of the $3$-braids in \eqref{eq:linkcase}--\eqref{eq:oddpower} with $\ell \geq 0$. 
\end{claimproof}
This concludes the proof of \Cref{prop:normalform}.
\end{proof}

\section{The upsilon invariant of $3$-braid knots}\label{sec:Upsilon}

In this section, we prove \Cref{thm:upsilon}.
Along the way, we compute the invariant $\upsilon$ for positive $3$-braid knots in Garside normal form (\Cref{prop:posupsilon}) and prove \Cref{cor:switches} and \Cref{cor:conc}.

\subsection{Methodology}\label{sec:methodology}

We first recall inequality \eqref{eq:upsilonboundintro} from the introduction --- which will be repeatedly used in \Cref{sec:Upsilon} --- in more generality.\\

The \emph{cobordism distance} $d(K,J)$ between two knots $K$ and $J$ is defined as the $4$-genus $g_4(K \# -J)$ of the connected sum of $K$ and the inverse of $J$. 
Equivalently, the cobordism distance $d(K,J)$ could be defined as the minimal genus of a smoothly and properly embedded oriented surface $C$ in $S^3 \times [0,1]$ with boundary $K \times \{0\} \cup J \times \{1\}$ such that the induced orientation on the boundary of $C$ agrees with the orientation of $K$ and disagrees with the orientation of $J$.
Suppose the genus of a cobordism $C$ between two knots $K$ and $J$ is $g(C)$. We then have $d(K,J) \leq g(C)$, so by the properties
\eqref{additivity}-\eqref{eq:Upsilon4genus2} of $\Upsilon$ from \Cref{sec:upsilonprops}
we get 
\begin{align}\label{eq:strategy}
\left\vert\Upsilon_K(t)-\Upsilon_{J}(t)\right\vert = 
\left\vert\Upsilon_{K \# -J}(t)\right\vert
\leq g_4(K \# -T)  t =d(K,T)t \leq g(C)  t
\end{align}
for all $0 \leq t \leq 1$.
This provides bounds on $\Upsilon_K(t)$ in terms of $\Upsilon_J(t)$ and $g(C)$.\\ 

We now give an example for the cobordisms we will use later on.

\begin{ex}\label{ex:introstrategy}
Among other things, we will frequently use the following trick the author first saw in \cite[Example 4.5]{Feller_2017}. 
Let $\gamma$ be a $3$-braid such that $K = \widehat{\gamma}$ is a knot. Consider the $3$-braid $\alpha\eqdef \gamma  b^{2n}$ for some $n \geq 1$. Then $\widehat{\alpha}$ is also a knot and there is a cobordism between $\widehat{\alpha}$ and the connected sum $ K\#T_{2,2n+1} $ of genus $1$. This cobordism can be realized by two saddle moves ($1$-handle attachments) of the form shown in \Cref{fig:saddlemoveintro}, performed in the two circled regions of \Cref{fig:cobintro}. 
 One of them is used to add a generator $b$ to the braid $\alpha$ to obtain the braid word $\gamma b^{2n+1} $ and the other is used to  transform the closure of this new braid word into a connected sum of $K$ and $T_{2,2n+1}$. Recall that our braid diagrams are oriented from bottom to top.

\captionsetup[subfigure]{labelfont=normalfont, labelformat = simple}
\begin{figure}[htbp]
     \centering
     \begin{subfigure}[t]{1\textwidth}
         \centering
         \begin{psfrags}
  \psfrag{a}{\small{$\gamma$}}
  \psfrag{b}{\small{$2n$}}
  \psfrag{c}{\small{$\widehat{\alpha}$}}
  \psfrag{d}{\begin{tabular}{@{}l@{}}
\small{$2n$}  \\ \small{$+1$}\end{tabular}}
  \psfrag{e}{\small{$\widehat{\gamma}\#T_{2,2n+1} $}}
\psfrag{f}{\begin{tabular}{@{}l@{}}
\small{$2$ saddle} \\\small{moves}\end{tabular}}
  \includegraphics[width=0.6 \textwidth]{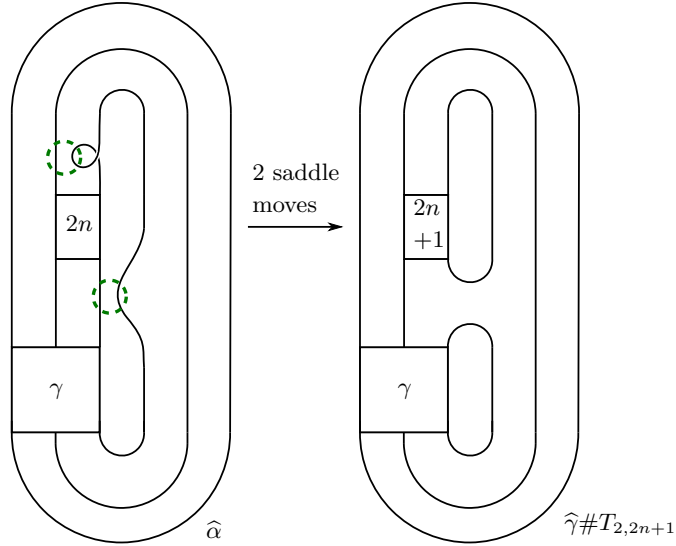}
\end{psfrags}
         \caption{A schematic of a cobordism between the knots $\widehat{\alpha}$ and $ \widehat{\gamma}\#T_{2,2n+1} $
        realized by two saddle moves.}\label{fig:cobintro}
     \end{subfigure}
          \begin{subfigure}[t]{1\textwidth}
         \centering
         \includegraphics[width=0.25 \textwidth]{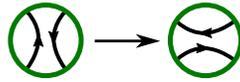}
\caption{A saddle move.}\label{fig:saddlemoveintro}
     \end{subfigure}
        \caption{An example illustrating our proof strategy.}\label{fig:exintro}
\end{figure}

Using $\upsilon\left(T_{2,2n+1}\right)=-n$ by \Cref{lem:Upsilontorus2} and that the genus of the cobordism is $1$, by \eqref{eq:strategy} for $t=1$, we have
\begin{align}\label{eq:exintro}
\left\vert  \upsilon\left(\widehat{\alpha}\right) - \upsilon\left( K\#T_{2,2n+1}\right)\right\vert \leq 1 \quad \Longleftrightarrow \quad 
\left\vert \upsilon\left(\widehat{\alpha}\right) - \upsilon\left(K\right) +n\right\vert \leq 1,
\end{align}
which provides the lower bound $\upsilon(K) \geq 
\upsilon\big(\widehat{\alpha}\big) +n -1$ on $\upsilon(K)$.
\end{ex}

\subsection{The upsilon invariant of positive $3$-braid knots}\label{sec:upsilonpos}

In this section, we determine the invariant $\upsilon$ for all positive $3$-braid knots.
\\ 

By \Cref{prop:normalform} and \Cref{rem:toruscasenormalform}, positive $3$-braid knots are either the torus knots $T_{3,3\ell+k}$ for $\ell \geq 0$ and $k \in \{1,2\}$ which have braid representatives of Garside normal form \eqref{eq:torusknotcase}, or closures of positive
$3$-braids of Garside normal form \eqref{eq:evenpower} or \eqref{eq:oddpower} (cf.~\Cref{def:garsidenormalform}).
The following proposition thus proves \Cref{thm:upsilon} for all positive $3$-braid knots.

\begin{prop}\label{prop:posupsilon}
Let $\gamma$ be a positive $3$-braid such that $K=\widehat{\gamma}$ is a knot. Then
\begin{align*}
\upsilon(K)  = \begin{cases}
-2\ell-\dfrac{p-1}{2}& \text{if } \gamma \text{ is conjugate to a braid in } \eqref{eq:torusknotcase}, \\
-\dfrac{\sum\limits_{i=1}^r \left(p_i + q_i\right)}{2} +r -2\ell  & \text{if } \gamma \text{ is conjugate to a braid in } \eqref{eq:evenpower}, \\
-\dfrac{\sum\limits_{i=1}^{r-1} \left(p_i + q_i\right)+p_r}{2} +r -2\ell-\dfrac{3}{2}& \text{if } \gamma \text{ is conjugate to a braid in } \eqref{eq:oddpower}.
\end{cases}
\end{align*}
\end{prop}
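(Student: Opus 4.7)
The plan is to treat the three cases of \Cref{prop:posupsilon} separately. Case \eqref{eq:torusknotcase} is handled directly from known torus knot values, while Cases \eqref{eq:evenpower} and \eqref{eq:oddpower} are treated inductively via the cobordism trick from \Cref{ex:introstrategy}.

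For Case \eqref{eq:torusknotcase}, \Cref{rem:toruscasenormalform} identifies the closure of $\Delta^{2\ell} a^p b$ with odd $p \in \{1, 3\}$ as a torus knot $T_{3, m}$ (up to mirror), so the formula follows by substituting $\ell$ and $p$ into the torus knot values recorded in \eqref{lem:Upsilontorus} and applying the mirror property \eqref{mirror} when $\ell < 0$.

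For Cases \eqref{eq:evenpower} and \eqref{eq:oddpower}, I would induct on the complexity $N(\gamma) = r + \sum_i (p_i + q_i) + |\ell|$, with Case \eqref{eq:torusknotcase} supplying the base cases. The inductive step peels off an even power $b^{2n}$ (or $a^{2n}$ after applying \Cref{rem:avsb}) from the end of $\gamma$, writing $\gamma = \gamma' b^{2n}$ with $\gamma'$ of strictly smaller complexity, and invokes \Cref{ex:introstrategy} to produce a genus-$1$ cobordism between $\widehat{\gamma'} \# T_{2, 2n+1}$ and $\widehat{\gamma}$. Combined with additivity \eqref{additivity} and the value $\upsilon(T_{2, 2n+1}) = -n$ from \eqref{lem:Upsilontorus2}, this yields
\[
\bigl\vert \upsilon(\widehat{\gamma}) - \upsilon(\widehat{\gamma'}) + n \bigr\vert \leq 1,
\]
placing $\upsilon(\widehat{\gamma})$ in a window of three consecutive integers centered at the value predicted by the formula (namely the inductively known $\upsilon(\widehat{\gamma'})$ shifted by $-n$, which by inspection matches \Cref{prop:posupsilon} applied to $\widehat{\gamma}$).

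To pin down the exact value within this window, I would combine the inequality above with a complementary constraint. One approach is a second application of the cobordism trick after cyclically conjugating $\gamma$ so that a different block lies at the end; the resulting window is shifted relative to the first, and the two intersect in a single integer. Alternatively, the lower bound from the cobordism can be paired with the upper bound $\upsilon(\widehat{\gamma}) \leq g(\widehat{\gamma})$ coming from \eqref{eq:Upsilon4genus2} and the slice-Bennequin equality \eqref{eq:tauGamma}, showing that only the predicted integer is consistent with both.

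The main obstacle will be the bookkeeping: ensuring that each peeling operation returns a braid that is still in one of Cases \eqref{eq:evenpower}, \eqref{eq:oddpower}, or \eqref{eq:torusknotcase}, possibly after a conjugation via \Cref{rem:avsb} or the identity $\Delta a = b \Delta$. In particular, when stripping $b^{2n}$ exhausts $b^{q_r}$ entirely the resulting braid ends in $a^{p_r}$ and may shift between Cases \eqref{eq:evenpower} and \eqref{eq:oddpower} as the power of $\Delta$ changes parity; when it reduces $r$ all the way down to $1$ with odd exponents we land in Case \eqref{eq:torusknotcase}. Verifying that the complementary bounds always single out the predicted value across these sub-cases, and that the prediction of \Cref{prop:posupsilon} is self-consistent under each such transition, is where the argument will require the most care.
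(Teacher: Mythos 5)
There is a genuine gap, and it sits exactly where your proposal is vaguest: pinning down the value inside the width-two window. The peeling estimate from \Cref{ex:introstrategy} can never close on its own. If you write $\gamma = \gamma' b^{2n}$ with $2n < q_r$, the formula of \Cref{prop:posupsilon} predicts $\upsilon(\widehat{\gamma}) = \upsilon(\widehat{\gamma'}) - n$, i.e.\ the predicted value sits at the exact \emph{center} of the window $\lvert \upsilon(\widehat{\gamma}) - \upsilon(\widehat{\gamma'}) + n\rvert \leq 1$; if instead $2n = q_r$ and the peel merges two $a$-syllables (dropping $r$ by one), the predicted value sits at the \emph{top} of the window. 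So every window you can produce this way has the form $[v-1,v+1]$ or $[v-2,v]$ around the predicted value $v$: intersecting them over all conjugates still leaves at least the two integers $\{v-1,v\}$, and in particular never yields a sharp \emph{lower} bound. Your fallback of using $\lvert\upsilon(K)\rvert \leq g_4(K)$ via \eqref{eq:Upsilon4genus2} and \eqref{eq:tauGamma} is vacuous here, since for a positive $3$-braid knot the predicted value is $-g(K) + r + \ell - 1$, which is far from $\pm g(K)$ whenever $r+\ell > 1$.

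The paper supplies the two missing ingredients. For the sharp upper bound it is not enough to peel: one must first rewrite $\Delta^{2\ell}\gamma$ using identities such as $(ab)^{3n-1} = a^{2n}b(a^2b^2)^{n-1}a$ (\Cref{lem:braideq2}) so that the word has only $r+\ell$ syllable pairs rather than $r+3\ell$, and only then apply the cobordism of \Cref{lem:upsilonUpperBound} to a connected sum of $T_{2,\cdot}$'s. For the sharp lower bound the paper goes in the opposite direction from your peeling: it \emph{adds} $b^{2r}$ to (a conjugate of) $\gamma$, and then \emph{deletes} generators from the resulting word down to the braid $(ab)^{3(\ell+r)+1}$ --- this is where \Cref{lem:braideq3} enters --- obtaining a cobordism to the torus knot $T_{3,3(\ell+r)+1}$ whose genus exactly matches the drop in $\upsilon$ predicted by \eqref{lem:Upsilontorus}; see \Cref{lem:Upsilon1_Delta2l+1} and \Cref{lem:Upsilon1_Delta2lCase1}. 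Your induction would also need base cases reachable by peeling alone, which case \eqref{eq:torusknotcase} does not supply (a word in case \eqref{eq:evenpower} with all exponents equal to $2$ or $3$ admits no further peel that stays in normal form). So the skeleton (torus-knot input, the genus-$1$ saddle trick, additivity) is right, but the argument as proposed cannot isolate the claimed value of $\upsilon$ in cases \eqref{eq:evenpower} and \eqref{eq:oddpower}.
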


\begin{rem}
In fact, the formulas from \Cref{prop:posupsilon} also give the correct upsilon invariant in terms of the Garside normal form of a $3$-braid representative of a knot $K$ if $K$ is the closure of any $3$-braid in Garside normal form \eqref{eq:evenpower} or \eqref{eq:oddpower}, not necessarily a positive one. This follows from \Cref{thm:upsilon} (proved in the next section) and the observations of \Cref{sec:comparisonnormalforms}.
\end{rem}

Recall that for the torus knots of braid index $3$, we know the invariant $\upsilon$ by \Cref{lem:Upsilontorus}. In the following, we will determine the invariant $\upsilon$ for all knots that are closures of positive $3$-braids of Garside normal form \eqref{eq:evenpower} or \eqref{eq:oddpower}.

We first provide an upper bound on $\Upsilon_K(t)$ for positive $3$-braid knots $K$ and $0 \leq t \leq 1$. The following inequality \eqref{eq:upsilonUpperBound} in \Cref{lem:upsilonUpperBound} could also be shown using the dealternating number and a result of Abe and Kishimoto \cite[Lemma 2.2]{abekishimoto}, whereas the main work for the upper bound on $\upsilon$ for the knots in the second and third case in \Cref{prop:posupsilon} will be to rewrite the braid words representing these knots.
 We use the approach below since it will also give bounds on the minimal cobordism distance between any positive $3$-braid knot and an alternating knot; see \Cref{rem:Ag}.

\begin{lem}\label{lem:upsilonUpperBound}
Let $\gamma = a^{p_1}b^{q_1}\cdots a^{p_r}b^{q_r}$ be a positive $3$-braid, where $r\geq 1$ and $p_i,q_i \geq 1$, $i \in \{1, \dots, r\}$, are integers such that $K=\widehat{\gamma}$ is a knot.
Then 
\begin{align}\label{eq:upsilonUpperBound}
\Upsilon_K(t)  \leq \left(-g(K)+r-1\right) t
\qquad \text{for all }0 \leq t \leq 1.
\end{align}
\end{lem}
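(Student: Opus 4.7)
The plan is to proceed by strong induction on $r$, using the genus-$1$ cobordism from \Cref{ex:introstrategy} to reduce to a braid with one fewer block at each step.

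For the base case $r=1$, we have $\gamma = a^{p_1}b^{q_1}$ with $p_1$ and $q_1$ both odd (forced by the knot condition on the permutation). I would argue that its closure is a $2$-bridge (hence alternating) knot whose signature equals $-(p_1+q_1-2) = -2g(K)$. Since the slice-Bennequin inequality \eqref{eq:tauGamma} combined with \eqref{eq:Upsilon4genus2} already forces $\Upsilon_K(t) \geq -g(K)t$, any upper bound of the form $\Upsilon_K(t) \leq -g(K)t$ automatically becomes an equality; \Cref{prop:alternating} then confirms $\Upsilon_K(t) = \frac{\sigma(K)}{2}t = -g(K)t$, matching the claim with $r-1=0$.

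For the inductive step $r\geq 2$, the main case is when $q_r = 2n$ is even. Writing $\gamma = \gamma_0 b^{2n}$ with $\gamma_0 = a^{p_1}b^{q_1}\cdots a^{p_r}$, a cyclic rotation exhibits $\widehat{\gamma_0}$ as the closure of the positive $3$-braid $a^{p_r+p_1}b^{q_1}a^{p_2}\cdots a^{p_{r-1}}b^{q_{r-1}}$, which is in the lemma's normal form with $r-1$ blocks and has genus $g(K)-n$. Combining the inductive hypothesis, \Cref{ex:introstrategy} together with \eqref{eq:strategy}, the additivity of $\Upsilon$, and $\Upsilon_{T_{2,2n+1}}(t) = -nt$ from \eqref{lem:Upsilontorus2}, one computes
\begin{align*}
\Upsilon_K(t) &\leq \Upsilon_{\widehat{\gamma_0}}(t) + \Upsilon_{T_{2,2n+1}}(t) + t \\
&\leq \bigl(-(g(K)-n)+r-2\bigr)t - nt + t = (-g(K)+r-1)t.
\end{align*}
If instead some other $q_j$ is even, a cyclic permutation of $\gamma$ moves it into the last position; if only some $p_i$ is even, I would first apply \Cref{rem:avsb} to swap $a\leftrightarrow b$ before cyclically rotating. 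Both tricks reduce to the main case above.

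The remaining subcase is when all $p_i$ and $q_j$ are odd. If $\gamma = (ab)^r$, then $\widehat{\gamma}$ is the torus knot $T_{3,r}$ and the bound can be verified directly from the explicit formulas in \Cref{lem:Upsilontorus}. Otherwise, some exponent is $\geq 3$, and I would apply the braid relation $aba=bab$ to a sub-word of $\gamma$ (possibly after cyclic rotation so that a $bab$ or $aba$ sub-pattern becomes available): for example, replacing a sub-pattern $a \cdot b \cdot a$ sitting between odd-power blocks produces a conjugate positive braid in which at least one exponent has become even, reducing back to the main case. The main obstacle in this plan is precisely this parity-based case analysis—especially the all-odd subcase—together with the base-case fact that $\widehat{a^{p_1}b^{q_1}}$ is $2$-bridge with maximally negative signature $-(p_1+q_1-2)$.
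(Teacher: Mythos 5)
Your base case and your main inductive step are sound (modulo a small slip: the closure of $a^{p_1}b^{q_1}$ is the connected sum $T_{2,p_1}\#T_{2,q_1}$, which is alternating but not in general $2$-bridge; the signature computation and the appeal to \Cref{prop:alternating} still go through). The arithmetic in the even-$q_r$ case is correct: peeling off $b^{2n}$ costs genus $1$, gains $\Upsilon_{T_{2,2n+1}}(t)=-nt$, drops the genus by $n$ and the syllable count by $1$, and these exactly balance. However, the all-odd subcase has a genuine gap, and it is not merely an "obstacle" that more care would dissolve. First, if every exponent is odd \emph{and} at least $3$ (e.g.\ $\gamma=a^3b^3a^3b^3$, whose closure is a knot since its permutation agrees with that of $(ab)^2$), then $\gamma$ contains no subword $aba$ or $bab$ even after cyclic rotation, so the braid relation cannot be applied at all and your reduction never starts. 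Second, even when some exponent equals $1$ so that the relation does apply, it typically \emph{increases} the syllable count: $a^{p}b\,a^{q}\cdots \mapsto a^{p-1}bab\,a^{q-1}\cdots$ turns two $a$-syllables and one $b$-syllable into a word with more alternations, so "reducing back to the main case" only yields the weaker bound $(-g(K)+r)t$ or worse. Since the genus is fixed under conjugation and braid relations, any increase in $r$ strictly weakens the conclusion, and your induction does not close.

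The paper avoids this parity analysis entirely by a single non-inductive cobordism: it takes $K$ directly to the connected sum $J_{\varepsilon}=T_{2,\sum_i p_i+\varepsilon_p}\#\,T_{2,q_1+\varepsilon_1}\#\cdots\#\,T_{2,q_r+\varepsilon_r}$, where the correction terms $\varepsilon_p,\varepsilon_i\in\{0,1\}$ are chosen to make every index odd. Fixing the parities costs $\varepsilon=\varepsilon_p+\sum_i\varepsilon_i$ extra saddle moves, hence extra genus $\varepsilon/2$, but each added generator also lowers $\Upsilon_{J_\varepsilon}(t)$ by $t/2$, so the two contributions cancel and the bound $\Upsilon_K(t)\leq(-g(K)+r-1)t$ comes out independent of the parities. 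If you want to keep your inductive framing, you would need the analogous device at each step: when $q_r$ is odd, add one generator $b$ by a saddle move before splitting off the $T_{2,\cdot}$ factor, and check that the extra half-unit of genus is absorbed by the correspondingly larger torus-knot summand. As written, though, the proposal does not prove the lemma for braids such as $a^3b^3a^3b^3$.
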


\begin{proof}
We claim that there is a cobordism $C$ of genus
\begin{align}\label{eq:genusgeps}
 g(C) = \frac{r-1+\varepsilon}{2}
\end{align}
between $K$ and the connected sum
\begin{align*}
J_{\varepsilon} = T_{2,\sum\limits_{i=1}^r p_i +\varepsilon_p} \# \,T_{2,q_1+\varepsilon_1} \#\, T_{2,q_2+\varepsilon_2} \# \,\dots \#\, T_{2,q_r+\varepsilon_r},
\end{align*}
where $\varepsilon_1,\dots,\varepsilon_r, \varepsilon_p \in \{0,1\}$ are chosen such that $J_{\varepsilon}$ is a connected sum of torus knots (rather than links), \ie such that $\sum_{i=1}^r p_i +\varepsilon_p$, $q_1+\varepsilon_1$, $q_2+\varepsilon_2,\dots,q_r+\varepsilon_r$ are all odd, and $\varepsilon \coloneqq \varepsilon_p + \sum_{i=1}^r \varepsilon_i$. This cobordism $C$ can be realized by $r-1+\varepsilon$ saddle moves as follows. 
Following the schematic in \Cref{fig:schematicCob}, we add $\varepsilon$ generators $b$ by $\varepsilon$ saddle moves and additionally perform $r-1$ saddle moves of the form shown in \Cref{fig:exintro}(b) in the green circled regions of \Cref{fig:schematicCob}. 
In \Cref{fig:schematicCob}, a box on the left labeled $p_i$ or $q_i$ stands for the positive braid $a^{p_i}$ or $b^{q_i}$, respectively.
The Euler characteristic of the cobordism $C$ is $\chi(C) = -r+1-\varepsilon$. Since $C$ is connected and --- as $J_\varepsilon$ and $K$ are knots --- has two boundary components, the genus of $C$ is $g(C) = \frac{-\chi(C)}{2} = \frac{r-1+\varepsilon}{2}$ as claimed. 
\begin{figure}[htbp]
  \centering
  \begin{psfrags}
  \psfrag{a}{\small{$p_1$}}
  \psfrag{b}{\small{$q_1$}}
  \psfrag{c}{\small{$p_2$}}
  \psfrag{d}{\small{$q_2$}}
  \psfrag{e}{\small{$p_r$}}
  \psfrag{f}{\small{$q_r$}}
\psfrag{g}{\begin{tabular}{@{}l@{}}
\small{  $q_1$}\\
\small{$+\varepsilon_1$}\end{tabular}}
  \psfrag{h}{\begin{tabular}{@{}l@{}}
\small{  $q_2$}\\
\small{$+\varepsilon_2$}\end{tabular}}
  \psfrag{j}{\begin{tabular}{@{}l@{}}
\small{  $q_r$}\\
\small{$+\varepsilon_r$}\end{tabular}}
\psfrag{k}{\begin{tabular}{@{}l@{}}
$r-1+\varepsilon$ \\saddle \\moves\end{tabular}}
\psfrag{m}{\begin{tabular}{@{}l@{}}
\small{  $p_r$}\\
\small{$+\varepsilon_p$}\end{tabular}}
\psfrag{n}{\small{$K$}}
\psfrag{p}{\small{$J_\varepsilon$}}
  \includegraphics[width=0.63\textwidth]{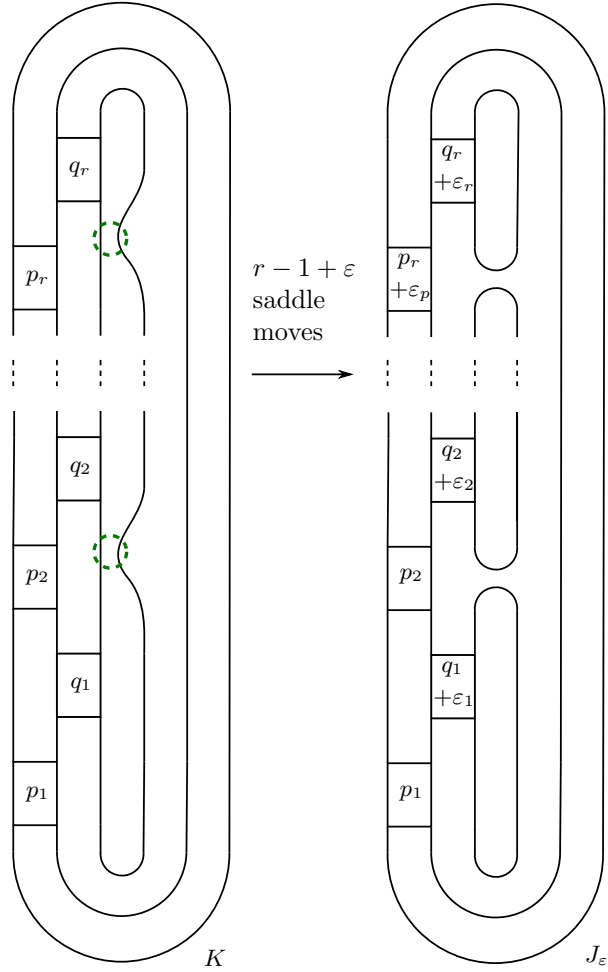}
\end{psfrags}
\caption{A schematic of a cobordism between the knots $K = \widehat{\gamma}$ and $J_{\varepsilon}= T_{2,\sum_{i=1}^r p_i +\varepsilon_p} \# T_{2,q_1+\varepsilon_1} \# T_{2,q_2+\varepsilon_2} \# \dots \# T_{2,q_r+\varepsilon_r}$ realized by $r-1+\varepsilon$ saddle moves.}  
\label{fig:schematicCob}
\end{figure}

By \eqref{eq:strategy}, we get $\left\vert\Upsilon_K(t)-\Upsilon_{J_{\varepsilon}}(t)\right\vert \leq 
g(C)  t$ for all $0 \leq t \leq 1$, hence
\begin{align}\label{eq:upslowerbound}
\Upsilon_K(t) &\leq  \Upsilon_{J_{\varepsilon}}(t)+g (C) t\qquad \text{for all }0\leq t \leq 1.
\end{align}
By \Cref{additivity,lem:Upsilontorus2} from \Cref{sec:upsilonprops}, we have 
\begin{align*}
\Upsilon_{J_{\varepsilon}}(t)&
= \left(-\frac{\sum_{i=1}^r p_i +\varepsilon_p-1}{2}-\frac{q_1+\varepsilon_1-1}{2}-\frac{q_2+\varepsilon_2-1}{2}\dots -\frac{q_r+\varepsilon_r-1}{2}\right)t\\
&
= -\frac{1}{2}\left(\sum\limits_{i=1}^r (p_i + q_i)  -(r+1)+\varepsilon\right)t,
\end{align*}
so \eqref{eq:genusgeps} and \eqref{eq:upslowerbound} imply
\begin{align*}
\Upsilon_K(t)  \leq \left(-\frac{\sum\limits_{i=1}^r (p_i + q_i) }{2}+r\right) t 
\qquad
\text{for all }0 \leq t \leq 1.
\end{align*}
The claim follows, since by \Cref{eq:sliceBennequin}, 
we have
\begin{align*}
g(K) &=\frac{\operatorname{wr}(\gamma)-2}{2}=\frac{\sum\limits_{i=1}^r (p_i + q_i) -2}{2}.\qedhere
\end{align*}
\end{proof}

The following two lemmas improve the bound from \Cref{lem:upsilonUpperBound} for knots that are closures of positive $3$-braids of Garside normal form \eqref{eq:evenpower} or \eqref{eq:oddpower}, respectively.

\begin{lem}\label{lem:Upsilon1_Delta2l+1Upper}
Let $\gamma = \Delta^{2\ell+1}a^{p_1}b^{q_1}\cdots a^{p_{r-1}}b^{q_{r-1}}a^{p_r}$ for some $\ell \geq 0$, $ r\geq 1,$ $p_r \geq 1$ and $p_i, q_i\geq 1$ for $i\in \{1, \dots, r-1\}$ such that $K = \widehat{\gamma}$ is a knot. Then
\begin{align*}
\Upsilon_K(t) 
 \leq \left(-\frac{\sum\limits_{i=1}^{r-1}(p_i + q_i)+p_r }{2}+r-2\ell-\frac{3}{2} \right) t
\qquad
\text{for all } 0 \leq t \leq 1.
\end{align*}
\end{lem}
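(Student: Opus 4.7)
My plan is to reduce to Lemma~\ref{lem:upsilonUpperBound} by exhibiting $\gamma$, up to conjugation, as a positive braid word of the shape $a^{P_1}b^{Q_1}\cdots a^{P_{r+\ell}}b^{Q_{r+\ell}}$ with all $P_i,Q_i\geq 1$. Lemma~\ref{lem:upsilonUpperBound}, applied with the role of $r$ played by $r+\ell$, then yields $\Upsilon_K(t)\leq\bigl(-g(K)+r+\ell-1\bigr)t$, and the slice-Bennequin formula, which gives $g(K)=3\ell+\tfrac12+\tfrac12\bigl(\sum_{i=1}^{r-1}(p_i+q_i)+p_r\bigr)$, converts this directly into the claimed bound since $-g(K)+r+\ell-1=-\tfrac12\bigl(\sum_{i=1}^{r-1}(p_i+q_i)+p_r\bigr)+r-2\ell-\tfrac32$.

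The block decomposition is built by induction on $\ell$ using two facts about $B_3$: the alternative factorization $\Delta^2=a^2ba^2b$, which follows from $(ab)^3=a(bab)ab=a(aba)ab=a^2ba^2b$, and the centrality of $\Delta^2$. For the base case $\ell=0$, writing $\delta=a^{p_1}b^{q_1}\cdots a^{p_{r-1}}b^{q_{r-1}}a^{p_r}$, I expand $\Delta\delta=aba\cdot\delta=ab\,a^{p_1+1}b^{q_1}a^{p_2}\cdots b^{q_{r-1}}a^{p_r}$ and cyclically move the trailing $a^{p_r}$ to the front. This produces the word $a^{p_r+1}b\,a^{p_1+1}b^{q_1}a^{p_2}b^{q_2}\cdots a^{p_{r-1}}b^{q_{r-1}}$ (or simply $a^{p_1+2}b$ when $r=1$), which has exactly $r$ blocks of each generator and starts with an $a$-block of length at least two.

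For the inductive step, if $\gamma_\ell=\Delta^{2\ell+1}\delta$ is conjugate to $w_\ell=a^{P_1}b^{Q_1}\cdots a^{P_{r+\ell}}b^{Q_{r+\ell}}$ with $P_1\geq 2$, then centrality gives $\gamma_{\ell+1}\sim\Delta^2 w_\ell$; pulling $a^{P_1}$ past $\Delta^2$ and substituting $\Delta^2=a^2ba^2b$ yields
\[
\Delta^2 w_\ell=a^{P_1}\cdot a^2ba^2b\cdot b^{Q_1}a^{P_2}\cdots b^{Q_{r+\ell}}=a^{P_1+2}b\,a^2b^{Q_1+1}a^{P_2}b^{Q_2}\cdots a^{P_{r+\ell}}b^{Q_{r+\ell}},
\]
a positive word with exactly one additional $a$-block and one additional $b$-block, whose first $a$-block still has length at least two. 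The induction therefore closes.

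The main subtlety, and the reason a naive application of Lemma~\ref{lem:upsilonUpperBound} is too weak by precisely $\ell$, is finding a factorization of each copy of $\Delta^2$ that introduces only one new block pair to the word. The presentation $(ab)^3$ would contribute three; the presentation $a^2ba^2b$ contributes two; combining the latter with the centrality of $\Delta^2$ to merge the outermost $a$ into the existing leading $a^{P_1}$ and the innermost $b$ into the following $b^{Q_1}$ saves one further block and produces the sharp count $r+\ell$ needed for the bound.
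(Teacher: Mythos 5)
Your proof is correct, and it follows the same overall strategy as the paper: conjugate $\gamma$ to a positive braid word with exactly $r+\ell$ syllable pairs $a^{P_i}b^{Q_i}$, apply \Cref{lem:upsilonUpperBound} with $r$ replaced by $r+\ell$, and convert via the slice--Bennequin formula $g(K)=\frac{\Sigma_\gamma}{2}+3\ell+\frac{1}{2}$. The difference lies entirely in how the block-efficient word is produced. The paper handles $\ell\geq 1$ in one shot using the closed-form identity $(ab)^{3\ell+1}=a^2ba^3(aba^3)^{\ell-1}ba^{\ell}$ (equation \eqref{lem:braideq1}, imported from the literature) applied to $\Delta^{2\ell+1}=(ab)^{3\ell+1}a$, followed by a single conjugation; you instead induct on $\ell$, absorbing one factor of $\Delta^2$ at a time via the elementary identity $\Delta^2=a^2ba^2b$ and centrality, merging the outer $a^2$ into the leading $a$-block and the trailing $b$ into the following $b$-block so that each $\Delta^2$ costs exactly one new syllable pair. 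Your route is self-contained (it needs no external identity) and makes the count $r+\ell$ transparent; the paper's route produces an explicit final braid word, which it reuses later (e.g.\ in the proof of \Cref{cor:switches2} to exhibit a representative realizing the minimal $r$). One small remark: the invariant $P_1\geq 2$ that you carry through the induction is never actually needed — the merging step works just as well with $P_1=1$ — so it is harmless but superfluous.
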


In the proof of \Cref{lem:Upsilon1_Delta2l+1Upper}, we will use that in $B_3$, we have 
\begin{align}\label{lem:braideq1}
(ab)^{3n +1} = ab \Delta^{2n} =
a^2ba^3(aba^3)^{n -1}ba^{n } \qquad \text{for all } n  \geq  1, 
\end{align}
where $\Delta^2 = (aba)^2 = (ab)^3=(ba)^3$; see \cite[Proof of Prop. 22]{Feller_2016}.

\begin{proof}[Proof of \Cref{lem:Upsilon1_Delta2l+1Upper}]
Let $\Sigma_\gamma =\sum\limits_{i=1}^{r-1}(p_i + q_i)+p_r$ and note that using \Cref{eq:sliceBennequin}, we have
\begin{align}\label{eq:genusK}
g(K) = \frac{3(2\ell+1)+ \Sigma_\gamma -2}{2} = \frac{\Sigma_\gamma }{2}+3\ell+\frac{1}{2}.
\end{align}
If $\ell = 0$, then $\gamma = \Delta a^{p_1}b^{q_1}\cdots a^{p_{r-1}}b^{q_{r-1}}a^{p_r}$ is conjugate to 
\begin{align*}
\gamma_1= 
a^{p_1+1}b^{q_1}\cdots a^{p_{r-1}}b^{q_{r-1}}a^{p_r+1}b
\end{align*}
and $\widehat{\gamma_1}=\widehat{\gamma}=K$, so $g\left(\widehat{\gamma_1}\right)=\frac{\Sigma_\gamma }{2}+\frac{1}{2}$. 
By \Cref{lem:upsilonUpperBound}, we get
\begin{align*}
\Upsilon_K(t) \leq \left(- g\left(\widehat{\gamma_1}\right)+r -1\right) t =
 \left(-\frac{\Sigma_\gamma }{2}+r -\frac{3}{2}\right)t \qquad
\text{for all } 0 \leq t \leq 1.
\end{align*}
For $\ell \geq 1$, using $\Delta^{2\ell + 1}  = (ab)^{3\ell} aba = (ab)^{3\ell +1} a$, we have 
\begin{align*}
\gamma &= \Delta^{2\ell+1}a^{p_1}b^{q_1}\cdots a^{p_{r-1}}b^{q_{r-1}}a^{p_r} =(ab)^{3\ell +1} a^{p_1+1}b^{q_1}\cdots a^{p_{r-1}}b^{q_{r-1}}a^{p_r}\\
&\overset{\eqref{lem:braideq1}}{=}a^2ba^3(aba^3)^{\ell-1}ba^{p_1+\ell+1}b^{q_1}\cdots a^{p_{r-1}}b^{q_{r-1}}a^{p_r}\\
&\sim a^{p_r+2}ba^3(aba^3)^{\ell-1}ba^{p_1+\ell+1}b^{q_1}\cdots a^{p_{r-1}}b^{q_{r-1}}=:\gamma_1.
\end{align*}
We have $\widehat{\gamma_1}=\widehat{\gamma}=K$ and
$
g\left(\widehat{\gamma_1}\right) =  \frac{\Sigma_\gamma }{2}+3\ell+\frac{1}{2}
$
by \eqref{eq:genusK}. Again, \Cref{lem:upsilonUpperBound} implies
\begin{align*}
\Upsilon_K(t) \leq 
\left( - g\left(\widehat{\gamma_1}\right)+r+\ell -1\right) t =
 \left(-\frac{\Sigma_\gamma }{2}+r - 2\ell-\frac{3}{2}\right) t \qquad \text{for all } 0 \leq t \leq 1,
\end{align*}
which proves the claim of the lemma.
\end{proof}

\begin{lem}\label{lem:Upsilon1_Delta2l_upperBound}
Let $\gamma = \Delta^{2\ell}a^{p_1}b^{q_1}\cdots a^{p_r}b^{q_r}$ for some $\ell \geq 0$, $r\geq 1$ and $p_i,q_i\geq 1$ for $i\in \{1, \dots, r\}$ such that $K = \widehat{\gamma}$ is a knot.
Then
\begin{align*}
\Upsilon_K(t) 
 \leq \left(-\frac{\sum\limits_{i=1}^r (p_i + q_i) }{2}+r-2\ell\right) t
\qquad
\text{for all } 0 \leq t \leq 1.
\end{align*}
\end{lem}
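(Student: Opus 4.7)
The plan is to mimic the strategy employed in the proof of Lemma~\ref{lem:Upsilon1_Delta2l+1Upper}: produce a positive $3$-braid word $\gamma_1$ conjugate to $\gamma$ (so that $\widehat{\gamma_1} = K$) with exactly $\ell + r$ pairs $(a^{P_i},b^{Q_i})$, and then apply Lemma~\ref{lem:upsilonUpperBound} to $\gamma_1$. Setting $\Sigma_\gamma := \sum_{i=1}^r(p_i+q_i)$, equation \eqref{eq:sliceBennequin} gives
\[
g(K) \;=\; \frac{6\ell + \Sigma_\gamma - 2}{2} \;=\; \frac{\Sigma_\gamma}{2} + 3\ell - 1,
\]
so the bound $(-g(K)+(\ell+r)-1)\,t$ produced by Lemma~\ref{lem:upsilonUpperBound} simplifies exactly to $\bigl(-\Sigma_\gamma/2 + r - 2\ell\bigr)\,t$, which is the claimed estimate. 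The case $\ell = 0$ is immediate since $\gamma$ itself already has $r$ pairs, so it suffices to treat $\ell \geq 1$.

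The key ingredient is the identity $\Delta^2 = b^2 a b^2 a$ in $B_3$, which follows from $\Delta^2 = (ba)^3$ together with one application of the braid relation $bab = aba$. For any positive $3$-braid word in standard form $\alpha = a^{P_1}b^{Q_1}\cdots a^{P_R}b^{Q_R}$ with all $P_i,Q_i \geq 1$, this identity yields
\[
\Delta^2 \alpha \;=\; b^2 a b^2 a \cdot a^{P_1}b^{Q_1}\cdots a^{P_R}b^{Q_R} \;=\; b^2 a b^2 a^{P_1+1}b^{Q_1}\cdots a^{P_R}b^{Q_R},
\]
and cyclically conjugating the leading $b^2$ to the end of the word produces
\[
\Delta^2\alpha \;\sim\; a b^2 a^{P_1+1}b^{Q_1}a^{P_2}b^{Q_2}\cdots a^{P_R}b^{Q_R+2}.
\]
The right-hand side is again in the same standard form (starts with an $a$-block, ends with a $b$-block, all exponents $\geq 1$), but with exactly one additional pair, namely the new initial $(a,b^2)$.

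Iterating this base transformation $\ell$ times, starting from $\gamma' = a^{p_1}b^{q_1}\cdots a^{p_r}b^{q_r}$, a straightforward induction on $\ell$ yields
\[
\gamma \;\sim\; \gamma_1 \;:=\; a b^2 (a^2 b^2)^{\ell-1} a^{p_1+1}b^{q_1}a^{p_2}b^{q_2}\cdots a^{p_r}b^{q_r+2\ell}.
\]
This is a positive $3$-braid word with $1 + (\ell-1) + r = \ell+r$ pairs and all exponents $\geq 1$, so Lemma~\ref{lem:upsilonUpperBound} applied to $\gamma_1$ delivers the desired bound. The main point to verify is the induction step: namely, that after each application the resulting word is again of the standard form $a^{P'_1}b^{Q'_1}\cdots a^{P'_{R'}}b^{Q'_{R'}}$ with all exponents $\geq 1$, so that the base transformation can be reapplied. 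This is immediate from inspecting the shape of the right-hand side above, and the pair count $\ell+r$ then follows at once.
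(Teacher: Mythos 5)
Your proof is correct and follows essentially the same strategy as the paper's: conjugate $\gamma$ to a positive braid word with exactly $\ell+r$ syllable pairs and apply Lemma~\ref{lem:upsilonUpperBound}, with the genus computation forcing the stated bound. The only difference is cosmetic --- the paper produces its word $\gamma_1 = a^{2\ell}b(a^2b^2)^{\ell-1}a^{p_1+2}b^{q_1}\cdots a^{p_r}b^{q_r+1}$ via the identity $(ab)^{3n-1}=a^{2n}b(a^2b^2)^{n-1}a$ from Lemma~\ref{lem:braideq2}, whereas you iterate $\Delta^2=b^2ab^2a$ to reach $ab^2(a^2b^2)^{\ell-1}a^{p_1+1}b^{q_1}\cdots a^{p_r}b^{q_r+2\ell}$; both words have $\ell+r$ pairs and the same writhe, so the conclusions agree.
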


In the proof, we will need the following statement about positive $3$-braids.

\begin{lem}\label{lem:braideq2}
In $B_3$, we have 
\begin{align}\label{eq:braideq2}
(ab)^{3n -1} = a^{2n }b(a^2b^2)^{n -1}a  \qquad \text{for all } n  \geq 1.
\end{align}
\end{lem}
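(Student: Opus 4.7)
The plan is to prove \eqref{eq:braideq2} by induction on $n \geq 1$, using only the braid relation $aba = bab$ and the fact (recalled at the start of \Cref{sec:normalform}) that $\Delta^2 = (ab)^3$ generates the center of $B_3$.

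For the base case $n = 1$, the claim reduces to $(ab)^2 = a^2 ba$, which follows from the braid relation in one step: $abab = a(bab) = a(aba) = a^2ba$. The same calculation also yields $(ab)^3 = abab \cdot ab = a^2ba \cdot ab = a^2 b a^2 b$, an identity I will invoke in the inductive step.

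For the inductive step, assume the formula holds for $n$. Writing $(ab)^{3(n+1)-1} = (ab)^{3n-1}\cdot (ab)^3$ and applying the induction hypothesis, the left-hand side becomes $a^{2n} b (a^2 b^2)^{n-1} a \cdot (ab)^3$. Since $(ab)^3$ is central in $B_3$, it commutes with the word $b(a^2 b^2)^{n-1} a$, so this equals $a^{2n} \cdot (ab)^3 \cdot b(a^2 b^2)^{n-1} a$. Substituting $(ab)^3 = a^2 b a^2 b$ and collecting powers then gives $a^{2n+2} b a^2 b \cdot b (a^2 b^2)^{n-1} a = a^{2(n+1)} b (a^2 b^2)^n a$, completing the induction.

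I do not anticipate any genuine obstacle here; the only bookkeeping is to park the central element $\Delta^2$ in the correct position so that the exponents line up. In particular, no braid manipulations beyond $aba = bab$ and the centrality of $\Delta^2$ are needed.
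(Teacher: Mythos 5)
Your proof is correct and takes essentially the same approach as the paper: a short induction using only the braid relation and the centrality of $\Delta^2=(ab)^3$. The only (cosmetic) difference is that you induct directly on the stated identity by appending a central factor $(ab)^3=a^2ba^2b$, whereas the paper first peels off a generator and inducts on the auxiliary identity $(ab)^{3(n-1)}a = a^{2n-1}b(a^2b^2)^{n-2}a^2b$; your version is, if anything, slightly more streamlined.
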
 

\begin{proof}
Starting with the left-hand side we have
\begin{align*}
(ab)^{3n -1} = a(ba)^{3(n -1)}bab 
=a(ab)^{3(n -1)}aba,
\end{align*}
which proves \Cref{eq:braideq2} for $n  = 1$. We now show by induction that
\begin{align}\label{eq:braidind}
(ab)^{3(n -1)}a = a^{2n -1}b(a^2b^2)^{n -2}a^2b \qquad \text{for all } n  \geq 2,
\end{align}
which implies the lemma for all $n \geq 1$. For $n  = 2$, we have
\begin{align*}
(ab)^3 a  = a (ba)^3 = a(ab)^3 = a^2babab = a^3ba^2b.
\end{align*}
Assuming that \eqref{eq:braidind} is true for some $n -1 \geq 2$, we get
\begin{align*}
(ab)^{3(n -1)}a & =a(ba)^{3(n -1)} = a(ab)^{3(n -1)} = 
a^2(ba)^{3(n -2)}babab= a^2(ab)^{3(n -2)}aba^2b 
\\
&= a^2\left(a^{2n -3}b(a^2b^2)^{n -3}a^2b\right)ba^2b= a^{2n -1}b(a^2b^2)^{n -2}a^2b,
\end{align*}
using the induction hypothesis in the second to last equality. 
\end{proof}

\begin{proof}[Proof of \Cref{lem:Upsilon1_Delta2l_upperBound}]
Let $\Sigma_\gamma =\sum\limits_{i=1}^r (p_i + q_i)$. 
If $\ell = 0$, then by \Cref{eq:sliceBennequin} and \Cref{lem:upsilonUpperBound} we have 
\begin{align*}
\Upsilon_K(t) \leq  \left(- g\left(K\right)+r -1\right)t = \left(-\frac{\Sigma_\gamma }{2}+r\right)t \qquad
\text{for all } 0 \leq t \leq 1.
\end{align*}
For $\ell \geq 1$, using $\Delta^{2}= (ba)^{3}$ and \Cref{lem:braideq2}, we have 
\begin{align*}
\gamma &=  (ba)^{3\ell}a^{p_1}b^{q_1}\cdots a^{p_r}b^{q_r}\sim (ab)^{3\ell-1}a^{p_1+1}b^{q_1}\cdots a^{p_r}b^{q_r+1}\\
 &\sim 
a^{2\ell}b(a^2b^2)^{\ell-1}a^{p_1+2}b^{q_1}\cdots a^{p_r}b^{q_r+1}=:\gamma_1.
\end{align*}
Note that $\widehat{\gamma_1}=\widehat{\gamma}=K$ and by \Cref{eq:sliceBennequin}, we have 
\begin{align*}
g\left(\widehat{\gamma_1}\right) = g(K) = \frac{6\ell  + \Sigma_\gamma -2}{2} = \frac{\Sigma_\gamma }{2}+3\ell-1.
\end{align*}
Again by \Cref{lem:upsilonUpperBound}, we get
\begin{align*}
\Upsilon_K(t) \leq 
\left(- g\left(\widehat{\gamma_1}\right)+r+\ell -1\right) t =
\left(  -\frac{\Sigma_\gamma }{2}+r - 2\ell\right) t \qquad \text{for all } 0 \leq t \leq 1.&\qedhere
\end{align*}
\end{proof}

We will now focus on $\upsilon(K) = \Upsilon_K(1)$ and prove \Cref{prop:posupsilon} by showing that the upper bounds on $\Upsilon_K(t)$ from \Cref{lem:Upsilon1_Delta2l+1Upper} and \Cref{lem:Upsilon1_Delta2l_upperBound} for $t=1$ are also lower bounds. 
We will need the following observation used in \cite[Example 4.5]{Feller_2017} about $3$-braids, which we prove here for completeness.
\begin{lem}\label{lem:braideq3}
In $B_3$, we have
\begin{align}\label{eq:braids}
a^{2n+1}b\left(a^2b^2\right)^{n}=(ab)^{3n+1} \, \text{and } \,
b^{2n+1}a\left(b^2a^2\right)^{n}=(ba)^{3n+1}
\qquad \text{for all } n\geq 0.
\end{align}
\end{lem}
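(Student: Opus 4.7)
The plan is to prove the first identity by induction on $n$, and then deduce the second by applying the automorphism of $B_3$ that swaps the generators $a$ and $b$ (well-defined since the defining relation $aba=bab$ is symmetric under this swap). The essential auxiliary identity I will use is $a^2 b a^2 b = (ab)^3$, which is a direct rewrite of the braid relation: indeed,
\[a^2 b a^2 b = a\cdot (aba)\cdot(ab) = a \cdot (bab)\cdot(ab) = (ab)(ab)(ab) = (ab)^3.\]
Together with the centrality of $\Delta^2 = (ab)^3$ in $B_3$, this will suffice.

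The base case $n=0$ of the first identity is trivial: both sides equal $ab$. For the inductive step, assume that $a^{2n-1}b(a^2b^2)^{n-1} = (ab)^{3n-2}$ for some $n \geq 1$. Multiplying on the left by $a^2$ and on the right by $a^2 b^2$, we obtain
\[a^{2n+1} b (a^2b^2)^n = a^2 \cdot (ab)^{3n-2} \cdot a^2 b^2.\]
Writing $(ab)^{3n-2} = \Delta^{2(n-1)} \cdot ab$ and using that $\Delta^{2(n-1)}$ is central, the right-hand side simplifies to $\Delta^{2(n-1)} \cdot a^3 b a^2 b^2$. On the other hand, the target expression factors as $(ab)^{3n+1} = \Delta^{2(n-1)} \cdot (ab)^4$. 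Hence the inductive step reduces to the single identity $a^3 b a^2 b^2 = (ab)^4$.

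This last identity follows directly from the auxiliary identity together with the centrality of $\Delta^2$:
\[a^3 b a^2 b^2 = a \cdot (a^2 b a^2 b) \cdot b = a \cdot (ab)^3 \cdot b = a \cdot \Delta^2 \cdot b = \Delta^2 \cdot ab = (ab)^4.\]
The second identity of the lemma then follows from the first by the $a \leftrightarrow b$ symmetry of $B_3$ (equivalently, the verbatim argument works after swapping all $a$'s with $b$'s, with the symmetric computation $b^2 a b^2 a = (ba)^3$ replacing the auxiliary identity). I do not expect any serious obstacle here: the whole argument is an elementary induction whose inductive step is, via centrality of $\Delta^2$, reduced to a short calculation in $B_3$.
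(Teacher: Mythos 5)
Your proof is correct and follows essentially the same route as the paper: induction on $n$, with the inductive step reduced via centrality of $\Delta^2$ to the single identity $a^3ba^2b^2=(ab)^4$, which both you and the paper verify by a short application of the braid relation. The only cosmetic differences are that you check $a^3ba^2b^2=(ab)^4$ via $a^2ba^2b=(ab)^3$ rather than via $\Delta a=b\Delta$, and that you make the $a\leftrightarrow b$ symmetry for the second identity explicit where the paper leaves it implicit.
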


\begin{proof}
We prove the first statement by induction.
For $n=0$, the equality is clearly true. For $n=1$, using $ \Delta a= b \Delta $ and $\Delta b=a \Delta$, we have 
\begin{align*}
a^3b a^2b^2 = a^2 \Delta a b^2 = a^2 ba \Delta b = a \Delta^2 b = \Delta^2 ab = (ab)^{4}.
\end{align*}
We now assume that \eqref{eq:braids} is true for some $n-1 \geq 0$. Using the induction hypothesis and the equality for $n=1$, we get
\begin{align*}
a^{2n+1}b\left(a^2b^2\right)^{n}
 &= a^2 \left(ab\right)^{3(n-1)+1} a^2 b^2=a^3 b \Delta^{2(n-1)}a^2 b^2\\&=\Delta^{2(n-1)} a^3b a^2b^2 =(ab)^{3(n-1)}(ab)^{4}=(ab)^{3n+1}.\qedhere
\end{align*}
\end{proof}

\begin{lem}\label{lem:Upsilon1_Delta2l+1}
Let $\gamma = \Delta^{2\ell+1}a^{p_1}b^{q_1}\cdots a^{p_{r-1}}b^{q_{r-1}}a^{p_r}$ for some $\ell \geq 0$, $ r\geq 1,$ $p_r \geq 3$ and $p_i, q_i\geq 2$ for $i\in \{1, \dots, r-1\}$ such that $K=\widehat{\gamma}$ is a knot.
Then
\begin{align*}
\upsilon(K)&=-\frac{\sum\limits_{i=1}^{r-1}(p_i + q_i)+p_r}{2}+r-2\ell-\frac{3}{2}.
\end{align*}
\end{lem}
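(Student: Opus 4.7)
The upper bound
$$\upsilon(K) \leq -\frac{\sum_{i=1}^{r-1}(p_i+q_i)+p_r}{2}+r-2\ell-\frac{3}{2}$$
follows directly from \Cref{lem:Upsilon1_Delta2l+1Upper} at $t=1$, since its hypotheses are implied by the present (stronger) ones. The task is therefore to establish the matching lower bound.

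My plan is to construct an explicit cobordism $C$ from $K$ to a knot $J$ of the form $T_{3,3\ell+k}\# T_{2,m_1}\#\cdots\# T_{2,m_s}$ for some $k\in\{1,2\}$ and positive integers $m_j$, and then apply the cobordism bound $\upsilon(K)\geq\upsilon(J)-g(C)$ derived from \eqref{eq:strategy}. Because $\upsilon(J)$ is computable via \eqref{lem:Upsilontorus}, \eqref{lem:Upsilontorus2}, and additivity \eqref{additivity}, this yields a concrete lower bound which I would arrange to coincide with the upper bound. The key point is that by preserving a $T_{3,3\ell+k}$ summand in $J$ (rather than splitting $K$ entirely into $T_{2,*}$ factors as in the proof of \Cref{lem:Upsilon1_Delta2l+1Upper}) one saves enough saddle moves to achieve the tight lower bound.

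To construct $C$ explicitly, I would first use the identity $\Delta^{2\ell+1}=(ab)^{3\ell+1}\cdot a=a^{2\ell+1}b(a^2b^2)^\ell\cdot a$ from \Cref{lem:braideq3} to rewrite $\gamma$ as a positive braid whose initial subword $a^{2\ell+1}b(a^2b^2)^\ell$ closes to $T_{3,3\ell+1}$. Then, after cyclically conjugating $\gamma$ to position each tail block $a^{p_i}$ or $b^{q_i}$ at the end of the braid word, I would apply the two-saddle construction of \Cref{ex:introstrategy} to split off a $T_{2,m_j}$ connected summand from each such block. The hypotheses $p_i,q_i\geq 2$ for $i<r$ and $p_r\geq 3$ ensure that each block has enough letters to perform the saddle moves without further stabilization.

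The main obstacle will be the precise combinatorial bookkeeping: verifying that the accumulated cobordism genus, together with parity adjustments $\varepsilon_j$ analogous to those in the proof of \Cref{lem:upsilonUpperBound}, produces a target $\upsilon(J)$ and genus $g(C)$ whose difference matches the upper bound exactly. The constant $-\tfrac{3}{2}$ (as opposed to $0$ in the analogous case \eqref{eq:evenpower} formula from \Cref{lem:Upsilon1_Delta2l_upperBound}) should trace back to the extra $a$-factor in $\Delta^{2\ell+1}=(ab)^{3\ell+1}\cdot a$ combined with the asymmetric final block $a^{p_r}$ in the tail.
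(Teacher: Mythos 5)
The upper-bound half of your proposal is fine: \Cref{lem:Upsilon1_Delta2l+1Upper} at $t=1$ does give the inequality $\upsilon(K)\leq -\tfrac{\Sigma}{2}+r-2\ell-\tfrac32$, where $\Sigma=\sum_{i=1}^{r-1}(p_i+q_i)+p_r$. The lower-bound strategy, however, cannot work as described, for a quantitative reason that no amount of combinatorial bookkeeping will fix. Your target knot $J=T_{3,3\ell+k}\#T_{2,m_1}\#\cdots\#T_{2,m_s}$ satisfies $g_4(J)+\upsilon(J)=\ell$ for either $k\in\{1,2\}$ (each $T_{2,m}$ summand contributes $0$ to this sum and the $T_{3,3\ell+k}$ summand contributes $\ell$ by \eqref{lem:Upsilontorus2} and \eqref{lem:Upsilontorus}). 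On the other hand, the formula you are trying to prove is equivalent to $g(K)+\upsilon(K)=r+\ell-1$, using $g(K)=\tfrac{\Sigma}{2}+3\ell+\tfrac12$ from \Cref{eq:sliceBennequin}. Now for \emph{any} cobordism $C$ between $K$ and $J$ one has $g(C)\geq g_4(K\#-J)\geq g_4(K)-g_4(J)=g(K)-g_4(J)$, since $K$ is a positive braid closure. Hence
\begin{align*}
\upsilon(J)-g(C)\;\leq\;\upsilon(J)+g_4(J)-g(K)\;=\;\ell-g(K),
\end{align*}
so the best lower bound your method can certify is $\upsilon(K)\geq \ell-g(K)=-\tfrac{\Sigma}{2}-2\ell-\tfrac12$, which falls short of the required value by exactly $r-1$. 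For $r\geq 2$ the approach therefore provably fails, no matter how the saddle moves are arranged; the issue is not the genus of $C$ but the value of $g_4+\upsilon$ on the target $J$.

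The missing idea is that one must first \emph{lengthen} the braid so that the target torus knot has index $3(\ell+r)+1$ rather than $3\ell+k$. The paper conjugates $\gamma$ to $\gamma_1=\Delta^{2\ell}bab^2a^{p_1}b^{q_1}\cdots a^{p_{r-1}}b^{q_{r-1}}a^{p_r-1}$ and sets $\alpha=b^{2r}\gamma_1$. The two-saddle trick of \Cref{ex:introstrategy}, applied to the inserted $b^{2r}$, compares $\widehat{\alpha}$ with $K\#T_{2,2r+1}$ through a genus-$1$ cobordism and yields $\upsilon(K)\geq \upsilon(\widehat{\alpha})+r-1$; this is where the extra $r-1$ is gained. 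One then deletes $\Sigma-4r+1$ generators from $\alpha$ to reach $\beta=\Delta^{2\ell}b^{2r+1}a(b^2a^2)^r=(ba)^{3(\ell+r)+1}$ (via \Cref{lem:braideq3}), whose closure $T_{3,3(\ell+r)+1}$ satisfies $g_4+\upsilon=\ell+r$; the resulting deletion cobordism gives $\upsilon(\widehat{\alpha})\geq -\tfrac{\Sigma}{2}-2\ell-\tfrac12$, and combining the two steps produces the matching lower bound. So your instinct to invoke \Cref{ex:introstrategy} is correct, but it must be used to compare $K$ with a \emph{longer} braid word whose closure sits near a higher torus knot, not to split $T_{2,*}$ summands off $K$ itself --- the latter is exactly the upper-bound construction of \Cref{lem:upsilonUpperBound} and points the inequality the wrong way.
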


\begin{proof}[Proof of \Cref{lem:Upsilon1_Delta2l+1}]
Let $\Sigma_\gamma =\sum\limits_{i=1}^{r-1}(p_i + q_i)+p_r$.
From \Cref{lem:Upsilon1_Delta2l+1Upper}, it follows directly that
$
\upsilon(K) = \Upsilon_K(1) \leq -\frac{\Sigma_\gamma }{2}+r-2\ell-\frac{3}{2},
$
so we are left to show that $\upsilon(K) \geq -\frac{\Sigma_\gamma }{2}+r-2\ell-\frac{3}{2}$. To that end, consider
\begin{align*}
\gamma &= \Delta^{2\ell+1}a^{p_1}b^{q_1}\cdots a^{p_{r-1}}b^{q_{r-1}}a^{p_r} 
\sim \Delta^{2\ell}a\Delta a^{p_1}b^{q_1}\cdots a^{p_{r-1}}b^{q_{r-1}}a^{p_r-1}\\
&= \Delta^{2\ell}bab^2 a^{p_1}b^{q_1}\cdots a^{p_{r-1}}b^{q_{r-1}}a^{p_r-1}
=:\gamma_1,
\end{align*}
where we used $a\Delta = abab= bab^2$.
Note that $\widehat{\gamma_1} = \widehat{\gamma}=K$.
Now, define
\begin{align*}
\alpha := b^{2r} \gamma_1= \Delta^{2\ell}b^{2r+1} ab^2 a^{p_1}b^{q_1}\cdots a^{p_{r-1}}b^{q_{r-1}}a^{p_r-1}
\end{align*}
and note that $\widehat{\alpha}$ is a knot.
By assumption, we have $p_r -1 \geq 2$.
There is a cobordism between $\widehat{\alpha}$ and the connected sum $ T_{2,2r+1}\#\widehat{\gamma_1} = T_{2,2r+1}\# K$ of genus $1$ by using two saddle moves similar to the two saddle moves illustrated in \Cref{fig:exintro} from \Cref{ex:introstrategy}.
Similarly as in \eqref{eq:exintro} from \Cref{ex:introstrategy}, we have
$\upsilon(K) \geq 
\upsilon\big(\widehat{\alpha}\big) +r -1$.
In order to find a lower bound for $\upsilon\big(\widehat{\alpha}\big)$, 
note that there is a cobordism $C$ between $\widehat{\alpha}$ and the torus knot $T=T_{3,3(\ell + r)+1}$ of genus $g(C) =\frac{\Sigma_\gamma }{2}-2r+\frac{1}{2}$. Here we think of $T$ as the closure of the braid word $\beta = \Delta^{2\ell}b^{2r+1}a(b^2a^2)^{r}$, which 
 is equal to $\Delta^{2\ell} (ba)^{3r+1}=(ba)^{3(\ell +r) +1}$ as $3$-braids by \Cref{lem:braideq3}. 
The cobordism $C$ between $\widehat{\alpha}$ and $T=\widehat{\beta}$ can thus be realized by
\begin{align*}
p_1-2+q_1-2+\dots+p_{r-1}-2+q_{r-1}-2+p_r-3 
= \Sigma_\gamma -4r+1
\end{align*}
saddle moves corresponding to the deletion of the same number of generators $a$ and $b$ from the braid word $\alpha$ to obtain $\beta$. 
Hence the Euler characteristic of the cobordism $C$ is $\chi(C) = -\Sigma_\gamma +4r-1$. Since $C$ is connected and has two boundary components (as $\widehat{\alpha}$ and $T=\widehat{\beta}$ are knots), the genus of $C$ is indeed $g(C) =\frac{\Sigma_\gamma }{2}-2r+\frac{1}{2}$.
Now, by \eqref{eq:strategy}
and 
\Cref{lem:Upsilontorus}, we have
\begin{align*}
\upsilon\left(\widehat{\alpha}\right) \geq \upsilon\left( T \right) - g(C) = -2\left(\ell + r \right) -\left( \frac{\Sigma_\gamma }{2}-2r+\frac{1}{2}\right) = -\frac{\Sigma_\gamma }{2}-2\ell-\frac{1}{2}.
\end{align*}
It follows that
\begin{align*}
\upsilon(K) \geq 
\upsilon\left(\widehat{\alpha}\right) +r -1 \geq 
-\frac{\Sigma_\gamma }{2}+r-2\ell-\frac{3}{2}
\end{align*}
as claimed, hence the statement of the lemma.
\end{proof}

\begin{lem}\label{lem:Upsilon1_Delta2lCase1}
Let $\gamma = \Delta^{2\ell}a^{p_1}b^{q_1}\cdots a^{p_r}b^{q_r}$ for some $\ell \geq 0$, $r\geq 1$, $p_r, q_r\geq 3$ and $p_i,q_i\geq 2$ for $i\in \{1, \dots, r-1\}$
such that $K=\widehat{\gamma}$ is a knot. Then
\begin{align*}
\upsilon(K)&=-\frac{\sum\limits_{i=1}^r (p_i + q_i) }{2}+r-2\ell.
\end{align*}
\end{lem}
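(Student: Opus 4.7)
The upper bound $\upsilon(K)\le -\Sigma/2+r-2\ell$, with $\Sigma=\sum_{i=1}^r(p_i+q_i)$, is already supplied by \Cref{lem:Upsilon1_Delta2l_upperBound}, so my plan is to establish the matching lower bound by the two-step cobordism strategy that proved \Cref{lem:Upsilon1_Delta2l+1}: first relate $K$ to an auxiliary knot $\widehat{\alpha}$ via the $T_{2,\ast}$-trick of \Cref{ex:introstrategy}, then relate $\widehat{\alpha}$ to a torus knot of braid index $3$ by a cobordism realized by deleting generators from a suitable braid word for $\alpha$.

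Concretely, I would set
\begin{align*}
\alpha\;=\;\Delta^{2\ell}\,a^{p_1}b^{q_1}\cdots a^{p_{r-1}}b^{q_{r-1}}\,a^{p_r-1}\,b^{2(r+1)}\,a\,b^{q_r},
\end{align*}
obtained from $\gamma$ by splitting $a^{p_r}=a^{p_r-1}\cdot a$ and inserting the block $b^{2(r+1)}$ in between, so in particular $\widehat{\alpha}$ is a knot (even powers of $b$ do not affect the induced permutation in $S_3$). A short cyclic-conjugation argument gives $\alpha\sim\widetilde{\gamma}\cdot b^{2(r+1)}$ for some $\widetilde{\gamma}\sim\gamma$, so \Cref{ex:introstrategy} with $n=r+1$ provides a genus-$1$ cobordism between $\widehat{\alpha}$ and $K\#T_{2,2r+3}$; combined with $\upsilon(T_{2,2r+3})=-(r+1)$ from \eqref{lem:Upsilontorus2} and \eqref{eq:strategy} at $t=1$, this yields $\upsilon(K)\ge\upsilon(\widehat{\alpha})+r$.

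Next I would build a cobordism from $\widehat{\alpha}$ to the torus knot $T=T_{3,3(\ell+r)+1}$ using the braid word $\beta=\Delta^{2\ell}b^{2r+1}a(b^2a^2)^r$ supplied by \Cref{lem:braideq3}. After one further cyclic conjugation, $\alpha$ equals $\Delta^{2\ell}b^{2(r+1)}ab^{q_r}a^{p_1}b^{q_1}\cdots a^{p_{r-1}}b^{q_{r-1}}a^{p_r-1}$, and both $\alpha$ and $\beta$ now display $2r+2$ alternating blocks starting with a $b$-block. The hypotheses $p_i,q_i\ge 2$ for $i<r$ and $p_r,q_r\ge 3$ ensure that each block of $\alpha$ can be shrunk to the corresponding block of $\beta$ by deleting generators, for a total of
\begin{align*}
1+(q_r-2)+\sum_{i=1}^{r-1}\bigl((p_i-2)+(q_i-2)\bigr)+(p_r-3)\;=\;\Sigma-4r
\end{align*}
saddle moves, giving a connected cobordism of Euler characteristic $-(\Sigma-4r)$ and hence genus $\Sigma/2-2r$. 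Applying \eqref{eq:strategy} with $\upsilon(T)=-2(\ell+r)$ from \eqref{lem:Upsilontorus} then gives $\upsilon(\widehat{\alpha})\ge -2\ell-\Sigma/2$; combining with the previous inequality yields $\upsilon(K)\ge -\Sigma/2+r-2\ell$, matching the upper bound.

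The main obstacle compared with the proof of \Cref{lem:Upsilon1_Delta2l+1} is a block-count mismatch: because $\Delta^{2\ell}$ is central, no cyclic conjugation of $\gamma\cdot b^{2r}$ can produce the $2r+2$ alternating blocks required to match $\beta$, and the identity $a\Delta^{2\ell+1}=\Delta^{2\ell+1}b$ used in the odd case to introduce an extra $b$-block is unavailable here. Taking $n=r+1$ instead of $n=r$ and placing the extra $b^{2(r+1)}$ block inside $a^{p_r}$ creates the one additional alternating block at the cost of a single extra deletion (the ``$1$'' in $\Sigma-4r$ above), and the two cobordism bounds then combine to meet the upper bound exactly.
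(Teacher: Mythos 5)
Your proof is correct and follows essentially the same two-step strategy as the paper's: an application of the $T_{2,\ast}$-trick of \Cref{ex:introstrategy} followed by a deletion cobordism to $T_{3,3(\ell+r)+1}$, with the two resulting bounds meeting the upper bound from \Cref{lem:Upsilon1_Delta2l_upperBound}. The only (harmless) variation is in the bookkeeping: the paper prepends $a^{2r}$ (gaining $r-1$ via $T_{2,2r+1}$) and targets $\beta = \Delta^{2\ell}ba^{2r}b(a^2b^2)^{r-1}a^3b$, which requires an extra braid identity, whereas you insert $b^{2(r+1)}$ inside the last $a$-syllable (gaining $r$ via $T_{2,2r+3}$) and reuse the word $\beta=\Delta^{2\ell}b^{2r+1}a(b^2a^2)^r$ from \Cref{lem:braideq3} directly, at the cost of one extra deletion.
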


\begin{proof}[Proof of \Cref{lem:Upsilon1_Delta2lCase1}]
The proof uses similar ideas as the proof of \Cref{lem:Upsilon1_Delta2l+1}.
Let $\Sigma_\gamma =\sum\limits_{i=1}^r (p_i + q_i)$. 
By \Cref{lem:Upsilon1_Delta2l_upperBound}, we have $\upsilon(K) \leq -\frac{\Sigma_\gamma }{2}+r-2\ell$, so it remains to show that $\upsilon(K) \geq -\frac{\Sigma_\gamma }{2}+r-2\ell$. To that end, we consider
\begin{align*}
\gamma =\Delta^{2\ell}a^{p_1}b^{q_1}\cdots a^{p_r}b^{q_r} \sim  \Delta^{2\ell}ba^{p_1}b^{q_1}\cdots a^{p_r}b^{q_r-1}=:\gamma_1.
\end{align*}
Note that $\widehat{\gamma_1}=\widehat{\gamma}=K$. 
We define
\begin{align*}
\alpha := a^{2r} \gamma_1= a^{2r}  \Delta^{2\ell}ba^{p_1}b^{q_1}\cdots a^{p_r}b^{q_r-1} \sim 
\Delta^{2\ell} b a^{2r}b a^{p_1}b^{q_1}\cdots a^{p_r}b^{q_r-2}=:\alpha_1.
\end{align*}
Then $\widehat{\alpha_1}= \widehat{\alpha}$ is a knot and by assumption we have $q_r -2 \geq 1$. There is a cobordism between $\widehat{\alpha}$ and $ T_{2,2r+1}\#\widehat{\gamma_1} = T_{2,2r+1}\# K$ of genus $1$ by using two saddle moves similar to the cobordism considered in \Cref{ex:introstrategy} and in the proof of \Cref{lem:Upsilon1_Delta2l+1}, hence 
$\upsilon(K) \geq \upsilon\big(\widehat{\alpha_1}\big) +r -1$.
To find a lower bound for $\upsilon\big(\widehat{\alpha_1}\big)$, we observe that there is a cobordism $C$ between the knot $\widehat{\alpha_1}$ and the knot $\widehat{\beta}$, where 
\begin{align*}
\beta  = \Delta^{2\ell} b a^{2r}b (a^2b^2)^{r-1} a^3 b.
\end{align*}
Using \Cref{eq:braids} from \Cref{lem:braideq3} for $n-1$, in $B_3$, we have
\begin{align*}
b a^{2n}b \left(a^2b^2\right)^{n-1} a^2 &= 
ba(ab)^{3(n-1)+1} a^2 = ba \Delta^{2(n-1)}aba^2
= \Delta^{2n}
\qquad \text{for all } n \geq 1.
\end{align*}
We thus have
$
\beta = \Delta^{2\ell} \Delta^{2r} a b = (ab)^{3(\ell+r)+1}
$, so the closure of $\beta$ is the torus knot $T=T_{3,3(\ell + r)+1}$ with $\upsilon(T) = -2\left(\ell + r \right)$ by \Cref{lem:Upsilontorus}. 
The cobordism $C$ between $\widehat{\alpha_1}$ and $T=\widehat{\beta}$ can be realized by
\begin{align*}
p_1-2+q_1-2+\dots+p_{r-1}-2+q_{r-1}-2+p_r-3+q_r-3
= \Sigma_\gamma -4r-2
\end{align*}
saddle moves corresponding to the deletion of the same number of generators $a$ and $b$ from the braid word $\alpha_1$ to obtain $\beta$. By a similar Euler
characteristic argument as in the proofs of \Cref{lem:upsilonUpperBound} and \Cref{lem:Upsilon1_Delta2l+1}, the genus of this cobordism is $g(C) =\frac{\Sigma_\gamma }{2}-2r-1$. Note that here we used $p_r \geq 3$ and $q_r \geq 3$.
Now, by \eqref{eq:strategy}, we have 
\begin{align*}
\upsilon(\widehat{\alpha_1}) &\geq \upsilon\left( T \right) - g(C) 
= -\frac{\Sigma_\gamma }{2}-2\ell+1, \qquad \text{hence}\\
\upsilon(K) &\geq 
\upsilon\left(\widehat{\alpha_1}\right) +r -1 \geq 
-\frac{\Sigma_\gamma }{2}+r-2\ell.\qedhere
\end{align*}
\end{proof}

\begin{lem}\label{lem:Upsilon1_Delta2lCase2}
Let $\gamma = \Delta^{2\ell}a^{p_1}b^{q_1}\cdots a^{p_r}b^{q_r}$ for some $\ell \geq 0$, $r\geq 2$, $p_i,q_i\geq 2$ for $i\in \{1, \dots, r\}$. Suppose that $q_r\geq 3$ and $p_k \geq 3$ for some $1\leq k < r$ 
and that $K=\widehat{\gamma}$ is a knot. Then
\begin{align*}
\upsilon(K)&=-\frac{\sum\limits_{i=1}^r (p_i + q_i) }{2}+r-2\ell.
\end{align*}
\end{lem}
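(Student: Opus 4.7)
The upper bound $\upsilon(K) \leq -\Sigma_\gamma/2 + r - 2\ell$, where $\Sigma_\gamma \coloneqq \sum_{i=1}^r (p_i+q_i)$, is immediate from \Cref{lem:Upsilon1_Delta2l_upperBound}. For the matching lower bound, my plan is to mimic the proof of \Cref{lem:Upsilon1_Delta2lCase1} as closely as possible: form the cyclic conjugate $\gamma_1 = \Delta^{2\ell} b a^{p_1} b^{q_1} \cdots a^{p_r} b^{q_r-1}$, rearrange $\alpha \coloneqq a^{2r}\gamma_1$ to $\alpha_1 = \Delta^{2\ell} b a^{2r} b a^{p_1} b^{q_1} \cdots a^{p_r} b^{q_r-2}$, and use the two-saddle genus-$1$ cobordism between $\widehat{\alpha_1}$ and $K\#T_{2,2r+1}$ (exactly as in \Cref{ex:introstrategy}) to reduce the problem to showing $\upsilon(\widehat{\alpha_1}) \geq -\Sigma_\gamma/2 - 2\ell + 1$. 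Combined with \eqref{eq:strategy} and $\upsilon(T_{3,3(\ell+r)+1})=-2(\ell+r)$ from \Cref{lem:Upsilontorus}, it thus suffices to exhibit a cobordism of genus $\Sigma_\gamma/2 - 2r - 1$ between $\widehat{\alpha_1}$ and the torus knot $T=T_{3,3(\ell+r)+1}$.

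The new obstacle compared to \Cref{lem:Upsilon1_Delta2lCase1} is that the target torus braid used there, $\beta = \Delta^{2\ell} b a^{2r} b (a^2 b^2)^{r-1} a^3 b$, cannot be reached from $\alpha_1$ via generator deletions when $p_r = 2$: the final $a$-block of $\alpha_1$ would need to be \emph{enlarged} to $a^3$, not reduced. The plan is to exploit the hypothesis $p_k\geq 3$ for some $k<r$ by targeting a cyclically rewritten representative $\tilde\beta_k$ of the same torus knot $T$ in which the distinguished $a^3$-block sits at position $k$ instead of the end. Explicitly, using cyclic conjugation of $\beta$ together with the braid relation $aba=bab$ and the conjugation rule $\Delta a = b\Delta$, one aims to produce
\[
\tilde\beta_k = \Delta^{2\ell} b a^{2r} b (a^2 b^2)^{k-1} a^3 b^2 (a^2 b^2)^{r-k-1} a^2 b
\]
(or a close variant), whose block structure matches that of $\alpha_1$ position by position, so that the cobordism $C$ from $\widehat{\alpha_1}$ to $\widehat{\tilde\beta_k}=T$ is realized by the deletions $a^{p_i}\to a^2$ for $i\neq k$, $a^{p_k}\to a^3$, $b^{q_i}\to b^2$ for $i<r$, and $b^{q_r-2}\to b^1$.

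A direct count yields $\Sigma_\gamma - 4r - 2$ saddle moves, so by the Euler characteristic argument used in \Cref{lem:upsilonUpperBound} and \Cref{lem:Upsilon1_Delta2lCase1}, the cobordism $C$ is connected with two knot boundary components and has genus $\Sigma_\gamma/2 - 2r - 1$, as required. Plugging into \eqref{eq:strategy} and combining with the genus-$1$ reduction above gives $\upsilon(K)\geq -\Sigma_\gamma/2 + r - 2\ell$, matching the upper bound.

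The hard step I anticipate is the braid-word bookkeeping needed to pin down $\tilde\beta_k$: the braid relations do not generically permit one to shift an $a^3$-block arbitrarily inside a torus-braid word, so writing out $\tilde\beta_k$ explicitly may require either a preliminary cyclic rotation of $\alpha_1$ (so that its block structure matches a rotation of $\beta$ in which $a^3$ naturally sits at position $k$), or insertion and cancellation of an auxiliary pair of saddles. Once the identity $\widehat{\tilde\beta_k}=T$ is verified (which boils down to the core identity $b a^{2r} b (a^2 b^2)^{r-1} a^2 = \Delta^{2r}$ used in the proof of \Cref{lem:Upsilon1_Delta2lCase1}, applied after a suitable cyclic conjugation), everything else is formal.
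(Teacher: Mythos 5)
Your setup — the conjugation to $\gamma_1$, the passage to $\alpha_1$, the genus-$1$ reduction to bounding $\upsilon(\widehat{\alpha_1})$, the target word $\tilde\beta_k = \Delta^{2\ell} b a^{2r} b (a^2 b^2)^{k-1} a^3 b^2 (a^2 b^2)^{r-k-1} a^2 b$, and the saddle count $\Sigma_\gamma - 4r - 2$ — all coincide with the paper's proof. But the step you defer as ``the hard step,'' namely verifying $\widehat{\tilde\beta_k}=T_{3,3(\ell+r)+1}$, is not merely hard: it is false. The identity $b a^{2r} b (a^2b^2)^{r-1} a^2 = \Delta^{2r}$ used in \Cref{lem:Upsilon1_Delta2lCase1} works there because the surplus generator $a$ sits in the \emph{last} syllable, so $\beta$ splits as $\Delta^{2\ell}\cdot\Delta^{2r}\cdot ab$; when the surplus $a$ sits at position $k<r$, no cyclic conjugation restores this splitting. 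Concretely, for $\ell=0$, $r=2$, $k=1$ one has $\tilde\beta_1 = ba^4ba^3b^2a^2b \sim \Delta(a^2b^2)^2a^3$, and a reduced Burau computation at $t=-1$ gives $\det\bigl(\widehat{\tilde\beta_1}\bigr)=9$, whereas $\det(T_{3,7})=1$; the two knots are distinct. More structurally, $\tilde\beta_k$ is conjugate to $\Delta^{2(\ell+k-1)+1}(a^2b^2)^{r-k+1}a^{2r-2k+1}$, which is in Garside normal form \eqref{eq:oddpower} with several syllables and hence (by the uniqueness of the normal form) not conjugate to $\Delta^{2(\ell+r)}ab$.

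The repair is exactly the conjugation just stated: put $\tilde\beta_k$ into the form $\beta_1=\Delta^{2(\ell+k-1)+1}(a^2b^2)^{r-k+1}a^{2r-2k+1}$, note that $\ell+k-1\geq 0$, $r-k+1\geq 2$ and $2r-2k+1\geq 3$, and apply the already-established \Cref{lem:Upsilon1_Delta2l+1} to compute $\upsilon(\widehat{\tilde\beta_k})=\upsilon(\widehat{\beta_1})=-2(\ell+r)$. The numerical value happens to agree with $\upsilon(T_{3,3(\ell+r)+1})$, which is why the rest of your argument then goes through verbatim; but the justification must come from \Cref{lem:Upsilon1_Delta2l+1}, not from an identification with a torus knot.
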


\begin{proof}
We proceed similar as in the proof of \Cref{lem:Upsilon1_Delta2lCase1}, but here we will look at a different cobordism to obtain a lower bound for $\upsilon\left(\widehat{\alpha_1}\right)$.
The steps of the proof are exactly the same until then, so we consider
\begin{align*}
\gamma =\Delta^{2\ell}a^{p_1}b^{q_1}\cdots a^{p_r}b^{q_r} \sim  \Delta^{2\ell}ba^{p_1}b^{q_1}\cdots a^{p_r}b^{q_r-1}=:\gamma_1
\end{align*}
and define 
\begin{align*}
\alpha := a^{2r} \gamma_1
\sim 
\Delta^{2\ell} b a^{2r}b a^{p_1}b^{q_1}\cdots a^{p_r}b^{q_r-2}=:\alpha_1.
\end{align*}
Again, we have
$\upsilon(K) \geq \upsilon(\widehat{\alpha_1}) +r -1.
$ 
Now, in order to find a lower bound for $\upsilon(\widehat{\alpha_1})$,
we observe that there is a cobordism $C$ between $\widehat{\alpha_1}$ and the knot $\widehat{\beta}$, where 
\begin{align*}
\beta  = \Delta^{2\ell} b a^{2r}b (a^2b^2)^{k-1} a^3 b^2(a^2b^2)^{r-k-1}a^2b.
\end{align*}
We find the cobordism $C$ by the deletion of generators from the braid word $\beta$ to obtain $\alpha_1$, where we use the assumptions $q_r \geq 3$ and $p_k \geq 3$.
In fact, the cobordism can be realized by
\begin{align*}
&p_1-2+q_1-2+\dots+p_{k-1}-2+q_{k-1}-2+p_k-3+q_k-2\\ &\quad +p_{k+1}-2+q_{k+1}-2+\dots+p_{r-1}-2+q_{r-1}-2+p_r-2+q_r-3 \\
&=
 \Sigma_\gamma -4r-2
\end{align*}
saddle moves, so its genus is $g(C)=\frac{\Sigma_\gamma }{2}-2r-1$.
Using $a^{2k-1}b (a^2b^2)^{k-1} =(ab)^{3k-2}$ 
by \Cref{lem:braideq3}, we have 
\begin{align*}
\beta &= \Delta^{2\ell} b a^{2r-2k+1} (ab)^{3k-2} a^3 b^2(a^2b^2)^{r-k-1}a^2b\\&= \Delta^{2\ell} b a^{2r-2k+1} \Delta^{2(k-1)}ab a^3 b^2(a^2b^2)^{r-k-1}a^2b\\
&\sim\Delta^{2(\ell+k-1)} \Delta a^2 b^2(a^2b^2)^{r-k-1}a^2b^2 a^{2r-2k+1} \\
&= \Delta^{2(\ell+k-1)+1}  (a^2b^2)^{r-k+1} a^{2r-2k+1}=:\beta_1.
\end{align*}
Note that by our assumptions on $\ell$, $r$ and $k$, we have $\ell+k-1 \geq 0$, $r-k+1 \geq 2$ and $2r-2k+1 \geq 3$, so $\beta_1$ has the form of the braid words considered in \Cref{lem:Upsilon1_Delta2l+1}. We thus have
\begin{align*}
\upsilon\left(\widehat{\beta}\right)&=\upsilon\left(\widehat{\beta_1}\right) = - \frac{4(r-k+1)+2r-2k+1}{2}+\left(r-k+2\right)-2\left(\ell+k-1\right) -\frac{3}{2}\\
&
=-2(\ell+r).
\end{align*}
By \eqref{eq:strategy}, we have 
\begin{align*}
\upsilon(\widehat{\alpha_1}) &\geq \upsilon\left( \widehat{\beta} \right) - g(C) 
= -\frac{\Sigma_\gamma }{2}-2\ell+1, \qquad \text{hence}\\
\upsilon(K) &\geq 
\upsilon\left(\widehat{\alpha_1}\right) +r -1 \geq 
-\frac{\Sigma_\gamma }{2}+r-2\ell.\qedhere
\end{align*}
\end{proof}

\begin{proof}[{Proof of \Cref{prop:posupsilon}}]
The first case of \Cref{prop:posupsilon} follows from \Cref{rem:toruscasenormalform} and \Cref{lem:Upsilontorus}.
\Cref{lem:Upsilon1_Delta2lCase1} and \ref{lem:Upsilon1_Delta2lCase2} together 
prove the second case,
\Cref{lem:Upsilon1_Delta2l+1} proves the third case. 
Note that up to conjugation, by \Cref{rem:avsb} and the remarks in \Cref{prop:normalform}, it is no restriction to assume that $p_r \geq 3$ in \Cref{lem:Upsilon1_Delta2l+1} and 
that $q_r \geq 3$ and either
$p_r \geq 3$ or $p_k \geq 3$ for some $1 \leq k < r$ in \Cref{lem:Upsilon1_Delta2lCase1} and \ref{lem:Upsilon1_Delta2lCase2}, respectively.
\end{proof}

Before we proceed with the general case where the knot $K$ is given as the closure of any $3$-braid, let us prove the following corollaries of our results in this section.

\begin{cor}[\Cref{cor:switches}]\label{cor:switches2}
Let $K$ be a knot that is the closure of a positive $3$-braid. Then
\begin{align*}
r = g(K) + \upsilon(K)+1
\end{align*}
is minimal among all integers $r \geq 1$ such that $K$ is the closure of a positive $3$-braid 
$ a^{p_1}b^{q_1}\cdots a^{p_r}b^{q_r}$ for integers $p_i,q_i \geq 1$, $i \in \{1, \dots, r\}$.
\end{cor}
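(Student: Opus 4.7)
The plan is to establish matching lower and upper bounds on the minimal number $r$ of $ab$-block pairs in a positive 3-braid representative of $K$.

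For the lower bound, I would directly invoke \Cref{lem:upsilonUpperBound} at $t = 1$: if $K$ is the closure of a positive 3-braid word $a^{p_1}b^{q_1}\cdots a^{p_r}b^{q_r}$, then that inequality specializes to
\[
\upsilon(K) \,\leq\, -g(K) + r - 1,
\]
so $r \geq g(K) + \upsilon(K) + 1$. Since this applies to every such representation, the minimum of $r$ over all such representations is at least $g(K) + \upsilon(K) + 1$.

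For the matching upper bound, I would use \Cref{prop:normalform} to write $K = \widehat{\gamma}$ for some $\gamma$ in Garside normal form \eqref{eq:torusknotcase}, \eqref{eq:evenpower}, or \eqref{eq:oddpower} with $\ell \geq 0$, and then rewrite $\gamma$ into the desired alternating-block form. In case \eqref{eq:evenpower}, with $\gamma = \Delta^{2\ell}a^{p_1}b^{q_1}\cdots a^{p_r}b^{q_r}$ and $\ell \geq 1$, I would first conjugate $\gamma$ by $b^{q_r}$, using centrality of $\Delta^2$, to obtain $\gamma \sim \Delta^{2\ell} b^{q_r} a^{p_1}b^{q_1}\cdots a^{p_r}$; then substitute the identity
\[
\Delta^{2\ell} \,=\, a^{2\ell}b(a^2b^2)^{\ell-1}a^2 b,
\]
which follows from \Cref{lem:braideq2} (with $n = \ell$) by right-multiplication by $ab$; and finally cyclically rotate so that the trailing $a^{p_r}$ merges with the leading $a^{2\ell}$. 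Counting the blocks of the resulting word yields exactly $\ell + r$ pairs, which by \Cref{prop:posupsilon} combined with \Cref{eq:sliceBennequin} equals $g(K)+\upsilon(K)+1$. Case \eqref{eq:oddpower} would be handled by the same strategy, now using the identity $\Delta^{2\ell+1} = a^{2\ell+1}b(a^2b^2)^\ell a$ (which follows from \Cref{lem:braideq3} by right-multiplication by $a$) together with a cyclic rotation that merges the trailing $a^{p_r}$ with the leading $a^{2\ell+1}$. In case \eqref{eq:torusknotcase}, where $K = T_{3,3\ell+k}$ for $k \in \{1,2\}$, the same two identities directly produce the representatives $a^{2\ell+1}b(a^2b^2)^\ell$ (for $k=1$) or $a^{2\ell+3}b(a^2b^2)^\ell$ (for $k=2$), each with $\ell+1$ block-pairs equal to $g(K)+\upsilon(K)+1$. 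The edge case $\ell=0$ in \eqref{eq:evenpower} and \eqref{eq:oddpower} is immediate and handled separately.

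The main obstacle I anticipate is simply the block-counting bookkeeping. I need to check that in each case the two intended merges (one for an adjacent pair of $b$-blocks at the substitution seam, one for an adjacent pair of $a$-blocks at the cyclic seam) really take place and reduce the naive count by exactly two, so that the final count matches the arithmetic identity $g(K)+\upsilon(K)+1 = \ell+r$ (resp.\ $\ell+1$) derived from \Cref{prop:posupsilon} and \Cref{eq:sliceBennequin}. No individual case is difficult, but keeping the indices straight across all three normal forms simultaneously is the step most prone to error.
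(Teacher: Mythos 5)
Your proposal is correct and follows essentially the same route as the paper: the lower bound comes from \Cref{lem:upsilonUpperBound} at $t=1$, and the upper bound from explicit block-form representatives extracted from the Garside normal form via \Cref{lem:braideq2} and \Cref{lem:braideq3}, with the count identified as $r+\ell$ (resp.\ $\ell+1$) using \Cref{prop:posupsilon} and \Cref{eq:sliceBennequin}. The paper reuses the representatives already produced in the proofs of \Cref{lem:Upsilon1_Delta2l_upperBound} and \Cref{lem:Upsilon1_Delta2l+1Upper} rather than re-deriving them, but your variants merge at the seams correctly and yield the same block counts.
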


\begin{proof}
By \Cref{lem:upsilonUpperBound} we have
\begin{align*}
\upsilon(K)
 \leq 
-g(K)+r-1 \quad \Longleftrightarrow \quad g(K) + \upsilon(K) +1 \leq r 
\end{align*} 
whenever $K$ is the closure of a positive $3$-braid $a^{p_1}b^{q_1}\cdots a^{p_r}b^{q_r}$ for integers $r\geq 1$, $p_i,q_i \geq 1$, $i \in \{1, \dots, r\}$.
It remains to show that we can always find a positive braid representative for $K$ of the form $a^{p_1}b^{q_1}\cdots a^{p_r}b^{q_r}$ with $r = g(K) + \upsilon(K) +1$. We will use \Cref{prop:normalform}.
In fact, if $K$ is the closure of a positive braid $\gamma$ of the form in \eqref{eq:evenpower} with $\ell \geq 0$, then $g \left(K\right) +\upsilon\left(K\right) +1 = r + \ell$ by 
\Cref{eq:sliceBennequin} applied to $\gamma$, \Cref{lem:Upsilon1_Delta2lCase1} and \Cref{lem:Upsilon1_Delta2lCase2}.
Moreover, we have
\begin{align*}
\gamma &= a^{p_1}b^{q_1}\cdots a^{p_{r}}b^{q_{r}} \qquad \text{if } \ell = 0 \qquad \text{and}\\
\gamma &\sim  a^{2\ell}b(a^2b^2)^{\ell-1}a^{p_1+2}b^{q_1}\cdots a^{p_r}b^{q_r+1} \qquad \text{if } \ell \geq 1
\end{align*}
by the proof of \Cref{lem:Upsilon1_Delta2l_upperBound};
these give the desired braid representatives for $K$.
Furthermore, if $K$ is represented by a positive braid $\gamma$ of the form in \eqref{eq:oddpower} with $\ell \geq 0$, then $g \left(K\right) +\upsilon\left(K\right) +1 = r + \ell$ by \Cref{eq:sliceBennequin} and  \Cref{lem:Upsilon1_Delta2l+1}, and we have
\begin{align*}
 \gamma &\sim a^{p_1+1}b^{q_1}\cdots a^{p_{r-1}}b^{q_{r-1}}a^{p_r+1}b \qquad \text{if } \ell = 0 \qquad \text{and}\\
\gamma & \sim a^{p_r+2}ba^3(aba^3)^{\ell-1}ba^{p_1+\ell+1}b^{q_1}\cdots a^{p_{r-1}}b^{q_{r-1}} \qquad \text{if } \ell \geq 1
\end{align*}
by the proof of \Cref{lem:Upsilon1_Delta2l+1Upper}.
Finally, if $K= T_{3,3\ell+k}$ for $\ell \geq 0$ and $k \in \{1,2\}$, then by \Cref{eq:tauTorus} and \Cref{lem:Upsilontorus}, we have
$ g(K) + \upsilon(K) +1
=\ell+1$
and $T_{3,3\ell+1}$ and $T_{3,3\ell+2}$ are represented by the positive $3$-braids
$
(ab)^{3\ell+1} =a^{2\ell+1}b\left(a^2b^2\right)^{\ell}$ and $
(ab)^{3\ell+2}\sim a^{2\ell+3 }b(a^2b^2)^{\ell } ,
$
respectively, by \Cref{lem:braideq3} and \Cref{lem:braideq2}.
\end{proof}

\begin{cor}[\Cref{cor:conc}]
If $K$ and $J$ are concordant knots that are both closures of positive $3$-braids, then
the minimal $r$ from \Cref{cor:switches2} is the same for both $K$ and $J$.
\end{cor}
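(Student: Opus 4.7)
The plan is to reduce the statement to two concordance-invariance facts: that $\upsilon$ is a concordance invariant, and that on positive braid knots the $3$-genus coincides with the $4$-genus.

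Concretely, I would start by applying \Cref{cor:switches2} to both knots: the minimal $r$ for $K$ equals $g(K)+\upsilon(K)+1$ and the minimal $r$ for $J$ equals $g(J)+\upsilon(J)+1$, so it suffices to prove $g(K)+\upsilon(K)=g(J)+\upsilon(J)$. The upsilon contribution is immediate, since $\upsilon$ descends to a homomorphism $\CC \to \Z$ (as recalled in \Cref{sec:upsilonprops}), so $\upsilon(K)=\upsilon(J)$ whenever $K$ and $J$ are concordant.

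The only point that requires a moment's thought is the genus: the $3$-genus is not a concordance invariant in general, but the $4$-genus is. I would verify the latter directly: given a concordance annulus $A \subset S^3 \times [0,1]$ from $K$ to $J$ and a minimal-genus surface $\Sigma \subset B^4$ bounded by $J$, capping off along $S^3 \times \{1\}$ — using $(S^3 \times [0,1]) \cup_{S^3 \times \{1\}} B^4 \cong B^4$ — produces a surface of genus $g_4(J)$ in $B^4$ bounded by $K$, so $g_4(K) \leq g_4(J)$; symmetry yields equality. Because $K$ and $J$ are both closures of positive $3$-braids, \Cref{eq:sliceBennequin} then gives $g(K) = g_4(K) = g_4(J) = g(J)$.

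Combining these equalities finishes the argument. I do not expect any serious obstacle: the key non-formal ingredient is the identity $g=g_4$ for positive braid knots, already invoked as \Cref{eq:sliceBennequin} via work of Bennequin, Rudolph, and Kronheimer--Mrowka; everything else is routine concordance-invariance bookkeeping built on \Cref{cor:switches2}.
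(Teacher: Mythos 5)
Your proposal is correct and follows essentially the same route as the paper: invoke \Cref{cor:switches2} to reduce to $g(K)+\upsilon(K)=g(J)+\upsilon(J)$, use that $\upsilon$ and $g_4$ are concordance invariants, and use \Cref{eq:sliceBennequin} to identify $g$ with $g_4$ for positive braid knots. The only difference is that you spell out the standard capping-off argument for the concordance invariance of $g_4$, which the paper takes as known.
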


\begin{proof}
If $K$ and $J$ are concordant, then 
their $4$-genus and their upsilon invariants are equal. So by \Cref{eq:sliceBennequin} from \Cref{sec:braids} and by \Cref{cor:switches2}, 
positive $3$-braids with closures $K$ and $J$, respectively, 
 will have the same minimal $r$. 
\end{proof}

\begin{rem}\label{rem:Ag}
Let $\mathcal{A}_g(K)$ denote the minimal genus of a cobordism between a knot $K$ and an alternating knot, \ie the cobordism distance $d\left(K, \{\text{alternating knots}\}\right)$. By \cite[Theorem 8]{friedllivingstonzentner}, we have 
$
\frac{\left\vert \tau(K) + \upsilon(K) \right \vert}{2} \leq
\mathcal{A}_g(K) 
$
for any knot $K$.
It thus follows from our results in this section that
\begin{align*}
\frac{r+\ell-1}{2} \leq \mathcal{A}_g(K) \leq \frac{r+\ell-1+\varepsilon}{2}
\end{align*}
for any knot $K$ that is the closure of a positive $3$-braid in Garside normal form \eqref{eq:evenpower} or \eqref{eq:oddpower}, where $\varepsilon \geq 0$ is an integer depending on $K$. The lower bound uses
\Cref{prop:posupsilon} and \Cref{eq:tauGamma} from \Cref{sec:upsilonprops}; 
see also the proof of \Cref{cor:switches2}. The upper bound follows from
the proofs of \Cref{lem:Upsilon1_Delta2l+1Upper} and \Cref{lem:Upsilon1_Delta2l_upperBound}, see also the proof of \Cref{lem:upsilonUpperBound}.
Note that for most positive $3$-braid knots, we have $\varepsilon > 0$, so we do not get an equality.

A shorter proof of 
\Cref{lem:upsilonUpperBound} without cobordisms follows from a result of Abe and Kishimoto on the dealternating number of positive $3$-braid knots. 
 Indeed, we have
\begin{align*}
\left \vert \Upsilon_K(t)  + g(K)t \right \vert &\overset{\eqref{eq:tauGamma}}{=} \left \vert \Upsilon_K(t)  + \tau(K)t \right \vert 
\leq \alt(K) t\overset{\eqref{eq:altdalt}}{\leq} \dalt(K)t \\&\overset{\eqref{upperbounddalt}}{\leq} 
\left(r-1\right) t
\qquad
\text{for all }0 \leq t \leq 1.
\end{align*}
The definitions of the dealternating number $\dalt(K)$ and the alternation number $\alt(K)$ of a knot $K$ and more details on the inequalities used here will be provided in \Cref{sec:alternation}.\end{rem}

\subsection{Proof of \texorpdfstring{\Cref{thm:upsilon}}{Theorem 1.1}}\label{sec:upsilongeneral}

It remains to show \Cref{thm:upsilon} when $K$ is the closure of a not necessarily positive $3$-braid. We first recall a result of Murasugi, which implies that indeed all $3$-braid knots except for the torus knots of braid index $3$ are covered by \Cref{thm:upsilon}. 

Let $\gamma$ be a $3$-braid. Then, by \cite[Proposition 2.1]{murasugibook}, $\gamma$ is conjugate to one and only one of the $3$-braids
\begin{align}
&\Delta^{2\ell}a^p \quad \text{or}\quad  \Delta^{2\ell+1} & \text{for }  \ell \in \Z, \,p \in \Z,\tag{a}\label{eq:case1} \\
&\Delta^{2\ell}ab \quad \text{or}\quad \Delta^{2\ell}(ab)^2  & \text{for }  \ell \in \Z,\tag{b}\label{eq:case2} \\
&\Delta^{2\ell}a^{-p_1}b^{q_1}\cdots a^{-p_r}b^{q_r} & \text{for } \ell \in \Z, \,r\geq 1,\, p_i,q_i\geq 1, \,i \in \{ 1, \dots, r\}.\tag{c}\label{eq:case3} 
\end{align}

\begin{defn}\label{def:murasuginormalform}
We call a braid word of the form in \eqref{eq:case1}--\eqref{eq:case3} 
 a \emph{$3$-braid in Murasugi normal form}.
\end{defn}

\begin{rem}
The closures of the $3$-braids in Murasugi normal form \eqref{eq:case1} are links of two (if $p$ is odd) or three components and the closures of the $3$-braids in Murasugi normal form \eqref{eq:case2} are the torus knots of braid index $3$ (cf.~\Cref{rem:toruscasenormalform}).
\end{rem}

If $\ell = 0$ in case \eqref{eq:case3}, 
the braid word $\gamma = a^{-p_1}b^{q_1}\cdots a^{-p_r}b^{q_r}$ for integers $r\geq 1$ and $p_i,q_i\geq 1$, $i \in \{ 1, \dots, r\}$, gives rise to an alternating braid diagram. If $K = \widehat{\gamma}$ is a knot, by \Cref{prop:alternating} we thus have $\upsilon(K) = \frac{\sigma(K)}{2}$ in that case and the statement of \Cref{thm:upsilon} follows directly from a result by Erle on the signature of $3$-braid knots.

\begin{prop}[{\cite[Theorem 2.6]{erle}}]\label{prop:signature}
Let $\gamma = \Delta^{2\ell}a^{-p_1}b^{q_1}\cdots a^{-p_r}b^{q_r}$ for integers $\ell\in \Z$, $r\geq 1$ and $p_i,q_i\geq 1$ for $i\in \{1, \dots, r\}$
such that $K=\widehat{\gamma}$ is a knot. Then 
\begin{align*}
\sigma(K)&=\sum\limits_{i=1}^r (p_i - q_i) -4\ell.
\end{align*}
\end{prop}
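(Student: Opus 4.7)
The plan is to reduce the general case to the alternating case via an induction on $\ell$, mirroring the strategy used elsewhere in the paper for $\upsilon$: establish the statement for $\ell = 0$ and then show that replacing $\gamma$ by $\Delta^2\gamma$ shifts the signature by $-4$.

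First I would handle the base case $\ell = 0$. As noted in the paragraph preceding the proposition, the word $\gamma_0 = a^{-p_1}b^{q_1}\cdots a^{-p_r}b^{q_r}$ has an alternating sign pattern and the $a$- and $b$-blocks act on disjoint pairs of strands, so its closure admits an alternating diagram. For alternating knots, the signature is computable from any reduced alternating diagram --- for example via the Gordon--Litherland formula applied to a checkerboard-colored spanning surface, or equivalently from the Seifert matrix of the Seifert-algorithm surface (which realizes the $3$-genus by Murasugi/Crowell). In our case that Seifert matrix is a tridiagonal block matrix whose block structure mirrors the word $a^{-p_1}b^{q_1}\cdots a^{-p_r}b^{q_r}$, with the $a^{-p_i}$-blocks contributing one sign and the $b^{q_i}$-blocks contributing the opposite sign. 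I would compute the signature by induction on $r$, peeling off one pair $a^{-p_i}b^{q_i}$ at a time and using Sylvester's law for how the signature of a symmetric tridiagonal matrix changes under such an extension; each block contributes $p_i - q_i$, giving $\sigma(\widehat{\gamma_0}) = \sum_{i=1}^r (p_i - q_i)$.

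Second, I would establish the recursion $\sigma(\widehat{\Delta^2\alpha}) = \sigma(\widehat{\alpha}) - 4$ for every $3$-braid $\alpha$ whose closure is a knot; iterating in both directions of $\ell$ then yields the full formula. To prove the recursion, I would Murasugi-sum the Seifert-algorithm surface of $\widehat{\alpha}$ with a fixed model surface for the positive full twist $\Delta^2$. By additivity of the Seifert form under Murasugi sum, the Seifert matrix of $\widehat{\Delta^2\alpha}$ is block-diagonal with one block from $\alpha$ and one from $\Delta^2$, and the latter block is an explicit small symmetric matrix whose signature is $-4$.

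The main obstacle is Step~2: verifying that the jump under $\Delta^2$ is \emph{exactly} $-4$ and is independent of $\alpha$. Careful book-keeping is required to ensure that no off-diagonal interaction terms appear in the Murasugi sum and to perform the $\Delta^2$-block signature calculation cleanly. A more elegant alternative --- essentially the route taken by Erle --- is to work with the Gordon--Litherland/Goeritz form attached to a checkerboard coloring of the closed-braid diagram: inserting $\Delta^2$ corresponds to a small, explicit tridiagonal enlargement of the Goeritz matrix whose signature defect is visibly $-4$, sidestepping the Murasugi-sum bookkeeping entirely. Yet another route available in the spirit of this paper is to pick reference knots (the torus knots $T_{3,3n+k}$, whose signatures are known) and propagate the formula by cobordism arguments of controlled genus, using Murasugi's conjugacy classification to cover all $3$-braid knots.
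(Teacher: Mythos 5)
First, a point of comparison: the paper does not prove this proposition at all --- it is quoted directly from Erle \cite{erle}, so there is no in-paper argument to measure your attempt against. Judged on its own, your base case $\ell=0$ is sound in outline (the closure of $a^{-p_1}b^{q_1}\cdots a^{-p_r}b^{q_r}$ is alternating, and a Goeritz/Gordon--Litherland or Seifert-matrix computation does yield $\sigma=\sum_i(p_i-q_i)$), but Step~2 contains a genuine error. The recursion $\sigma\bigl(\widehat{\Delta^2\alpha}\bigr)=\sigma\bigl(\widehat{\alpha}\bigr)-4$ is \emph{false} for general $3$-braids $\alpha$ with knot closure: for $\alpha=ab$ the closure is the unknot with $\sigma=0$, while $\widehat{\Delta^2 ab}=\widehat{(ab)^4}=T_{3,4}$ has $\sigma=-6$; and passing from $T_{3,4}$ to $\widehat{\Delta^2(ab)^4}=T_{3,7}$ changes the signature by only $-2$. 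The jump under a full twist thus oscillates (here between $-6$ and $-2$) rather than being the constant $-4$ your induction requires. Consequently, any mechanism that would establish the recursion uniformly in $\alpha$ --- in particular your claim that the Seifert matrix of $\widehat{\Delta^2\alpha}$ is block-diagonal by additivity under Murasugi sum --- cannot be correct. Inserting the full twist $\Delta^2$ into a braid word is not a Murasugi sum: $\Delta^2$ involves all three strands, its fiber surface meets the surface of the remaining word in more than a disk, and the resulting off-diagonal interaction terms are exactly what make the signature jump depend on $\alpha$. The same objection applies to the ``tridiagonal enlargement of the Goeritz matrix'' variant, since both the interaction entries and the Gordon--Litherland correction term change with $\alpha$.

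The fact that the jump \emph{is} exactly $-4$ on the specific family $\Delta^{2\ell}a^{-p_1}b^{q_1}\cdots a^{-p_r}b^{q_r}$ is precisely the nontrivial content of Erle's theorem (equivalently, of the identity $\sigma=2\upsilon$ on this family; cf.~the discussion in \Cref{rem:upsilonsignature}), and it genuinely uses the structure of the syllable pattern with strictly negative $a$-powers and strictly positive $b$-powers; it cannot be extracted from a formal additivity statement. To repair the argument you would either have to carry out Erle's explicit diagonalization of a symmetric (Goeritz-type) matrix for the entire word $\Delta^{2\ell}a^{-p_1}b^{q_1}\cdots a^{-p_r}b^{q_r}$ at once, tracking how the $\Delta^{2\ell}$ part deforms the quadratic form of the alternating part, or else run the paper's cobordism strategy with $\sigma$ in place of $\upsilon$, using $\left|\sigma(K)-\sigma(J)\right|\leq 2g(C)$ together with the known signatures of torus knots of braid index $2$ and $3$ --- being careful in the latter approach about the exceptional torus knots where $\sigma\neq 2\upsilon$, which is exactly where a naive propagation of constant jumps breaks down.
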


We still need to show \Cref{thm:upsilon} when $K$ is the closure of a $3$-braid in Murasugi normal form \eqref{eq:case3} 
with $\ell \neq 0$. The proof will follow from the following two lemmas.

\begin{lem}\label{lem:Murasugiupper}
Let $\gamma = \Delta^{2\ell}a^{-p_1}b^{q_1}\cdots a^{-p_r}b^{q_r}$ for some $\ell\geq 1,r \geq 1$ and $p_i,q_i\geq 1$ for $i\in \{1, \dots, r\}$ such that $K=\widehat{\gamma}$ is a knot. Then
\begin{align*}
\Upsilon_K(t) &\leq  \left(\frac{\sum\limits_{i=1}^r (p_i - q_i) }{2}-2\ell\right) t
\qquad \text{for all }0\leq t \leq 1.
\end{align*}
\end{lem}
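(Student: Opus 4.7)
The plan is to mimic the strategy used in \Cref{lem:upsilonUpperBound}: construct an explicit cobordism from $K = \widehat\gamma$ to a connected sum $J$ of torus knots with computable upsilon, then invoke the cobordism bound \eqref{eq:strategy}. The main difference from the positive case is that the $a^{-p_i}$-blocks should contribute negatively-oriented summands $-T_{2,\cdot}$ to $J$ (the mirror orientation accounting for the sign of the crossings), while the central full twist $\Delta^{2\ell}$ should contribute a $T_{3, 3\ell+\delta}$ summand for some $\delta\in\{1,2\}$.

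Concretely, I would aim for the target
\[
J_\varepsilon = T_{3, 3\ell + \delta} \mathbin{\#} T_{2, q_1 + \varepsilon_1} \mathbin{\#} \cdots \mathbin{\#} T_{2, q_r + \varepsilon_r} \mathbin{\#} (-T_{2, p_1 + \varepsilon'_1}) \mathbin{\#} \cdots \mathbin{\#} (-T_{2, p_r + \varepsilon'_r}),
\]
with parities $\delta \in \{1,2\}$ and $\varepsilon_i, \varepsilon'_i \in \{0, 1\}$ chosen so that each summand is a knot. The cobordism would be realized by saddle moves (analogously to \Cref{fig:schematicCob}) that first introduce the parity-correcting generators, then separate the blocks of $\gamma$ at their junctions: each $b^{q_i}$-block yielding a $T_{2, q_i + \varepsilon_i}$ summand, each $a^{-p_i}$-block a $-T_{2, p_i + \varepsilon'_i}$ summand, and the $\Delta^{2\ell}$-block (with one additional crossing for parity) producing the central $T_{3, 3\ell + \delta}$. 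Using \eqref{additivity}, \eqref{mirror}, \Cref{lem:Upsilontorus}, and \Cref{lem:Upsilontorus2}, together with the uniform bound $\Upsilon_{T_{3, 3\ell+k}}(t) \leq \upsilon(T_{3, 3\ell+k}) \cdot t$ on $[0,1]$ (a consequence of the explicit piecewise-linear form of $\Upsilon_{T_{3, 3\ell+k}}$), one bounds $\Upsilon_{J_\varepsilon}(t)$ from above by a linear function of $t$ whose slope should equal $\sum_i(p_i - q_i)/2 - 2\ell$ after accounting for the parity corrections. An Euler characteristic count as in the proof of \Cref{lem:upsilonUpperBound} then yields the genus $g(C)$ of the cobordism, and \eqref{eq:strategy} combined with everything above should give the desired bound.

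The main obstacle is making the Euler characteristic count precise enough to hit the sharp bound on the nose. The naive construction that separates all $2r+1$ summands by one saddle at each junction gives a cobordism of genus roughly $r$, and combining this with $\Upsilon_{J_\varepsilon}$ yields a bound weaker than the target by an additive constant in $r$. To close this gap, the construction will likely need to exploit the algebraic identity $\Delta^2 a^{-1} = babab$ (a consequence of $\Delta a = b\Delta$ and the centrality of $\Delta^2$) to partially absorb the $a^{-p_i}$-blocks into $\Delta^{2\ell}$ before performing saddle moves (in the spirit of the rewriting used in \Cref{lem:Upsilon1_Delta2l+1Upper} and \Cref{lem:Upsilon1_Delta2l_upperBound}), thereby saving one saddle per junction; alternatively, the trick from \Cref{ex:introstrategy} can be applied iteratively, once per $a^{-p_i}$-block, to achieve the same saving.
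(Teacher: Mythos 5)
Your high-level strategy --- build an explicit saddle-move cobordism from $K$ to a connected sum of torus knots and apply \eqref{eq:strategy} --- is the same as the paper's, and your observation that $\Upsilon_{T_{3,3\ell+k}}(t)\leq \upsilon\left(T_{3,3\ell+k}\right)t$ on $[0,1]$ is correct and would be needed if a $T_{3,\cdot}$ summand appeared. The gap is in the choice of target and in the proposed repairs. With your $J_\varepsilon$, every one of the $2r$ syllables of $\gamma$ becomes its own summand, so the separating cobordism costs on the order of $2r$ saddles (genus about $r$), while $\Upsilon_{J_\varepsilon}(1)$ exceeds the desired slope only by a bounded amount; the resulting bound overshoots the target by roughly $r-1$, which is exactly the defect you flag. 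Neither repair closes it: the identity $\Delta^{2}a^{-1}=babab$ consumes an entire full twist to absorb a \emph{single} letter $a^{-1}$, so with only $\ell$ full twists available you cannot absorb the blocks $a^{-p_1},\dots,a^{-p_r}$ (and each application still leaves $a^{-p_i+1}$ behind); and iterating the trick of \Cref{ex:introstrategy} once per $a^{-p_i}$-block costs two saddles per block, i.e.\ genus $r$ again --- it reproduces the count you are trying to beat rather than improving it.

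The paper's proof avoids the $T_{3,\cdot}$ summand altogether. It first uses $\Delta a^{-1}=ab$ once to get $\gamma\sim (ab)^{3(\ell-1)+2}a^{-p_1+1}b^{q_1}\cdots a^{-p_r}b^{q_r+1}$, then applies the identity $(ab)^{3n+2}=b^{n+1}a(b^3ab)^{n-1}b^3ab^3$ to re-express the twist region as a $b$-heavy positive word containing only $\ell$ \emph{isolated} letters $a$. In the resulting word, all positive material (the $b^{q_i}$ together with the $5\ell-1$ extra letters $b$ coming from the unwound twists) is merged by $\ell+r-1$ saddles into a single summand $T_{2,\sum_i q_i+5\ell-1+\varepsilon_q}$, and only the $r$ negative blocks are split off as summands $-T_{2,p_i-\cdots}$; the genus $\tfrac{\ell+r-1+\varepsilon}{2}$ and $\Upsilon_{J_\varepsilon}(1)$ then sum exactly to $\tfrac{1}{2}\sum_i(p_i-q_i)-2\ell$. (The cases $p_1=1$ and $\ell=1$, where these rewritings do not apply verbatim, are treated separately.) So the missing idea is not a per-junction saving but a different decomposition: keep the negative blocks separate and merge \emph{all} positive syllables, including the unwound twist region, into one $T_{2,N}$ summand, rather than keeping them, or a $T_{3,3\ell+\delta}$, as separate summands.
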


\begin{lem}\label{lem:Murasugi}
Let $\gamma = \Delta^{2\ell}a^{-p_1}b^{q_1}\cdots a^{-p_r}b^{q_r}$ for some $\ell \geq 0$, $r\geq 1$ and $p_i,q_i\geq 1$ for $i\in \{1, \dots, r\}$
such that $K=\widehat{\gamma}$ is a knot. Then
\begin{align*}
\upsilon(K)&\geq \frac{\sum\limits_{i=1}^r (p_i - q_i) }{2}-2\ell.
\end{align*}
\end{lem}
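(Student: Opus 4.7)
The proof strategy is by induction on $\ell \geq 0$.

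For the base case $\ell = 0$, the braid word $\gamma = a^{-p_1}b^{q_1}\cdots a^{-p_r}b^{q_r}$ yields an alternating diagram for $K = \widehat{\gamma}$. By \Cref{prop:alternating}, $\upsilon(K) = \sigma(K)/2$, and by \Cref{prop:signature} applied with $\ell = 0$, $\sigma(K) = \sum_{i=1}^r(p_i - q_i)$. Hence $\upsilon(K) = \frac{\sum(p_i - q_i)}{2}$, giving the required inequality with equality.

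For the inductive step $\ell \geq 1$, I would parallel the strategy of \Cref{lem:Upsilon1_Delta2l+1}, \Cref{lem:Upsilon1_Delta2lCase1}, and \Cref{lem:Upsilon1_Delta2lCase2}. After suitable conjugation of $\gamma$ (possibly using \Cref{rem:avsb} to exchange the roles of $a$ and $b$), I would define a modified braid $\alpha \coloneqq \gamma \cdot b^{2r}$ and use the two-saddle-move construction of \Cref{ex:introstrategy} to obtain a genus-$1$ cobordism between $\widehat{\alpha}$ and $K \# T_{2, 2r+1}$, yielding $\upsilon(K) \geq \upsilon(\widehat{\alpha}) + r - 1$. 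Then I would construct a cobordism $C$ from $\widehat{\alpha}$ to a torus knot $T = T_{3, 3m+1}$ (with $m$ depending on $\ell$, $r$, and the $p_i$'s) through a sequence of saddle moves combining both additions and deletions of generators. Computing $g(C)$ via Euler characteristic and applying \eqref{eq:strategy} with $\upsilon(T) = -2m$ from \Cref{lem:Upsilontorus} gives $\upsilon(\widehat{\alpha}) \geq -2m - g(C)$. Combining these estimates should yield $\upsilon(K) \geq \frac{\sum(p_i - q_i)}{2} - 2\ell$.

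The main obstacle is the construction of $C$: in the positive case the cobordism is realized purely by deleting surplus generators from a braid word in order to reach $(ab)^{3m+1}$, but here the presence of the negative letters $a^{-p_i}$ forces us to also perform saddle additions in order to algebraically neutralize the $a^{-1}$'s before any reduction to a positive torus braid is possible. The Euler-characteristic bookkeeping must be done carefully, using identities like \Cref{lem:braideq3} (and their $a\leftrightarrow b$ mirrors), so that the computed genus $g(C)$ matches the upper bound from \Cref{lem:Murasugiupper} exactly; a less careful construction would lose a $\pm 1$ and fail to match the upper bound. The fact that such an exact matching cobordism exists for the Murasugi form --- but would not for analogous bounds on $\tau$ or slice-torus invariants --- is precisely the phenomenon singled out by the author in \Cref{sec:prooftechnique}.
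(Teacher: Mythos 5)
Your base case $\ell=0$ is correct and matches the paper's own observation that this case follows from \Cref{prop:alternating} together with \Cref{prop:signature}. The step for $\ell\geq 1$, however, has a genuine gap, located exactly where you place ``the main obstacle'' --- and it is not a $\pm1$ bookkeeping issue. The target bound contains the term $+\frac{1}{2}\sum_{i=1}^r p_i$ coming from the negative syllables $a^{-p_i}$. In your construction this term would have to be produced by the single cobordism $C$ from $\widehat{\alpha}$ to a torus knot, in which the letters $a^{-1}$ are neutralized by saddle \emph{additions}. But every saddle move, whether it adds or deletes a generator, contributes $+\tfrac{1}{2}$ to $g(C)$, and in the inequality $\upsilon(\widehat{\alpha}) \geq \upsilon(T) - g(C)$ a larger genus only \emph{weakens} the lower bound. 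Neutralizing $a^{-p_i}$ costs on the order of $p_i$ additions, so the resulting estimate falls short of the target by roughly $\sum_i p_i$, which no amount of careful Euler-characteristic accounting can recover. Concretely, for $\gamma=\Delta^{2\ell}a^{-p_1}b^{q_1}$ a cobordism that adds $p_1+1$ letters $a$ and deletes $q_1-1$ letters $b$ to reach $T_{3,3\ell+1}$ gives only $\upsilon(K)\geq -2\ell-\frac{p_1+q_1}{2}$, which is $p_1$ below the claimed $\frac{p_1-q_1}{2}-2\ell$.

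The idea you are missing is that the two-saddle trick of \Cref{ex:introstrategy} must be applied once \emph{per negative syllable}, not once in total. This is what the paper does. After a preliminary rewriting $a^{-p_j}b^{q_j}=\Delta b^{-p_j-1}a^{-1}$ that absorbs the syllables with $q_j=1$ into extra powers of $\Delta$ (needed so that every remaining exponent is $<0$ or $\geq 2$), the paper proves \Cref{lem:technical1} and \Cref{lem:technical2} by iterating over the syllables: whenever an exponent is negative, one multiplies the braid by a suitable even positive power of the relevant generator, say $a^{2m}$, and the genus-$1$ cobordism to the connected sum with $T_{2,2m+1}$ yields a \emph{gain} of $m-1$, \ie about half the absolute value of that negative exponent, rather than a loss. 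After all negative syllables are converted one lands on a positive $3$-braid in Garside normal form, to which \Cref{prop:posupsilon} (respectively \Cref{lem:Upsilon1_Delta2l+1}) applies, and the accumulated gains produce exactly the $+\frac{1}{2}\sum p_i$. Your single application of the trick with $\alpha=\gamma b^{2r}$, followed by one mixed addition/deletion cobordism to a torus knot, cannot reproduce this; also note that your argument is not actually an induction on $\ell$, since no inductive hypothesis is invoked.
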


\begin{proof}[Proof of \Cref{thm:upsilon}]
For $\ell \geq 1$, the statement of the theorem follows directly from \Cref{lem:Murasugiupper} and \Cref{lem:Murasugi}.
If $\ell < 0$, the knot $-K$ is represented by the braid word
$
\Delta^{-2\ell} a^{-q_r}b^{p_r} \cdots a^{-q_1}b^{p_1}
$
with $-\ell \geq 1$ 
and accordingly we have
\begin{align*}
\upsilon\left(-K\right) = \frac{\sum\limits_{i=1}^r (q_i - p_i) }{2}+2\ell.
\end{align*}
Using that $\upsilon(-K) = - \upsilon(K)$ by \Cref{mirror} from \Cref{sec:upsilonprops}, this implies the claim.
\end{proof}

The remainder of this section is devoted to prove the above lemmas.

\begin{proof}[Proof of \Cref{lem:Murasugiupper}]
We first consider the case where $p_1 \geq 2$ and $\ell \geq 2$. 
Using $\Delta a^{-1} =ab$ and 
\begin{align*}
(ab)^{3n +2} =
b^{n+1} a(b^3ab)^{n-1} b^3 a b^3
\qquad \text{for all } n  \geq  1 \quad \text{\cite[Proof of Prop. 22]{Feller_2016}},
\end{align*}
 we have 
\begin{align*}
\gamma &=\Delta^{2\ell}a^{-p_1}b^{q_1}\cdots a^{-p_r}b^{q_r} 
= \Delta^{2(\ell-1)+1}ab a^{-p_1+1}b^{q_1}\cdots a^{-p_r}b^{q_r}\\
&= (ba)^{3(\ell-1)+2} ba^{-p_1+1}b^{q_1}\cdots a^{-p_r}b^{q_r} \sim (ab)^{3(\ell-1)+2}a^{-p_1+1}b^{q_1}\cdots a^{-p_r}b^{q_r+1} 
\\&\sim a(b^3ab)^{\ell-2} b^3 a b^3a^{-p_1+1}b^{q_1}\cdots a^{-p_r}b^{q_r+\ell+1}=: \gamma_1.
\end{align*}
Now, we claim that there is a cobordism $C$ of genus
$
g(C) = \frac{\ell+r-1 + \varepsilon}{2}
$
between the closure $K$ of $\gamma_1$ and the connected sum
\begin{align*}
J_{\varepsilon} =
-T_{2,p_1-1-\varepsilon_1} \,\#\, -T_{2,p_2-\varepsilon_2}\, \# \,\dots\, \#\, -T_{2,p_r-\varepsilon_r}
\# \,T_{2,\sum\limits_{i=1}^r q_i +5\ell-1+\varepsilon_q},
\end{align*}
where we choose $\varepsilon_1,\dots,\varepsilon_r, \varepsilon_q \in \{0,1\}$ such that $J_{\varepsilon}$ is a connected sum of torus knots, \ie such that $\sum_{i=1}^r q_i +5\ell-1+\varepsilon_q$, $p_1-1-\varepsilon_1$, $p_2-\varepsilon_2,\dots,p_r-\varepsilon_r$ are all odd; and $\varepsilon= \varepsilon_q + \sum_{i=1}^r \varepsilon_i$.
This cobordism $C$ can be realized using $\ell+r-1 + \varepsilon$ saddle moves as follows. On the one hand, we add $\sum_{i=1}^r \varepsilon_i$ generators $a$ and $\varepsilon_q$ generators $b$ to the braid word $\gamma_1$, on the other hand, we perform $\ell+r-1$ saddle moves of the form as the $r-1$ saddle moves used in the proof of \Cref{lem:upsilonUpperBound} to get a connected sum of torus knots.
The Euler characteristic of $C$ is $\chi(C) = -\ell-r+1-\varepsilon$. Since $C$ is connected and has two boundary components (as $K$ and $J_\varepsilon$ are knots), the genus of $C$ is $g(C) = \frac{-\chi(C)}{2} = \frac{\ell+r-1 + \varepsilon}{2}$ as claimed.
By \Cref{additivity,lem:Upsilontorus2}, we have
\begin{align*}
\Upsilon_{J_\varepsilon}(t) &
=\left(\frac{\sum\limits_{i=1}^r (p_i-q_i)-\varepsilon-r-5\ell+1}{2}\right)t \qquad \text{for all }0\leq t \leq 1
\end{align*}
and by \eqref{eq:strategy}, we get
\begin{align*}
\Upsilon_K(t) &\leq  \Upsilon_{J_{\varepsilon}}(t)+g (C) t = \left(\frac{\sum\limits_{i=1}^r (p_i - q_i) }{2}-2\ell\right) t
\qquad \text{for all }0\leq t \leq 1.
\end{align*}
If $p_1 \geq 2$ and $\ell = 1$, then 
\begin{align*}
\gamma \sim (ab)^{2}a^{-p_1+1}b^{q_1}\cdots a^{-p_r}b^{q_r+1} \sim ab^2 a^{-p_1+1}b^{q_1}\cdots a^{-p_r}b^{q_r+2} =: \gamma_1,
\end{align*}
and similarly as above, there is a cobordism $C$ of genus $g(C) = \frac{r + \varepsilon}{2}$
between the closure $K$ of $\gamma_1$ and the connected sum
\begin{align*}
J_{\varepsilon} =
-T_{2,p_1-1-\varepsilon_1} \,\#\, -T_{2,p_2-\varepsilon_2}\, \# \,\dots\, \#\, -T_{2,p_r-\varepsilon_r}
\# \,T_{2,\sum\limits_{i=1}^r q_i +4+\varepsilon_q},
\end{align*}
where we choose $\varepsilon_1,\dots,\varepsilon_r, \varepsilon_q \in \{0,1\}$ such that $J_{\varepsilon}$ is a connected sum of torus knots and $\varepsilon= \varepsilon_q + \sum_{i=1}^r \varepsilon_i$. The claim follows also in this case from Equations \eqref{additivity} and \eqref{lem:Upsilontorus2}, and the intequality in \eqref{eq:strategy}.

It remains to show the claim when $p_1=1$.
In that case, using $\Delta a^{-1} = ab$, we 
have
\begin{align*}
\gamma &=\Delta^{2\ell}a^{-1}b^{q_1}\cdots a^{-p_r}b^{q_r} = \Delta^{2\ell-1}ab^{q_1+1}\cdots a^{-p_r}b^{q_r} \sim \Delta^{2\ell-1}b^{q_1+1}\cdots a^{-p_r}b^{q_r+1}. 
\end{align*}
If $\ell =1$, then $\gamma$ is conjugate to
$
\gamma_1 = ab^{q_1+2}a^{-p_2}b^{q_2}\cdots a^{-p_r}b^{q_r+2}
$
and if $\ell \geq 2$, then using \Cref{lem:braideq1} from \Cref{sec:upsilonpos}, we have 
\begin{align*}
\gamma & \sim \Delta^{2(\ell-1)+1}b^{q_1+1}a^{-p_2}b^{q_2}\cdots a^{-p_r}b^{q_r+1}
= (ba)^{3(\ell-1)+1}b^{q_1+2}a^{-p_2}b^{q_2}\cdots a^{-p_r}b^{q_r+1}
\\&\sim  ab^3(bab^3)^{\ell-2}ab^{q_1+\ell+1}a^{-p_2}b^{q_2}\cdots a^{-p_r}b^{q_r+3}=:\gamma_1.
\end{align*}
In both cases, there is a cobordism $C$ of genus
$
g(C) =\frac{\ell+r-2 + \varepsilon}{2}
$
between the closure $K$ of $\gamma_1$ and the connected sum
\begin{align*}
J_{\varepsilon} = 
 -T_{2,p_2-\varepsilon_2}\, \# \,\dots\, \#\, -T_{2,p_r-\varepsilon_r}
\# \,T_{2,\sum\limits_{i=1}^r q_i +5\ell-1+\varepsilon_q},
\end{align*}
where we choose $\varepsilon_1,\dots,\varepsilon_r, \varepsilon_q \in \{0,1\}$ such that $J_{\varepsilon}$ is a connected sum of torus knots and $\varepsilon= \varepsilon_q + \sum_{i=1}^r \varepsilon_i$. 
Using \eqref{additivity}, \eqref{lem:Upsilontorus2}, and \eqref{eq:strategy} again, the claim follows.
\end{proof}

We will need the following two technical lemmas for the proof of \Cref{lem:Murasugi}.

\begin{lem}\label{lem:technical1}
Let $\gamma = \Delta^{2\ell}a^{p_1}b^{q_1}\cdots a^{p_r}b^{q_r}$ for some $\ell \geq 0$, $r\geq 1$ and integers $p_i,q_i$ such that $p_i < 0$ or $p_i \geq 2$, and $q_i < 0$ or $q_i \geq 2$, for any $i\in \{1, \dots, r\}$.
Moreover, assume that $K=\widehat{\gamma}$ is a knot. Then
\begin{align*}
\upsilon(K)&\geq -\frac{\sum\limits_{i=1}^r (p_i + q_i) }{2}+r-2\ell - \#\{ i \mid p_i < 0 \} - \#\{i \mid q_i < 0\},
\end{align*}
where $\# A$ denotes the cardinality of the set $A$.
\end{lem}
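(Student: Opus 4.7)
The plan is to imitate, with appropriate modifications, the strategy used in the proof of the analogous positive-braid statement \Cref{lem:Upsilon1_Delta2lCase1}. Specifically, I would bound $\upsilon(K)$ from below by a chain of cobordisms: first from $K$ to an auxiliary knot $\widehat{\alpha}$, then from $\widehat{\alpha}$ to an explicit 3-strand torus knot $T$, combining the two via \eqref{eq:strategy}.

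After applying \Cref{rem:avsb} and a cyclic conjugation if necessary, I would arrange that a suitable positive $b$-exponent sits at the end, and rewrite $\gamma \sim \Delta^{2\ell} b\,a^{p_1}b^{q_1}\cdots a^{p_r}b^{q_r-1} =: \gamma_1$. Setting $\alpha \coloneqq a^{2r}\gamma_1$, one checks that $\widehat{\alpha}$ is still a knot, and by the saddle-move trick of \Cref{ex:introstrategy} (two saddles) there is a genus-$1$ cobordism between $\widehat{\alpha}$ and $T_{2,2r+1}\#K$, which yields
\[
\upsilon(K) \geq \upsilon(\widehat{\alpha}) + r - 1.
\]

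To bound $\upsilon(\widehat{\alpha})$ from below, I would construct a cobordism $C$ between $\widehat{\alpha}$ and a torus knot $T = T_{3,3(\ell+r)+1}$ (or of comparable type) by saddle moves that delete crossings of both signs: positive crossings from blocks with $p_i, q_i \geq 2$, reducing them toward a standard positive form as in \Cref{lem:Upsilon1_Delta2lCase1}, and all $|p_i|$ or $|q_i|$ negative crossings from blocks with $p_i, q_i < 0$. Each saddle drops $\chi$ by $1$, the resulting positive braid is identified with a representative of $T$ by means of \Cref{lem:braideq3} and \Cref{lem:braideq2}, and \Cref{lem:Upsilontorus} supplies $\upsilon(T)$. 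Combining $\upsilon(\widehat{\alpha}) \geq \upsilon(T) - g(C)$ with the previous inequality should give the claimed bound.

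The main obstacle is the bookkeeping of the cobordism in the second step. One must choose the saddle sequence so that each intermediate braid remains in a form amenable to further crossing deletions (after merging of adjacent same-generator blocks caused by eliminating an entire negative block), and so that the total saddle count produces a $g(C)$ whose extra contribution $N$ (beyond the positive-case count) matches exactly the $-N$ term in the claimed bound: intuitively, eliminating a negative block of $|p_i|$ negative crossings costs the same number of saddles but, unlike a positive-block deletion, does not shorten the distance to the positive-braid form of the target torus knot, hence the extra ``waste'' of one unit of genus per negative block. Handling the boundary configurations (several negative blocks adjacent to each other, cases where $p_r < 0$ forces a different choice of cyclic conjugation, and cases where merged blocks have exponent $0$ or $1$) constitutes the bulk of the technical work.
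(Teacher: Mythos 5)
Your first step (the genus-one cobordism giving $\upsilon(K) \geq \upsilon(\widehat{\alpha}) + r - 1$) is fine, but the second step --- deleting all negative crossings by saddle moves to reach a fixed positive target --- cannot produce the claimed bound, and the problem is not mere bookkeeping. Write $\Sigma_+ = \sum_{p_i>0}p_i + \sum_{q_i>0}q_i$ and $\Sigma_- = \sum_{p_i<0}(-p_i)+\sum_{q_i<0}(-q_i)$, and $N = \#\{i \mid p_i<0\}+\#\{i\mid q_i<0\}$. The claimed lower bound is $-\tfrac{\Sigma_+-\Sigma_-}{2}+r-2\ell-N$, which \emph{increases} as the negative exponents grow in absolute value (at rate $\tfrac{1}{2}$ per extra negative crossing), while it loses only a fixed $1$ per negative block. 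Your cobordism spends $|p_i|$ saddles, i.e.\ $|p_i|/2$ units of genus, to erase the block $a^{p_i}$, and erasing negative letters only moves $\widehat{\alpha}$ toward a more positive braid whose $\upsilon$ is \emph{more} negative; both effects push the resulting estimate $\upsilon(\widehat{\alpha})\geq \upsilon(T)-g(C)$ \emph{down} as $|p_i|$ grows. So the discrepancy with the claimed bound grows without limit: already for $K = \widehat{a^{-p}b^{3}}$ with $p$ odd the lemma asserts $\upsilon(K)\geq \tfrac{p-3}{2}$ (correct, since this diagram is alternating and $\sigma = p-3$), whereas any deletion cobordism from (a stabilization of) this braid to a positive target yields a bound that tends to $-\infty$ as $p\to\infty$. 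There is also no choice of larger torus-knot target that rescues this, since deletions can only decrease the positive letter count.

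The missing idea is that negative blocks must be cancelled by \emph{adding} positive crossings, not by deleting negative ones. The paper's proof of \Cref{lem:technical1} dispenses with the initial $a^{2r}$ step entirely and instead processes the $2r$ exponents one at a time: a positive block is simply cycled to the end by conjugation, while a negative block $a^{p_i}$ is handled by passing to $a^{2m}\delta$ with $2m = -p_i+2+\varepsilon_{i,p}$, so that the block becomes $a^{2+\varepsilon_{i,p}}$; the genus-one cobordism of \Cref{ex:introstrategy} then gives $\upsilon(\widehat{\delta}) \geq \upsilon(\widehat{a^{2m}\delta}) + m - 1$. Here the gain $m$ exactly offsets the change of $-\tfrac{1}{2}\sum(p_i+q_i)$ caused by replacing $p_i$ with $p_i+2m$, leaving a net loss of exactly $1$ per negative block --- which is the source of the $-N$ term. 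After $2r$ such steps one lands on a positive braid with all exponents $\geq 2$, where \Cref{prop:posupsilon} applies. If you want to keep a cobordism-to-torus-knot endgame, you would still need this additive trick for each negative block first; as written, your construction proves a strictly weaker (and for large $|p_i|$ vacuous) inequality.
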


\begin{lem}\label{lem:technical2}
Let $\gamma = \Delta^{2\ell+1}a^{p_1}b^{q_1}\cdots a^{p_{r-1}}b^{q_{r-1}}a^{p_r}$ for some $\ell \geq 0$, $r\geq 1$ and integers $p_i,q_i$ such that $p_i < 0$ or $p_i \geq 2$ for any $i\in \{1, \dots, r\}$ and $q_i < 0$ or $q_i \geq 2$ for any $i\in \{1, \dots, r-1\}$.
Moreover, assume that $K=\widehat{\gamma}$ is a knot. Then
\begin{align*}
\upsilon(K)&\geq -\frac{\sum\limits_{i=1}^{r-1} (p_i + q_i)+p_r }{2}+r-2\ell -\frac{3}{2}- \#\{ i \mid p_i < 0 \} - \#\{i \mid q_i < 0\}.
\end{align*}
\end{lem}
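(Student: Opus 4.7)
The plan is to deduce \Cref{lem:technical2} from \Cref{lem:technical1} by rewriting $\gamma$ as a conjugate braid with an even power of $\Delta$. Using $\Delta^{2\ell+1} = \Delta^{2(\ell+1)}\Delta^{-1}$, the identity $\Delta^{-1} = a^{-1}b^{-1}a^{-1}$, and the centrality of $\Delta^{2(\ell+1)}$, we have
\begin{align*}
\gamma = \Delta^{2(\ell+1)} a^{-1}b^{-1}a^{-1}\cdot a^{p_1}b^{q_1}\cdots a^{p_r} = \Delta^{2(\ell+1)} a^{-1}b^{-1}a^{p_1-1}b^{q_1}\cdots a^{p_r}.
\end{align*}
A cyclic conjugation moving the trailing $a^{p_r}$ to the front, where it combines with the leading $a^{-1}$, then yields
\begin{align*}
\gamma \sim \Delta^{2(\ell+1)} a^{p_r-1} b^{-1} a^{p_1-1} b^{q_1} a^{p_2} b^{q_2}\cdots a^{p_{r-1}} b^{q_{r-1}},
\end{align*}
a braid word in the form appearing in \Cref{lem:technical1} with parameters $\ell' = \ell+1$, $r' = r$, and exponent sequence $(P_1, Q_1, \ldots, P_r, Q_r) = (p_r - 1, -1, p_1-1, q_1, p_2, q_2, \ldots, p_{r-1}, q_{r-1})$.

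Setting $\Sigma_\gamma := \sum_{i=1}^{r-1}(p_i+q_i)+p_r$, one computes $\sum_{i=1}^{r}(P_i + Q_i) = \Sigma_\gamma - 3$, and checks that the number of negative entries among the $(P_i, Q_i)$ is $n' = n+1$: the original $n$ plus the new $Q_1 = -1$, since $p_r - 1$ is negative if and only if $p_r$ is, and similarly for $p_1 - 1$ (each of $p_r, p_1$ is by hypothesis negative or at least $2$, so the shift by $-1$ does not cross zero). Applying \Cref{lem:technical1} to the new braid therefore gives
\begin{align*}
\upsilon(K) \geq -\frac{\Sigma_\gamma - 3}{2} + r - 2(\ell+1) - (n+1) = -\frac{\Sigma_\gamma}{2} + r - 2\ell - \frac{3}{2} - n,
\end{align*}
which is precisely the bound claimed in \Cref{lem:technical2}.

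The main obstacle is verifying the admissibility hypothesis of \Cref{lem:technical1} (each exponent negative or at least $2$): the exponents $P_1 = p_r - 1$ and $P_2 = p_1 - 1$ equal $1$ exactly when $p_r = 2$ or $p_1 = 2$. Generically this is resolved by first cyclically conjugating the original exponent sequence via the relations $\Delta^{2\ell+1}a = b\Delta^{2\ell+1}$ and $\Delta^{2\ell+1}b = a\Delta^{2\ell+1}$, which cycle the sequence $(p_1, q_1, \ldots, p_r)$ by pairs and thereby let us choose which of the original exponents plays the role of $p_1$ and of $p_r$ before performing the reduction. In the degenerate cases in which no such cyclic shift avoids both $p_1 = 2$ and $p_r = 2$, further application of the identity $a^2\Delta^{2\ell+1} = \Delta^{2\ell+1}b^2$ together with \Cref{rem:avsb} transforms $\gamma$ into a braid conjugate to a positive $3$-braid, to which \Cref{lem:Upsilon1_Delta2l+1} or \Cref{prop:posupsilon} applies directly to give the required bound.
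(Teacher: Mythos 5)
Your main reduction is a genuinely different route from the paper's: the paper proves \Cref{lem:technical2} by rerunning the step-by-step argument of \Cref{lem:technical1} (a sequence of $2r-1$ conjugations and genus-$1$ cobordisms, using $a\Delta^{2\ell+1}=\Delta^{2\ell+1}b$ to swap the roles of $a$ and $b$, terminating in a positive braid of Garside normal form \eqref{eq:oddpower} to which \Cref{lem:Upsilon1_Delta2l+1} applies), whereas you absorb $\Delta^{-1}=a^{-1}b^{-1}a^{-1}$ into the word and quote \Cref{lem:technical1} as a black box. Where it applies, your reduction is correct: the bookkeeping $\sum(P_i+Q_i)=\Sigma_\gamma-3$, $r'=r$, $\ell'=\ell+1$, $n'=n+1$ does reproduce the claimed bound exactly, and this is a nice economy over the paper's argument.

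However, there is a genuine gap in your treatment of the degenerate cases, and the proposed fallback is false. Your reduction necessarily decrements \emph{two} cyclically adjacent exponents of the sequence $(p_1,q_1,\dots,q_{r-1},p_r)$, so you need a cyclic representative in which both the first and the last exponent are negative or $\geq 3$; when every cyclically adjacent pair contains an exponent equal to $2$, no rotation helps. Your claim that in this situation $\gamma$ is conjugate to a positive $3$-braid is wrong: take $\gamma=\Delta a^{2}b^{-7}a^{2}$ (so $\ell=0$, $r=2$, exponents $(2,-7,2)$). This satisfies the hypotheses of \Cref{lem:technical2}, its closure is a knot (the underlying permutation is a $3$-cycle), every cyclically adjacent pair of exponents contains a $2$, and $\operatorname{wr}(\gamma)=0$. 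Since the writhe is a conjugacy invariant and every nontrivial positive braid word has positive writhe, $\gamma$ cannot be conjugate to any positive $3$-braid, so neither \Cref{lem:Upsilon1_Delta2l+1} nor \Cref{prop:posupsilon} is applicable. (Even in degenerate cases where $\gamma$ \emph{were} conjugate to a positive braid, asserting that the positive-braid formula "directly" yields the required bound --- which is stated in terms of the original $p_i,q_i$ and the count of negative exponents --- would still require tracking the exponents through the conversion, which you do not do.) To repair the argument you would either have to handle these degenerate words by a separate cobordism argument, or abandon the reduction and argue as the paper does; as written, the proof is incomplete.
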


For the proof of \Cref{lem:technical1} and \Cref{lem:technical2}, we refer the reader to the very end of this section; we will first prove \Cref{lem:Murasugi} using these lemmas. 

\begin{proof}[Proof of \Cref{lem:Murasugi}]
Let $k$ be the number of exponents $q_j$ of $\gamma$ with $q_j=1$ and let $\mathcal{J} = \{j_1, \dots, j_k\}$ for $0 \leq k \leq r$ be the set of indices such that $q_{j} =1$ if and only if $j \in \mathcal{J}$. For all $j \in \mathcal{J} $, we rewrite the subword $a^{-p_j} b^{q_j}$ of $\gamma$ using $\Delta^{-1}ab = a^{-1}$ as
\begin{align*}
a^{-p_j} b^{q_j} = a^{-p_j} b
= a^{-p_j} a^{-1} \Delta \Delta^{-1} a b = a^{-p_j -1} \Delta a^{-1} = \Delta b^{-p_j-1} a^{-1}.
\end{align*}
Note that if $j, j+1 \in \mathcal{J}$, then
$
a^{-p_j} b^{q_j}a^{-p_{j+1}} b^{q_{j+1}} = \Delta^2 a^{-p_j-1}b^{-p_{j+1}-2}a^{-1}.
$
After rewriting $a^{-p_j} b^{q_j}$ for all $j \in \mathcal{J} $, 
the braid $\gamma $ is conjugate to $ \gamma_1 = \Delta^{2\ell + k} \alpha$ for some $3$-braid $\alpha$ which is of the form
\begin{align*}
\alpha = \begin{cases}
a^{\widetilde{p_1}}b^{\widetilde{q_1}}
 \cdots a^{\widetilde{p_{n}}}b^{\widetilde{q_{n}}} \text{ for } n = r-\frac{k}{2}& \text{if } k \text{ is even},\\
b^{\widetilde{p_1}}a^{\widetilde{q_1}}
 \cdots b^{\widetilde{p_{n-1}}}a^{\widetilde{q_{n-1}}}b^{\widetilde{p_{n}}} \text{ for } n = r-\frac{k-1}{2}& \text{if } k \text{ is odd},
\end{cases} 
\end{align*}
where
$
\sum\limits_{i=1}^n (\widetilde{p_i} + \widetilde{q_i})  = \sum\limits_{i=1}^r \left( - p_i +q_i\right) -3k 
$ and where 
the $\widetilde{p_i}$ and $\widetilde{q_i}$ fulfill the assumptions of \Cref{lem:technical1} and \Cref{lem:technical2}, respectively, \ie where $\widetilde{p_i} < 0$ or $\geq 2$ and $\widetilde{q_i} < 0$ or $\geq 2$ for any $i$. 
The number of negative exponents in $\alpha$ equals the number of negative exponents $-p_i$ in $\gamma$, so 
\begin{align*}
\#\{ i\mid \widetilde{p_i} < 0 \} + \#\{ i \mid \widetilde{q_i} < 0 \}  =r.
\end{align*}
If $k$ is even, by \Cref{lem:technical1}, we get
\begin{align*}
  \upsilon\left( \widehat{\gamma}\right) &\geq 
  -\frac{\sum\limits_{i=1}^n (\widetilde{p_i} + \widetilde{q_i}) }{2}+n-(2\ell+k) - \#\{ i \mid \widetilde{p_i} < 0 \} + \#\{ i \mid \widetilde{q_i} < 0 \}\\
  &= -\frac{ \sum\limits_{i=1}^r \left( - p_i +q_i\right) -3k }{2}+r-\frac{k}{2}-(2\ell+k) - r
 = \frac{\sum\limits_{i=1}^r \left( p_i -q_i\right)  }{2}-2\ell.
\end{align*}
Similarly, if $k$ is odd, the claim follows from  \Cref{lem:technical2}.
\end{proof}

It remains to prove \Cref{lem:technical1} and \Cref{lem:technical2}.

\begin{proof}[Proof of \Cref{lem:technical1}]
We will modify the braid word $\gamma$ in $2r$ steps, where each step corresponds to one of  
the $2r$ exponents $p_i, q_i$, $i \in \{1, \dots, r\}$, of $\gamma$. In every step, we will either just conjugate $\gamma$ (if the corresponding exponent is positive) or perform a cobordism of genus $1$ between the closure of $a^{2n}\gamma$ or $b^{2n}\gamma$ and the connected sum $T_{2,2n+1}\# \widehat{\gamma}$ for some $n \geq 0$ --- similarly as the cobordism described in \Cref{ex:introstrategy} and used in the proofs of \Cref{lem:Upsilon1_Delta2l+1},  \Cref{lem:Upsilon1_Delta2lCase1} and \Cref{lem:Upsilon1_Delta2lCase2}. We now describe these steps in more detail.
First, let $\gamma_{0,q}\pr = \gamma$ and define
\begin{align*}
&a^{-p_1+2+\varepsilon_{1,p}} \gamma_{0,q}\pr = \Delta^{2\ell}a^{2+\varepsilon_{1,p}}b^{q_1}a^{p_2}b^{q_2}\cdots a^{p_r}b^{q_r} &
\\&\sim \Delta^{2\ell} b^{q_1}a^{p_2}b^{q_2}\cdots a^{p_r}b^{q_r}a^{2+\varepsilon_{1,p}} =: \gamma_{1,p}\pr & \text{if } p_1 < 0 \text{ and}\\
&\gamma_{0,q}\pr \sim \Delta^{2\ell} b^{q_1}a^{p_2}b^{q_2}\cdots a^{p_r}b^{q_r}a^{p_1} =:\gamma_{1,p}\pr &\text{if } p_1 > 0,
\end{align*}
so that 
$
\gamma_{1,p}\pr = \Delta^{2\ell} b^{q_1}a^{p_2}\cdots a^{p_r}b^{q_r}a^{\widetilde{p_1}}
$
for some $\widetilde{p_1} \geq 2$ (note that we assumed $p_1 < 0$ or $p_1 \geq 2$). Here, if $p_1 < 0$, we choose $\varepsilon_{1,p} \in \{0,1\}$ such that $-p_1 + 2 + \varepsilon_{1,p}$ is even and $\widehat{\gamma_{1,p}\pr}$ is a knot.
Second, let $\varepsilon_{1,q} \in \{0,1\}$ such that $-q_1 + 2 + \varepsilon_{1,q}$ is even if $q_1 < 0$, and define
\begin{align*}
\gamma_{1,q} &= 
b^{-q_1+2+\varepsilon_{1,q}} \gamma_{1,p}\pr
 = \Delta^{2\ell}b^{2+\varepsilon_{1,q}}a^{p_2}b^{q_2}\cdots a^{p_r}b^{q_r}a^{\widetilde{p_1} } &\\&\sim \Delta^{2\ell} a^{p_2}b^{q_2}\cdots a^{p_r}b^{q_r}a^{\widetilde{p_1} } b^{2+\varepsilon_{1,q}}=: \gamma_{1,q}\pr & \text{if } q_1 < 0 \text{ and}\\
\gamma_{1,q} &=
\gamma_{1,p}\pr \sim \Delta^{2\ell} a^{p_2}b^{q_2}\cdots a^{p_r}b^{q_r}a^{\widetilde{p_1} } b^{q_1} =:\gamma_{1,q}\pr & \text{if } q_1 > 0,
\end{align*}
so that 
$
\gamma_{1,q}\pr = \Delta^{2\ell} a^{p_2}b^{q_2}\cdots a^{p_r}b^{q_r}a^{\widetilde{p_1}}b^{\widetilde{q_1}}
$
for some $\widetilde{p_1}, \widetilde{q_1}\geq 2$.
Inductively, for any $1 \leq i \leq r$, we let
\begin{align*}
&a^{-p_i+2+\varepsilon_{i,p}} \gamma_{i-1,q}\pr 
= \Delta^{2\ell}a^{2+\varepsilon_{i,p}}b^{q_i}
a^{p_{i+1}}\cdots a^{p_r}b^{q_r}a^{\widetilde{p_1}}b^{\widetilde{q_1}}
 \cdots a^{\widetilde{p_{i-1}}}b^{\widetilde{q_{i-1}}} &
\\&\sim\Delta^{2\ell}b^{q_i}
a^{p_{i+1}}\cdots a^{p_r}b^{q_r}a^{\widetilde{p_1}}b^{\widetilde{q_1}}
 \cdots a^{\widetilde{p_{i-1}}}b^{\widetilde{q_{i-1}}} a^{2+\varepsilon_{i,p}} =: \gamma_{i,p}\pr &\text{if } p_i < 0 \text{ and}\\
&\gamma_{i-1,q}\pr \sim \Delta^{2\ell} b^{q_i}a^{p_{i+1}}\cdots a^{p_r}b^{q_r}a^{\widetilde{p_1}}b^{\widetilde{q_1}}
 \cdots a^{\widetilde{p_{i-1}}}b^{\widetilde{q_{i-1}}}a^{p_{i}} =:\gamma_{i,p}\pr & \text{if } p_i > 0,
\end{align*}
so that 
\begin{align*}
\gamma_{i,p}\pr = \Delta^{2\ell} b^{q_i}a^{p_{i+1}}\cdots a^{p_r}b^{q_r}a^{\widetilde{p_1}}b^{\widetilde{q_1}}
 \cdots a^{\widetilde{p_{i-1}}}b^{\widetilde{q_{i-1}}}a^{\widetilde{p_{i}}} 
\end{align*}
for some integers $\widetilde{p_{1}}, \widetilde{q_{1}} , \dots, \widetilde{p_{i-1}} , \widetilde{q_{i-1}} ,  \widetilde{p_{i}} \geq 2$. Here we choose $\varepsilon_{i,p} \in \{0,1\}$ such that $-p_i + 2 + \varepsilon_{i,p}$ is even if $p_i < 0$.
Moreover, for $1 \leq i \leq r$, we let  $\varepsilon_{i,q} \in \{0,1\}$ such that $-q_i + 2 + \varepsilon_{i,q}$ is even, and 
define 
\begin{align*}
\gamma_{i,q} &= b^{-q_i+2+\varepsilon_{i,q}} \gamma_{i,p}\pr 
& \text{if } q_i < 0 \text{ and}\\
\gamma_{i,q}  &=\gamma_{i,p}\pr & \text{if } q_i > 0;
\end{align*}
and we define $\gamma_{i,q}\pr$ similarly as $\gamma_{1,q}\pr$.
 Inductively, after $2r$ steps, we get the positive $3$-braid 
\begin{align*}
\gamma_{r,q}\pr = \Delta^{2\ell} a^{\widetilde{p_1}}b^{\widetilde{q_1}}
 \cdots a^{\widetilde{p_{r}}}b^{\widetilde{q_{r}}}
\end{align*}
with 
 \begin{align*}
 \widetilde{p_i}=
 \begin{cases}
2 + \varepsilon_{i,p} & \text{if } p_i < 0, \\
p_i  & \text{if } p_i > 0,
\end{cases} \qquad \text{and}\qquad 
\widetilde{q_i}=
 \begin{cases}
2 + \varepsilon_{i,q} & \text{if } q_i < 0, \\
q_i  & \text{if } q_i > 0, 
\end{cases} 
 \end{align*}
 for all $1 \leq i \leq r$; 
 so that $\widetilde{p_1},\widetilde{q_1},
 \dots \widetilde{p_{r}},\widetilde{q_{r}} \geq 2$. 
 By \Cref{prop:posupsilon},
we have
 \begin{align*}
 \upsilon\left( \widehat{\gamma_{r,q}\pr }\right) &= 
 -\frac{\sum\limits_{\stackrel{i=1}{ p_i > 0}}^r p_i + \sum\limits_{\stackrel{i=1}{ q_i > 0}}^r q_i + \sum\limits_{\stackrel{i=1}{ p_i < 0}}^r \left(2 + \varepsilon_{i,p}\right) + \sum\limits_{\stackrel{i=1}{ q_i < 0}}^r \left(2 + \varepsilon_{i,q}\right)}{2}+r-2\ell.
 \end{align*}
 Now, note that if $p_i < 0$ for some $1 \leq i \leq r$, then there is a cobordism of genus $1$ between $\widehat{\gamma_{i,p}\pr}$ and $T_{2,2m+1} \# \widehat{\gamma_{i-1,q}\pr }$ by using two saddle moves, where $m = \frac{-p_i+2+\varepsilon_{i,p}}{2}$,
   so similarly as in
 \eqref{eq:exintro} from \Cref{ex:introstrategy},
we have
 \begin{align*}
 \upsilon\left( \widehat{\gamma_{i-1,q}\pr }\right) \geq  \upsilon\left( \widehat{\gamma_{i,p}\pr}\right) + m -1 
 = \upsilon\left( \widehat{\gamma_{i,p}\pr}\right) + \frac{-p_i+\varepsilon_{i,p}}{2} .
 \end{align*}
 Similarly, if $q_i < 0$ for some $1 \leq i \leq r$, then 
 $
 \upsilon( \widehat{\gamma_{i,p}\pr }) \geq  \upsilon( \widehat{\gamma_{i,q}\pr}) + \frac{-q_i+\varepsilon_{i,q}}{2} .
 $
 In addition, if $p_i > 0$, then we have $\upsilon( \widehat{\gamma_{i,p}\pr})= \upsilon( \widehat{\gamma_{i-1,q}\pr})$, and
 if $q_i > 0$, then $\upsilon( \widehat{\gamma_{i,q}\pr})= \upsilon( \widehat{\gamma_{i,p}\pr})$.
 We conclude
 \begin{align*}
  \upsilon\left( \widehat{\gamma}\right) &=
 \upsilon\left( \widehat{\gamma_{0,q}\pr}\right) \geq 
 \upsilon\left( \widehat{\gamma_{r,q}\pr}\right) + \sum\limits_{\stackrel{i=1}{ p_i < 0}}^r \frac{-p_i+\varepsilon_{i,p}}{2} + \sum\limits_{\stackrel{i=1}{ q_i < 0}}^r \frac{-q_i+\varepsilon_{i,q}}{2}  \\
 &=  -\frac{\sum\limits_{\stackrel{i=1}{ p_i > 0}}^r p_i + \sum\limits_{\stackrel{i=1}{ q_i > 0}}^r q_i + \sum\limits_{\stackrel{i=1}{ p_i < 0}}^r \left( p_i+2\right) + \sum\limits_{\stackrel{i=1}{ q_i < 0}}^r \left(q_i+2\right)}{2} +r-2\ell
 \\&= -\frac{\sum\limits_{i=1}^r (p_i + q_i) }{2}+r-2\ell - \#\{ i \mid p_i < 0 \} - \#\{i \mid q_i < 0\}.\qedhere
 \end{align*}
\end{proof}

\begin{proof}[Proof of \Cref{lem:technical2}]
The strategy of the proof is the same as in the proof of \Cref{lem:technical1}. Here, we need $2r-1$ steps corresponding to the $2r-1$ exponents $p_1, q_1,\dots ,p_{r-1},q_{r-1},p_r$ of $\gamma$. The steps are similar as in the proof of \Cref{lem:technical1}, the only change is that we multiply $\gamma_{i-1,q}\pr$ by a power of $b$ if $p_i <0$, and $\gamma_{i,p}\pr$ by a power of $a$ if $q_i < 0$ (since $a\Delta^{2\ell+1}= \Delta^{2\ell+1}b$ and $b\Delta^{2\ell+1}= \Delta^{2\ell+1} a$). Thus, starting with $\gamma_{0,q}\pr=\gamma$, after $2r-1$ steps we obtain the positive $3$-braid
\begin{align*}
\gamma_{r,p}\pr = \Delta^{2\ell+1} a^{\widetilde{p_1}}b^{\widetilde{q_1}}
 \cdots a^{\widetilde{p_{r-1}}}b^{\widetilde{q_{r}-1}}a^{\widetilde{p_r}}
\end{align*}
with 
 \begin{align*}
 \widetilde{p_i}&=
 \begin{cases}
2 + \varepsilon_{i,p} & \text{if } p_i < 0, \\
p_i  & \text{if } p_i > 0,
\end{cases} \qquad \text{and} \qquad
\widetilde{q_i}=
 \begin{cases}
2 + \varepsilon_{i,q} & \text{if } q_i < 0, \\
q_i  & \text{if } q_i > 0.
\end{cases} 
 \end{align*}
 By \Cref{lem:Upsilon1_Delta2l+1}, we have
\begin{align*}
\upsilon\left(\gamma_{r,p}\pr \right)
&=
 -\frac{\sum\limits_{\stackrel{i=1}{ p_i > 0}}^{r} p_i + \sum\limits_{\stackrel{i=1}{ q_i > 0}}^{r-1} q_i + \sum\limits_{\stackrel{i=1}{ p_i < 0}}^r \left(2 + \varepsilon_{i,p}\right) + \sum\limits_{\stackrel{i=1}{ q_i < 0}}^{r-1} \left(2 + \varepsilon_{i,q}\right)}{2}+r-2\ell-\frac{3}{2}.
\end{align*}
Since the steps we performed have similar effects on $\upsilon\left(\widehat{\gamma}\right)$ as the ones in the proof of \Cref{lem:technical1}, we get
 \begin{align*}
  \upsilon\left( \widehat{\gamma}\right) &=
 \upsilon\left( \widehat{\gamma_{0,q}\pr }\right) \geq 
 \upsilon\left( \widehat{\gamma_{r,p}\pr }\right) + \sum\limits_{\stackrel{i=1}{ p_i < 0}}^r \frac{-p_i+\varepsilon_{i,p}}{2} + \sum\limits_{\stackrel{i=1}{ q_i < 0}}^{r-1} \frac{-q_i+\varepsilon_{i,q}}{2}  \\
 &=  -\frac{\sum\limits_{\stackrel{i=1}{ p_i > 0}}^r p_i + \sum\limits_{\stackrel{i=1}{ q_i > 0}}^{r-1} q_i + \sum\limits_{\stackrel{i=1}{ p_i < 0}}^r \left( p_i+2\right) + \sum\limits_{\stackrel{i=1}{ q_i < 0}}^{r-1} \left(q_i+2\right)}{2} +r-2\ell-\frac{3}{2}
 \\&=  -\frac{\sum\limits_{i=1}^{r-1} (p_i + q_i)+p_r }{2}+r-2\ell -\frac{3}{2}- \#\{ i \mid p_i < 0 \} - \#\{i \mid q_i < 0\}.\qedhere
 \end{align*}

\end{proof}

\subsection{Further discussion of \texorpdfstring{\Cref{thm:upsilon}}{Theorem 1.1}}\label{sec:discussion}

In this section, we provide some further context on our main result. In particular, in \Cref{sec:prooftechnique} we will discuss why it might be surprising that our proof strategy works for all $3$-braid knots.

\subsubsection{Comparison of upsilon and the classical signature}\label{rem:upsilonsignature}

By \Cref{thm:upsilon} and \Cref{prop:signature}, we have
\begin{align}\label{eq:signatureupsilon}
\sigma(K) = 2\upsilon(K)
\end{align}
for any knot $K$ that is the closure of a $3$-braid $\gamma = \Delta^{2\ell}a^{-p_1}b^{q_1}\cdots a^{-p_r}b^{q_r}$ for integers $\ell \in \Z$, $r\geq 1$ and $p_i,q_i\geq 1$ for $i\in \{1, \dots, r\}$.
Computations of the signature for torus knots (and links) of braid index $3$, first done by Hirzebruch, Murasugi and Shinora \cite[Proposition 9.1, pp.~34-35]{murasugibook}, together with \Cref{lem:Upsilontorus} from \Cref{sec:upsilonprops} imply that the equality in \eqref{eq:signatureupsilon} is in fact true 
 for all $3$-braid knots $K$ except for the cases that $K=\pm T_{3,3\ell+1}$ for odd $\ell >0$ or $K=\pm T_{3,3\ell+2}$ for odd $\ell >0$. In the exceptional cases, we have $\sigma(K) = 2\upsilon(K)-2$.
As mentioned in the introduction, this
improves the inequality $ \vert \upsilon(K) - \frac{\sigma(K)}{2}\vert \leq 2$ for all $3$-braid knots $K$ in \cite[Proposition 4.4]{Feller_2017}.

It was shown in \cite[Theorem 1.2]{OSSunorientedknotFloer} that $\vert \upsilon(K) - \frac{\sigma(K)}{2} \vert$
gives a lower bound on the \emph{nonorientable smooth $4$-genus} of a knot $K$, denoted $\gamma_4(K)$, the minimal first Betti number of a nonorientable 
surface in $B^4$ that meets the boundary $S^3$ along $K$. 
The similarity of the invariant $\upsilon$ and the classical signature $\sigma$ on $3$-braid knots $K$ described above clearly does not lead to a good lower bound on $\gamma_4(K)$. 

 However, the equality $\sigma(K) = 2\upsilon(K)$ for most $3$-braid knots is actually no great surprise when noting that in fact $ \vert \upsilon(K) - \frac{\sigma(K)}{2} \vert \leq 1$ must be true for all $3$-braid knots $K$ for the following reason. It is not hard to see that for every $3$-braid knot $K$, there is a nonorientable band move to a $2$-bridge knot $J$, which is alternating \cite{goodrick}. This implies that the \emph{nonorientable cobordism distance} $d_\gamma(K,J)=\gamma_4(K \# -J)$ between $K$ and $J$ is bounded from above by $1$. On the other hand, using that $\upsilon$ and $\sigma$ induce homomorphisms $\CC \to \Z$
  (see \Cref{sec:upsilonprops} and \cite{murasugi}), 
the inequality $\vert \upsilon(K) - \frac{\sigma(K)}{2} \vert \leq \gamma_4(K)$ implies that 
\begin{align*}
\left \vert \upsilon(K) - \frac{\sigma(K)}{2}\right \vert &
=\left \vert \upsilon\left(K \# -J \right) - \frac{\sigma\left(K \# -J \right)}{2} \right \vert
\leq d_\gamma(K,J) \leq 1,
\end{align*}
where we used $\upsilon(J) = \frac{\sigma(J)}{2}$ by \Cref{prop:alternating}. 

Note that a similar argument shows that $ \vert \upsilon(K) - \frac{\sigma(K)}{2} \vert \leq 2$ for all $4$-braid knots $K$, using two nonorientable band moves to transform $K$ into a $2$-bridge link, which is also alternating.

\subsubsection{On the proof technique}\label{sec:prooftechnique}

As mentioned in the introduction, it came as a surprise to the author that our proof strategy works not only for positive $3$-braid knots, but for all $3$-braid knots.
Let us make this more precise.

The proofs in \Cref{sec:upsilonpos} and \Cref{sec:upsilongeneral} imply, for any $3$-braid knot $K$, the existence of 
cobordisms $C_1$ and $C_2$ of genus $g(C_1)$ and $g(C_2)$ between $K$ and (connected sums of) torus knots $T_1$ and $T_2$, respectively, such that
\begin{align*}
g\left(C_1\right) + g\left(C_2\right) = \left \vert \upsilon(T_2)-\upsilon(T_1)\right \vert
\end{align*}
and 
\begin{align*}
\upsilon(K) = \upsilon(T_1) + g(C_1) = \upsilon(T_2)- g(C_2).
\end{align*}
For example, for knots $K$ that are closures of positive $3$-braids of Garside normal form \eqref{eq:oddpower}, the proof of \Cref{lem:Upsilon1_Delta2l+1Upper} shows the existence of such a cobordism $C_1$ for $T_1 = J_{\varepsilon}$ as in the proof of \Cref{lem:upsilonUpperBound}; and the existence of such a cobordism $C_2$ between $K$ and $T_2 = T_{3,3\left(\ell +r\right)+1} \# -T_{2,2r+1}$ follows from the proof of \Cref{lem:Upsilon1_Delta2l+1}.

The same strategy would work to determine the concordance invariants $s$ and $\tau$ for all positive $3$-braid knots $K$. Indeed, every positive $3$-braid knot can be realized as the slice of a cobordism $C$ between the unknot $U$ and a torus knot $T$ of braid index $3$ such that $g(C) = \vert \tau(U)-\tau(T)\vert=\vert s(U)-s(T)\vert$ \cite[Proposition 4.1]{lobb}. 
However, in contrast, there are $3$-braid knots where this strategy provably fails to determine $s$ and $\tau$. A concrete example is the $3$-braid knot $10_{125}$ --- the closure of $a^{-5}ba^3b$ \cite{knotinfo} --- which is not squeezed \cite[Example 3.1]{lobb}. This means that every cobordism $C$ between two connected sums of torus knots $T_1$ and $T_2$ that has $10_{125}$ as a slice satisfies $g(C)>\vert\tau(T_2)-\tau(T_1)\vert=\vert s(T_2)- s(T_1)\vert$.

\subsubsection{Comparison of the normal forms for $3$-braids}\label{sec:comparisonnormalforms}

An algorithm described in \cite[Section 7]{birman1993} as Schreier's solution to the conjugacy problem \cite{schreier} can be used to convert $3$-braids in Garside normal form (cf.~\Cref{def:garsidenormalform}) to $3$-braids in Murasugi normal form (cf.~\Cref{def:murasuginormalform}):
If $\gamma$ is a $3$-braid of Garside normal form \eqref{eq:evenpower},
then 
\begin{align*}
\gamma \sim 
\Delta^{2(\ell+r)} a^{-1} b^{p_1-2}a^{-1}b^{q_1-2}\cdots a^{-1} b^{p_r-2}a^{-1}b^{q_r-2},
\end{align*}
and if $\gamma$ is of Garside normal form \eqref{eq:oddpower},
then 
\begin{align*}
\gamma \sim \Delta^{2(\ell+r)} a^{-1} b^{p_1-2}a^{-1}b^{q_1-2}\cdots a^{-1} b^{p_{r-1}-2}a^{-1}b^{q_{r-1}-2} a^{-1}b^{p_r-2}.
\end{align*}
In addition, it is easy to see how $3$-braids of Garside normal form \eqref{eq:linkcase} or \eqref{eq:torusknotcase} 
are conjugate to braids of Murasugi normal form \eqref{eq:case1} or \eqref{eq:case2}.

\section{On alternating distances of $3$-braid knots}\label{sec:alternation}

In this section, we prove \Cref{cor:alt} from the introduction and provide lower and upper bounds on the alternation number and dealternating number of any $3$-braid knot which differ by $1$.

\subsection{Alternating distances of positive $3$-braid knots}\label{sec:altpos}

We will prove the following proposition.

\begin{prop}\label{thm:alt}
Let $K$ be a knot that is the closure of a positive $3$-braid. Then
\begin{align*}
\alt(K) &= \dalt(K)
= \tau(K) +\upsilon(K) 
\\&=
\begin{cases}
\ell & \text{if } K \text{ is the torus knot } T_{3,3\ell+k} \text{ for } \ell \geq 0, k \in \{1, 2\},\\
r+\ell-1 & \text{if } K \text{ is the closure of a braid of the form in } \eqref{eq:evenpower} \text{ or } \eqref{eq:oddpower},\\
\end{cases}
\end{align*}
where \eqref{eq:evenpower} and \eqref{eq:oddpower} refer to the Garside normal forms from \Cref{prop:normalform}.
\end{prop}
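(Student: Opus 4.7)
The plan is to sandwich $\alt(K)$ and $\dalt(K)$ between matching lower and upper bounds that both evaluate to $\tau(K)+\upsilon(K)$, so that all three quantities coincide. Throughout, I use $\tau(K)=g(K)$ by \eqref{eq:tauGamma}, and I read $\upsilon(K)$ off \Cref{prop:posupsilon} and \Cref{lem:Upsilontorus}.

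For the lower bound on $\alt(K)$, I would invoke the inequality $\left|\Upsilon_K(t)+\tau(K)\,t\right|\leq \alt(K)\cdot t$ displayed in \Cref{rem:Ag}, an easy consequence of Abe and Kishimoto's control of $\Upsilon$ under a single crossing change, combined with $\Upsilon_L(t)=\tfrac{\sigma(L)}{2}t$ for alternating $L$ and the identity $\tau(K)=g(K)$ for positive braid knots. Evaluating at $t=1$, together with the observation that $\tau(K)+\upsilon(K)\geq 0$ by direct substitution into the formulas of \Cref{prop:posupsilon} and \Cref{lem:Upsilontorus}, yields $\tau(K)+\upsilon(K)\leq \alt(K)$.

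For the upper bound on $\dalt(K)$, I would invoke the minimal positive braid presentations constructed in the proof of \Cref{cor:switches2}: $K$ is the closure of a positive braid word of the form $a^{P_1}b^{Q_1}\cdots a^{P_R}b^{Q_R}$ with $R=g(K)+\upsilon(K)+1$. Applying the standard inequality \eqref{upperbounddalt} to this representative---changing the sign of appropriate crossings in each syllable to make the diagram alternate between $a$ and $b^{-1}$ crossings---gives $\dalt(K)\leq R-1 = g(K)+\upsilon(K)=\tau(K)+\upsilon(K)$.

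Combining these two bounds with $\alt(K)\leq \dalt(K)$ from \eqref{eq:altdalt} forces $\alt(K)=\dalt(K)=\tau(K)+\upsilon(K)$. The explicit numerical values ($\ell$ in case \eqref{eq:torusknotcase}, and $r+\ell-1$ in cases \eqref{eq:evenpower} and \eqref{eq:oddpower}) then follow by plugging \Cref{lem:Upsilontorus}, \eqref{eq:tauTorus}, \eqref{eq:sliceBennequin} and \Cref{prop:posupsilon} into $\tau(K)+\upsilon(K)$. The main obstacle I anticipate is purely bookkeeping: confirming case by case that $\upsilon(K)+\tau(K)\geq 0$, so no sign is lost when passing from the absolute-value bound to the one-sided lower bound, and confirming that the explicit braid representatives of \Cref{cor:switches2} actually have the syllable form to which \eqref{upperbounddalt} applies directly.
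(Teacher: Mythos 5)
Your proposal is correct and follows essentially the same route as the paper: the lower bound is $\left\vert \tau(K) + \upsilon(K) \right\vert \leq \alt(K)$ (inequality \eqref{eq:lowerboundtauups}), the upper bound is Abe--Kishimoto's inequality \eqref{upperbounddalt} applied to the explicit positive braid representatives with $r+\ell$ (respectively $\ell+1$) syllable pairs constructed in the proof of \Cref{cor:switches2}, and the two bounds meet at $\tau(K)+\upsilon(K)$. The only cosmetic differences are that you route the lower bound through the $t$-parametrized form of the inequality from \Cref{rem:Ag} rather than citing \eqref{eq:lowerboundtauups} at $t=1$ directly, and that you make explicit the sign check $\tau(K)+\upsilon(K)\geq 0$ that the paper leaves implicit in the chain $r+\ell-1 = \tau(K)+\upsilon(K) = \left\vert \tau(K)+\upsilon(K)\right\vert$.
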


\begin{rem}
Some of the cases in \Cref{thm:alt} have already been proved by other authors. Indeed, Feller, Pohlmann and Zentner used the observation \eqref{eq:lowerboundtauups} below to show that $\alt\left(T_{3,3\ell+k}\right)=\ell$ for all $\ell \geq 0$, $k\in\{1,2\}$ \cite[Theorem 1.1]{Feller_2018}. The upper bound they used was provided by \cite[Theorem 8]{kanenobu}; in fact, the equality had already been shown by Kanenobu in half of the cases, namely when $\ell$ is even. 
Moreover, Abe and Kishimoto \cite[Theorem 3.1]{abekishimoto} showed that $\alt(K)=\dalt(K) = r + \ell -1$ if $K$ is a knot that is the closure of a positive $3$-braid of the form in \eqref{eq:evenpower}. However, to the best of this author's knowledge, 
it is new that $\alt(K)  = g(K) + \upsilon(K)$ for all positive $3$-braid knots $K$. Recall that $\tau(K) = g(K)$ for all positive $3$-braid knots $K$ by \Cref{eq:tauGamma} from \Cref{sec:braids}.
\end{rem}

Before we prove \Cref{thm:alt}, let us provide the necessary definitions and background.
The \emph{Gordian distance} $d_G(K,J)$ between two knots $K$ and $J$ is the minimal number of crossing changes needed to transform a diagram of $K$ into a diagram of $J$, where the minimum is taken over all diagrams of $K$ \cite{murakami}. 
The \emph{alternation number} $\alt(K)$ of a knot $K$ is defined as the minimal Gordian distance of the knot $K$ to the set of alternating knots \cite{kawauchi}, \ie 
\begin{align*}
\alt(K) = \text{min} \left\{d_G(K,J) \mid J \text{ is an alternating knot}\right\}.
\end{align*}
The \emph{dealternating number} $\dalt(K)$ of a knot $K$ is defined via a more diagrammatic approach \cite{adamsalmostalt}: it is the minimal number $n$ such that $K$ has a diagram that can be turned into an alternating diagram by $n$ crossing changes. 
It follows from the definitions that
\begin{align}\label{eq:altdalt}
\alt(K) \leq \dalt(K)
\end{align}
for any knot $K$ and $\alt(K)=\dalt(K) = 0$ if and only if $K$ is alternating. Note that there are families of knots for which the difference between the alternation number and the dealternating number becomes arbitrarily large \cite[Theorem 1.1]{lowrance}. 

In the proof of \Cref{thm:alt}, we will use that 
\begin{align}\label{eq:lowerboundtauups}
\left\vert \tau(K) + \upsilon(K) \right \vert \leq \alt(K)
\end{align}
for any knot $K$.
In fact, for all alternating knots $K$, we have
\begin{align}\label{eq:altinvariants}
\tau(K) = \frac{s(K)}{2}=-\upsilon(K)=-\frac{\Upsilon_K(t)}{t}=-\frac{\sigma(K)}{2}
\end{align}
for any $t \in (0,1]$ (see \cite[Theorem 1.4]{Ozsv_th_2003}, \cite[Theorem 3]{Rasmussen} and \cite[Theorem 1.14]{OSS2017}), where $s$ denotes Rasmussen's concordance invariant from Khovanov homology \cite{Rasmussen}. It follows from \cite[Theorem 2.1]{abe} --- which builds on ideas of Livingston \cite[Corollary 3]{livingston04} --- that the absolute value of the difference of any two of the invariants in \eqref{eq:altinvariants} is a lower bound on $\alt(K)$.  It was first observed in \cite{Feller_2018} that the upsilon invariant fits very well in this context (see also \cite[Lemma 8]{friedllivingstonzentner}).

Another main ingredient of our proof of \Cref{thm:alt} is the inequality
\begin{align}\label{upperbounddalt}
\operatorname{dalt}(\widehat{\gamma}) \leq r-1
\end{align}
for any positive $3$-braid $\gamma = a^{p_1}b^{q_1}\cdots a^{p_r}b^{q_r}$ with integers $r \geq 1$ and $p_i, q_i \geq 1$ for $i\in \{1, \dots, r\}$ \cite[Lemma 2.2]{abekishimoto}.

\begin{proof}[Proof of \Cref{thm:alt}]
Let $K$ be a knot that is the closure of a positive $3$-braid $\gamma$ of the form in \eqref{eq:evenpower} or \eqref{eq:oddpower}  from \Cref{prop:normalform} with $\ell \geq 0$.
We claim that then
\begin{align}\label{eq:altCDcase}
r + \ell -1 = \tau(K) + \upsilon(K) =\left\vert \tau(K) + \upsilon(K) \right \vert   \leq \alt(K) \leq \dalt(K) \leq r+\ell-1,
\end{align}
which implies the statement of the proposition for these knots. The two equalities in \eqref{eq:altCDcase} directly follow from our computations of $\upsilon(K)$ in \Cref{prop:posupsilon} and \Cref{eq:tauGamma} applied to $\gamma$.  
The first two inequalities are direct consequences of the inequalities \eqref{eq:lowerboundtauups} and \eqref{eq:altdalt}.
Finally, the last inequality follows from inequality \eqref{upperbounddalt} applied to the particular braid representatives of $K$ considered in the proof of \Cref{cor:switches2}.

For torus knots of braid index $3$, the statement follows analogously. More precisely, if $K= T_{3,3\ell+k}$ for $\ell \geq 0$ and $k \in \{1,2\}$, then by \Cref{eq:tauTorus,lem:Upsilontorus}, we have
$
\left\vert \tau(K) + \upsilon(K) \right \vert 
=\ell.
$
In addition, the inequality in \eqref{upperbounddalt} applied to the 
particular braid representatives of $K$ considered in the proof of \Cref{cor:switches2} implies that $\dalt\left(T_{3,3\ell+k}\right) \leq \ell$.
\end{proof}

From \Cref{thm:alt}, it is easy to deduce that the alternating positive $3$-braid knots are precisely the unknot and the connected sums $T_{2,2p+1} \# T_{2,2q+1}$ of two torus knots of braid index $2$ for $p, q \geq 0$. This was already known; in fact, the stronger statement is true that the only prime alternating positive braid knots are the torus knots of braid index $2$ \cite[Corollary 3]{baader}. Note that by \cite{morton} (see also \cite[Corollary 7.2]{birman1993}), the only composite $3$-braid knots are the connected sums $T_{2,2p+1} \# T_{2,2q+1}$ for $p, q \in \Z$. 

By \cite[Theorem 1.1]{abe}, the only torus knots with alternation number $1$ are the torus knots $T_{3,4}$ and $T_{3,5}$. A knot with dealternating number $1$ is called \emph{almost alternating}. 

\begin{cor}\label{cor:almostalt}
A positive $3$-braid knot is almost alternating if and only if it is one of the torus knots $T_{3,4}$ and $T_{3,5}$ or it is represented by a braid of the form
\begin{align*}
a^{p_1}b^{q_1}a^{p_2}b^{q_2},\quad
\Delta a^{p_1}b^{q_1}a^{p_2},\quad 
\Delta^2 a^{p_1} b^{q_1} \quad  \text{or}\quad 
\Delta^3 a^{p_1}
\end{align*}
for some integers $p_1, p_2, q_1, q_2 \geq 2$. 
\end{cor}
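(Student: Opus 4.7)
The plan is to derive this corollary as a direct consequence of \Cref{thm:alt} together with the Garside normal form classification in \Cref{prop:normalform}, since being almost alternating is, by definition, the condition $\dalt(K)=1$. By \Cref{prop:normalform} (and \Cref{rem:toruscasenormalform}), every positive $3$-braid knot $K$ is conjugate to one of: (i) a braid of form \eqref{eq:torusknotcase}, in which case $K$ is a torus knot $T_{3,3\ell+k}$ with $\ell\geq 0$ and $k\in\{1,2\}$; (ii) a braid of form \eqref{eq:evenpower} with $\ell\geq 0$, $r\geq 1$ and $p_i,q_i\geq 2$; or (iii) a braid of form \eqref{eq:oddpower} with $\ell\geq 0$, $r\geq 1$, $p_r\geq 2$ and $p_i,q_i\geq 2$ for $i<r$. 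In case (i), \Cref{thm:alt} gives $\dalt(K)=\ell$, and in cases (ii) and (iii) it gives $\dalt(K)=r+\ell-1$.

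The corollary now follows by solving $\dalt(K)=1$ in each case. In case (i) this forces $\ell=1$, producing exactly $T_{3,4}$ and $T_{3,5}$. In cases (ii) and (iii) one needs $r+\ell=2$ with $r\geq 1$, $\ell\geq 0$, so $(r,\ell)\in\{(1,1),(2,0)\}$. Substituting these four combinations of $(r,\ell)$ into the two Garside normal forms yields precisely
\[
a^{p_1}b^{q_1}a^{p_2}b^{q_2},\quad \Delta a^{p_1}b^{q_1}a^{p_2},\quad \Delta^2 a^{p_1}b^{q_1},\quad \Delta^3 a^{p_1},
\]
with the constraints $p_i,q_i\geq 2$ inherited from \eqref{eq:evenpower} and \eqref{eq:oddpower}, establishing the listed families.

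Conversely, every knot $K$ realized as the closure of a braid of one of these four forms, or as $T_{3,4}$ or $T_{3,5}$, satisfies $\dalt(K)=1$ by the same application of \Cref{thm:alt}; in particular $K$ is not alternating, so $K$ is genuinely almost alternating. There is no real obstacle beyond this bookkeeping: the only subtlety worth flagging is that the corollary should be read up to conjugacy (which is harmless, since \Cref{thm:alt} depends only on the conjugacy data $(r,\ell)$ of the Garside normal form), and that the parity conditions on the exponents from \Cref{prop:normalform} that ensure $\widehat{\gamma}$ is a knot are implicitly assumed, cutting out the few exponent tuples which would produce links rather than knots.
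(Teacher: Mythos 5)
Your proposal is correct and follows exactly the route the paper intends: the paper's own proof is the one-line ``This follows directly from \Cref{thm:alt},'' and you have simply filled in the routine bookkeeping of solving $\dalt(K)=1$ via the Garside normal form, i.e.\ $\ell=1$ in case \eqref{eq:torusknotcase} and $(r,\ell)\in\{(1,1),(2,0)\}$ in cases \eqref{eq:evenpower} and \eqref{eq:oddpower}.
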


\begin{proof}
This follows directly from \Cref{thm:alt}.
\end{proof}


\begin{rem}
In particular, the seven positive $3$-braid knots with crossing number $12$ (cf. \cite{knotinfo}) are all almost alternating.
\end{rem}

\begin{rem}\label{rem:Turaevgenus}
Our results imply that the Turaev genus equals the alternation number for all positive $3$-braid knots. Indeed, let $K$ be a knot that is the closure of a positive braid of the form in \eqref{eq:evenpower} or \eqref{eq:oddpower} with $\ell \geq 0$.
Then we have
\begin{align}\label{eq:Turaevgenus}
g_T(K)&=\alt(K) = \dalt(K) =
r+\ell-1,
\end{align}
where $g_T(K)$ denotes the Turaev genus of the knot $K$. The \emph{Turaev genus} $g_T(K)$ of a knot $K$
is another alternating distance \cite{lowrance}, which 
was first defined in \cite{dasbachfuter} as the minimal genus of a Turaev surface $F(D)$, where the minimum is taken over all diagrams $D$ of $K$. The Turaev surface $F(D)$ is a closed orientable surface embedded in $S^3$ associated to the diagram $D$. 
It is formed by building the natural cobordism between the circles in the two extreme Kauffman states (the \emph{all-$A$-state} and the \emph{all-$B$-state}) of the diagram $D$ via adding saddles for each crossing of $D$, and then capping off the boundary components with disks. More details on the definition can be found \eg in a survey by Champanerkar and Kofman 
\cite{turaevsurvey}.

The equality $g_T(K)=\dalt(K)$ in \eqref{eq:Turaevgenus} 
 easily follows from \Cref{thm:alt}, the inequalities 
$\vert \tau(K) + \frac{\sigma(K)}{2} \vert \leq g_T(K)$ \cite[Theorem 1.1]{dasbachlowrance} and 
$g_T(K) \leq \dalt(K)$ \cite[Cor. 5.4]{abekishimoto},
 and the fact that $\sigma(K) = 2\upsilon(K)$ for all knots that are closures of positive braids of Garside normal form \eqref{eq:evenpower} or \eqref{eq:oddpower} (see \Cref{rem:upsilonsignature}). 
 
It is not known whether the alternation number and the Turaev genus of a knot are in general comparable: it is not known whether $\alt(K) \leq g_T(K)$ for all knots $K$ 
(see \cite[Question 3]{lowrance}). However, it was shown by Abe and Kishimoto that $ g_T\left(T_{3,3\ell+k}\right) =\dalt\left(T_{3,3\ell+k}\right) =\ell$ for all $\ell \geq 0$ and $k \in \{1,2\}$ \cite[Theorem 5.9]{abekishimoto}, so $g_T(K)=\alt(K) = \dalt(K) $ is true for all positive $3$-braid knots.
\end{rem}

\begin{rem}\label{rem:singular}
In \cite{friedllivingstonzentner}, 
Friedl, Livingston and Zentner introduce
the invariant $\mathcal{A}_s(K)$, the minimal number of double point singularities in a generically immersed concordance from a knot $K$ to an alternating knot. 
In the case that the alternating knot is the unknot, this is the well studied invariant $c_4(K)$ called the \emph{$4$-dimensional clasp number}
\cite{shibuya}. 
A sequence of crossing changes in a diagram of a knot $K$ leading to a diagram of an alternating knot $J$ realizes an immersed concordance from $K$ to $J$ where any crossing change gives rise to a double point singularity in the concordance. 
We thus have $\mathcal{A}_s(K) \leq \alt(K)$ for any knot $K$, which resembles the inequality $c_4(K) \leq u(K)$ between the $4$-dimensional clasp number and the unknotting number $u(K)$ of $K$. 
Moreover, we have
$\left \vert \upsilon(K) + \tau(K) \right \vert \leq \mathcal{A}_s(K)$ for any knot $K$ \cite[Theorem 18]{friedllivingstonzentner}, so \Cref{thm:alt} implies $\mathcal{A}_s(K) = \alt(K)$ for all positive $3$-braid knots $K$.
\end{rem}

We are now ready to prove \Cref{cor:alt} from the introduction.

\begin{proof}[{Proof of \Cref{cor:alt}}]
The corollary follows directly from \Cref{thm:alt}, \Cref{rem:Turaevgenus} and \Cref{rem:singular}.
\end{proof}

\subsection{Bounds on the alternation number of general $3$-braid knots}\label{sec:altgeneral}

In the following, we turn our attention to $3$-braid knots in general, which are not necessarily the closure of positive $3$-braids. We will use that
\begin{align}\label{eq:lowerboundupss}
\left\vert \frac{s(K)}{2} + \upsilon(K) \right \vert \leq \alt(K)
\end{align}
for any knot $K$, which follows from \cite[Theorem 2.1]{abe}, see also equation \eqref{eq:altinvariants} from \Cref{sec:altpos}. Rasmussen's invariant $s$ was computed for all $3$-braid knots in Murasugi normal form (cf.~\Cref{def:murasuginormalform}) by Greene.\footnote{These computations were generalized to all links that are closures of $3$-braids in \cite{martin2019annular}.}

\begin{cor}\label{cor:3braidalt}
Let $\gamma = \Delta^{2\ell}a^{-p_1}b^{q_1}\cdots a^{-p_r}b^{q_r}$ for some $\ell \in \Z$, $r\geq 1$ and $p_i,q_i\geq 1$ for $i\in \{1, \dots, r\}$
such that $K=\widehat{\gamma}$ is a knot. Then
\begin{align*}
\left \vert \ell \right \vert -1 \leq \alt(K) &\leq \dalt(K) \leq \left \vert \ell \right \vert \qquad \text{if } \ell \neq 0 .
\end{align*}
\end{cor}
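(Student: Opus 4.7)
The middle inequality $\alt(K) \leq \dalt(K)$ is immediate from \eqref{eq:altdalt}, so the task is to prove the outer inequalities.

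For the lower bound $|\ell|-1 \leq \alt(K)$, I would apply inequality \eqref{eq:lowerboundupss}, $|s(K)/2 + \upsilon(K)| \leq \alt(K)$. \Cref{thm:upsilon} gives $\upsilon(K) = \tfrac{1}{2}\sum_{i=1}^r(p_i-q_i) - 2\ell$, and Greene's computation of Rasmussen's invariant $s$ for $3$-braid knots in Murasugi normal form (cited immediately before the corollary) provides an explicit formula for $s(K)$. Substituting the two formulas and simplifying should yield $s(K)/2 + \upsilon(K) = -\ell + \operatorname{sign}(\ell)$, with the $p_i, q_i$ terms cancelling and only the $\ell$-dependent part surviving; its absolute value is then exactly $|\ell|-1$.

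For the upper bound $\dalt(K) \leq |\ell|$, I would first reduce to the case $\ell \geq 1$: mirroring $K$ produces $-K$, and applying \Cref{rem:avsb} to swap the roles of $a$ and $b$ brings its braid word into Murasugi normal form with $\ell$ replaced by $-\ell$; since $\dalt(-K) = \dalt(K)$, it suffices to treat $\ell \geq 1$. Then the plan is to recycle the explicit braid word representatives of $\gamma$ already constructed inside the proof of \Cref{lem:Murasugiupper}. For instance, when $\ell \geq 2$ and $p_1 \geq 2$, $\gamma$ is conjugate to
\[
a(b^3 a b)^{\ell-2} b^3 a b^3 \cdot a^{-p_1+1} b^{q_1} \cdots a^{-p_r} b^{q_r+\ell+1},
\]
and a direct count reveals exactly $\ell$ positive $a$-letters, with every other $a$-letter negative and every $b$-letter positive. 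Changing these $\ell$ positive $a$-crossings to $a^{-1}$ in the braid closure diagram produces a braid word with all $a$'s negative and all $b$'s positive; its closure is an alternating diagram, giving $\dalt(K) \leq \ell$. The subsidiary cases $\ell = 1$ (with $p_1 \geq 2$ or $p_1 = 1$) and $\ell \geq 2$ with $p_1 = 1$ are dispatched analogously, using the corresponding representatives $ab^2 a^{-p_1+1} b^{q_1} \cdots a^{-p_r} b^{q_r+2}$, $ab^{q_1+2} a^{-p_2} b^{q_2} \cdots a^{-p_r} b^{q_r+2}$, and $ab^3(bab^3)^{\ell-2} a b^{q_1+\ell+1} a^{-p_2} b^{q_2} \cdots a^{-p_r} b^{q_r+3}$ from the same proof, each of which has exactly $\ell$ positive $a$-letters.

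The main obstacle will be the arithmetic in the lower bound: Greene's formula for $s(K)$ must be substituted with the correct sign conventions and with the right dependence on the sign of $\ell$ so that the $p_i, q_i$-terms cancel $\upsilon(K)$ cleanly and only $-\ell + \operatorname{sign}(\ell)$ remains. The upper bound, by contrast, is essentially a bookkeeping verification on top of braid manipulations already carried out in \Cref{lem:Murasugiupper}.
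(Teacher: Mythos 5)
Your proposal is correct. The lower bound is exactly the paper's argument: combine \eqref{eq:lowerboundupss} with \Cref{thm:upsilon} and Greene's formula for $s(K)$ (which the paper records as $s(K)=-\sum_{i=1}^r(p_i-q_i)+6\ell-2$ for $\ell>0$ and $-\sum_{i=1}^r(p_i-q_i)+6\ell+2$ for $\ell<0$); the $p_i,q_i$-terms do cancel and one finds $s(K)/2+\upsilon(K)=\ell-\operatorname{sign}(\ell)$ --- your predicted sign is reversed, but since only the absolute value enters \eqref{eq:lowerboundupss} this is immaterial, and the bound $|\ell|-1$ comes out as claimed. For the upper bound you diverge from the paper: the paper simply cites Abe--Kishimoto's Theorem~2.5 to get $\dalt(\widehat{\gamma})\leq|\ell|$, whereas you reprove it by taking the explicit conjugate representatives $\gamma_1$ from the proof of \Cref{lem:Murasugiupper}, observing that each contains exactly $\ell$ isolated positive $a$-letters, and changing those crossings to reach a word of the form $a^{-n_1}b^{m_1}\cdots a^{-n_s}b^{m_s}$ with all $n_i,m_i\geq 1$, whose closure is alternating. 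This is precisely the ``alternative way'' the paper itself records in the remark immediately following the corollary, and your letter counts in the four cases check out. What your route buys is self-containedness (no external dealternating-number citation) at the cost of the case analysis inherited from \Cref{lem:Murasugiupper}; the reduction to $\ell\geq 1$ via mirroring and $\dalt(-K)=\dalt(K)$ is sound.
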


\begin{proof}[Proof of \Cref{cor:3braidalt}]
The lower bound on the alternation number follows from the inequality \eqref{eq:lowerboundupss}, \Cref{thm:upsilon} and the values of the invariant $s$ for $K=\widehat{\gamma}$ \cite[Proposition 2.4]{greene}, namely 
\begin{align*}
s(K)&=
\begin{cases}
-\sum\limits_{i=1}^r (p_i - q_i) +6\ell - 2& \text{if } \ell > 0,\\
-\sum\limits_{i=1}^r (p_i - q_i) +6\ell + 2& \text{if } \ell < 0.\\
\end{cases}
\end{align*}
Moreover, it follows from \cite[Theorem 2.5]{abekishimoto} that $\dalt\left(\widehat{\gamma}\right) \leq \left \vert \ell \right \vert$.
\end{proof}

\begin{rem}
An alternative way to prove the upper bound on $\dalt(K)$ in \Cref{cor:3braidalt} for $\ell \geq 1$ follows from our observations in the proof of \Cref{lem:Murasugiupper}. In fact, the braid diagrams given by the braid representatives $\gamma_1$ of $K=\widehat{\gamma}$ considered in that proof can easily be transformed into alternating diagrams by $\ell$ crossing changes: it is enough to change the positive crossings corresponding to the single generators $a$ in $\gamma_1$ to negative crossings; we obtain generators $a^{-1}$ in the corresponding braid words which then correspond to alternating braid diagrams.
\end{rem}

\begin{rem}
If $K$ is represented by a $3$-braid of Garside normal form \eqref{eq:evenpower} or \eqref{eq:oddpower} (see \Cref{def:garsidenormalform}), then using the observations in \Cref{sec:comparisonnormalforms}, \Cref{cor:3braidalt} implies
\begin{align}\label{eq:altgeneral}
\left \vert r+\ell \right \vert -1 \leq \alt(K) &\leq \dalt(K) \leq \left \vert r+\ell \right \vert& \text{if } \left \vert r+\ell \right \vert > 0  \,\text{ and}\\
\alt(K) &= \dalt(K) = 0 & \text{if }  r+\ell  =0.\nonumber
\end{align}
By \Cref{thm:alt}, the lower bound in \eqref{eq:altgeneral} is sharp whenever $K$ is the closure of a positive $3$-braid of Garside normal form \eqref{eq:evenpower} or \eqref{eq:oddpower}. 
However,
there are examples 
where the upper bound in \eqref{eq:altgeneral} is sharp.
The two easiest such examples in terms of crossing number 
are the non-alternating
knots $8_{20}$ and $8_{21}$, which are represented by the $3$-braids (cf. \cite{knotinfo})
\begin{align*}
a^3b^{-1}a^{-3}b^{-1} &\sim \Delta^{-3} a^7 \qquad \text{and}\\
a^3 b a^{-2} b^2 &\sim \Delta^{-2} a^3 b^2 a^2 b^3,
\end{align*}
respectively. The lower bound on the alternation number from \eqref{eq:altgeneral} is $\left \vert r+\ell \right \vert -1 = 0$ in both cases. Indeed, by \cite[Theorem 8.6]{baldwin} both knots are quasialternating, so all the invariants from equation \eqref{eq:altinvariants} are equal \cite[Proposition 1.4]{baldwin}, \cite{manolescuozsvath}, \cite{OSS2017}.
\end{rem}

\begin{rem}
In a similar fashion as \Cref{cor:3braidalt}, the Turaev genus of all $3$-braid knots was determined up to an additive error of at most 1 by Lowrance in \cite[Proposition 4.15]{lowrancewidth} using his computation of the Khovanov width for these knots. More precisely, we have
\begin{align*}
\left \vert \ell \right \vert -1 \leq g_T(K)  \leq \left \vert \ell \right \vert\qquad \text{if } \ell \neq 0
\end{align*}
for any knot $K$ that is represented by $\gamma = \Delta^{2\ell}a^{-p_1}b^{q_1}\cdots a^{-p_r}b^{q_r}$ for some $\ell \in \Z$, $r\geq 1$ and $p_i,q_i\geq 1$ for $i\in \{1, \dots, r\}$.
\end{rem}

\section{The fractional Dehn twist coefficient of $3$-braids in Garside normal form}\label{sec:homogenization}

In this section, we compute the fractional Dehn twist coefficient of any $3$-braid in Garside normal form (cf.~\Cref{def:garsidenormalform}).\\

The \emph{fractional Dehn twist coefficient} 
is a homogeneous quasimorphism on the braid group $B_n$ that assigns to any $n$-braid $\gamma$ a rational number $\omega(\gamma)$.
Here, a \emph{quasimorphism} on a group $G$ is any map $\varphi \colon G \to \R$ such that 
\begin{align*}
\sup_{(a,b) \in G \times G} \left \vert \varphi(ab) - \varphi(a) - \varphi(b) \right \vert =\vcentcolon D_\varphi< \infty,
\end{align*}
where $D_\varphi$ is called the \emph{defect} of $\varphi$.
A quasimorphism $\varphi \colon G \to \R$ is called \emph{homogeneous} if $\varphi\left(a^k\right) = k \varphi(a)$ for all $k \in \Z$ and $a \in G$.
Any homogeneous quasimorphism is invariant under conjugation, so $\omega(\gamma)$ is invariant under the conjugacy class of $\gamma$.

The fractional Dehn twist coefficient first appeared in \cite{gabaioertel} in a different language. It can be defined for mapping classes of general surfaces with boundary, where we here view braids as mapping classes of the $n$ times punctured closed disk. Malyutin defined the fractional Dehn twist coefficient $\omega \colon B_n \to \R$, $n \geq 2$, for all braid groups and showed that its defect is $1$ if $n \geq 3$ and $0$ if $n=2$ \cite[Theorem 6.3]{malyutin}.
We refer the reader to \cite{malyutin} for a more detailed account.
 
\begin{cor}\label{cor:FDTC}
Let $\gamma$ be a $3$-braid. Then its fractional Dehn twist coefficient is
\begin{align*}
\omega(\gamma) = 
\begin{cases}
\ell & \text{if } \gamma \text{ is conjugate to a braid in } \eqref{eq:linkcase},\\
\frac{p+1}{6} +\ell & \text{if } \gamma \text{ is conjugate to a braid in } \eqref{eq:torusknotcase},\\
r +\ell & \text{if } \gamma \text{ is conjugate to a braid in } \eqref{eq:evenpower} \text{ or } \eqref{eq:oddpower}.
\end{cases}
\end{align*}
where \eqref{eq:linkcase}--\eqref{eq:oddpower} refer to the Garside normal forms from \Cref{prop:normalform}.
\end{cor}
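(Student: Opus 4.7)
I propose to prove Corollary~\ref{cor:FDTC} by combining three standard properties of the fractional Dehn twist coefficient $\omega \colon B_3 \to \R$: it is a conjugation-invariant homogeneous quasimorphism of defect $1$ with $\omega(\Delta^2) = 1$; it restricts to a group homomorphism on every abelian subgroup (a general fact for homogeneous quasimorphisms); and $\omega(a) = \omega(b) = 0$ on the Artin generators. Centrality of $\Delta^2$ in $B_3$ combined with the homomorphism property on abelian subgroups yields
\[
\omega(\Delta^{2\ell}\gamma) = \ell + \omega(\gamma) \qquad \text{for every } \gamma \in B_3,
\]
so each of the four cases reduces to computing $\omega$ of the non-central part of the Garside normal form.

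Case \eqref{eq:linkcase} is immediate: $\omega(a^p) = p\omega(a) = 0$ by homogeneity, so $\omega(\Delta^{2\ell}a^p) = \ell$. Case \eqref{eq:torusknotcase} I would handle by direct word manipulation using $\Delta = aba = bab$ and its consequences $a\Delta = \Delta b$ and $b\Delta = \Delta a$. For $p = 1$, $\Delta^{2\ell}ab = (ab)^{3\ell+1}$ and $(ab)^3 = \Delta^2$ give $\omega(ab) = \frac{1}{3}$ by homogeneity. For $p = 2$, the identity $a^2 b = a\Delta a^{-1}$ yields $(a^2 b)^n = a\Delta^n a^{-1}$, conjugate to $\Delta^n$, so $\omega(a^2 b) = \omega(\Delta) = \frac{1}{2}$. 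For $p = 3$, cyclic conjugation gives $a^3 b \sim \Delta b$, and the direct computation $(\Delta b)^3 = \Delta^4$ (using $b\Delta = \Delta a$ to collect powers) gives $\omega(\Delta b) = \frac{2}{3}$. In each sub-case one recovers $\ell + \frac{p+1}{6}$.

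For the main cases \eqref{eq:evenpower} and \eqref{eq:oddpower}, I would apply the explicit conjugation recorded in \Cref{sec:comparisonnormalforms} to rewrite
\[
\gamma \sim \Delta^{2(\ell+r)} \cdot \alpha^{\ast},
\]
where $\alpha^{\ast}$ is an alternating $3$-braid using only $a^{-1}$ and nonnegative powers of $b$, sitting in Murasugi normal form \eqref{eq:case3} with zero central power. By centrality of $\Delta^2$, the problem reduces to the sublemma $\omega(\alpha^{\ast}) = 0$. I would prove this by combining the Malyutin-type bound $\omega(\alpha^{\ast}) \in [0, 1)$ --- coming from the fact that $\alpha^{\ast}$ sits at the bottom of its conjugacy class in the Garside sense and no positive power of $\Delta^2$ can be factored off --- with the same bound applied to $(\alpha^{\ast})^{-1}$; since $\omega$ is homogeneous, $\omega((\alpha^{\ast})^{-1}) = -\omega(\alpha^{\ast})$, and because the analogous reduced Murasugi form applies to $(\alpha^{\ast})^{-1}$ (after applying the mirror involution swapping $a$ and $b$), one obtains $-\omega(\alpha^{\ast}) \in [0,1)$, whence $\omega(\alpha^{\ast}) = 0$. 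Combined with $\omega(\Delta^{2(\ell+r)}) = \ell + r$, this yields the claim.

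The main obstacle is establishing the Malyutin-type bound $\omega(\alpha^{\ast}) \in [0,1)$ from the reduced Murasugi form. The statement is natural (an alternating $3$-braid with no central twist should carry no net boundary rotation), but a clean proof requires a careful Garside-theoretic analysis showing that $\inf(\alpha^{\ast\, n})$ and $\sup(\alpha^{\ast\, n})$ stay $O(1)$ as $n \to \infty$, equivalently a translation-number computation for the canonical lift of $\alpha^{\ast}$ to $\widetilde{PSL}(2,\R)$ under the inclusion $B_3 \hookrightarrow \widetilde{PSL}(2,\R)$. The remaining parts of the corollary are routine applications of homogeneity, conjugation invariance, and centrality of $\Delta^2$.
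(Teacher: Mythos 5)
Your approach is genuinely different from the paper's: the paper computes the homogenization $\widetilde{\upsilon}(\gamma)=\lim_k \upsilon\bigl(\widehat{\gamma^{6k}ab}\bigr)/6k$ in each Garside case (\Cref{prop:homogupsilon}), using the explicit $\upsilon$-formulas from \Cref{sec:upsilonpos}, and then invokes $\omega(\gamma)=\widetilde{\upsilon}(\gamma)+\operatorname{wr}(\gamma)/2$; you instead try to evaluate $\omega$ directly from quasimorphism properties and Garside/Dehornoy data, bypassing the upsilon machinery entirely. Your treatment of cases \eqref{eq:linkcase} and \eqref{eq:torusknotcase} is correct and pleasantly elementary: the identities $(ab)^3=\Delta^2$, $a^2b=a\Delta a^{-1}$ and $a^3b\sim (ab)^2$ combined with homogeneity and $\omega(\Delta^2)=1$ do give $\frac{p+1}{6}+\ell$, and $\omega(\Delta^{2\ell}\gamma)=\ell+\omega(\gamma)$ follows from additivity of homogeneous quasimorphisms on commuting elements. (Note, though, that $\omega(a)=0$ in case \eqref{eq:linkcase} is itself not a formal consequence of the axioms you list; it needs either a citation or the same kind of Dehornoy-floor argument you defer to later.)

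The problem is that the entire content of cases \eqref{eq:evenpower} and \eqref{eq:oddpower} --- which is where the theorem actually lives --- is concentrated in the sublemma $\omega(\alpha^{\ast})=0$ for the reduced alternating word $\alpha^{\ast}$, and you do not prove it; you explicitly flag the bound $\omega(\alpha^{\ast})\in[0,1)$ as "the main obstacle" requiring "a careful Garside-theoretic analysis" that is not carried out. This is a genuine gap, not a routine verification: $\alpha^{\ast}$ is a word containing negative generators, so its Garside infimum is very negative and its supremum positive, and the naive translation-number bounds via $\inf(\alpha^{\ast n})/2n$ and $\sup(\alpha^{\ast n})/2n$ are nowhere near tight. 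What one actually needs is Malyutin's inequality $\lfloor\beta\rfloor_D\le\omega(\beta)\le\lfloor\beta\rfloor_D+1$ together with a proof that the Dehornoy floor of \emph{every power} of an alternating $3$-braid word stays in $\{-1,0\}$ (equivalently, that no power of $\alpha^{\ast}$ absorbs a full twist); that statement is true but is precisely the nontrivial input, and your sketch does not establish it. I would also point out that your claimed interval $[0,1)$ has the wrong sign conventions --- for a $\sigma_1$-negative word like $\alpha^{\ast}$ the Dehornoy floor is $-1$, giving $\omega(\alpha^{\ast})\in[-1,0]$ --- although the symmetry (or, more simply, the argument $\omega(\alpha^{\ast})=\omega(\alpha^{\ast n})/n\in[-1/n,1/n]$ applied to all powers) would still force $\omega(\alpha^{\ast})=0$ once the floor computation is in place. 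As written, the proposal proves the easy cases and reduces the hard cases to an unproven claim.
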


\begin{rem}
The fractional Dehn twist coefficient was computed for $3$-braids in Murasugi normal form (cf.~\Cref{def:murasuginormalform}) in \cite[Proposition 6.6]{hubbard2021braids}.
\end{rem}

In the proof of \Cref{cor:FDTC}, we will use that the fractional Dehn twist coefficient of any $3$-braid $\gamma$ is completely determined by the writhe $\operatorname{wr}(\gamma )$ and the \emph{homogenized upsilon invariant} $\widetilde{\upsilon}$ of $\gamma$: we have
\begin{align}\label{eq:FDTChomoUpsilon}
\omega(\gamma ) 
= \widetilde{\upsilon}(\gamma) + \frac{\operatorname{wr}(\gamma )}{2} \qquad \text{\cite[Theorem 1.3]{Feller_2019}}
\end{align}
for any $3$-braid $\gamma$.
The invariant $\widetilde{\upsilon}$ is another real-valued homogeneous quasimorphism on the braid group $B_3$ which can be defined as
\begin{align*}
\widetilde{\upsilon} \colon B_3 \to \R, \qquad 
\gamma \mapsto \widetilde{\upsilon}\left(\gamma\right) = \lim_{k \to \infty} \frac{\upsilon\left(\widehat{\gamma^{6k}ab}\right)}{6k}.
\end{align*}
More generally, Brandenbursky \cite[Theorem 2.6]{brandenbursky} showed that a homogeneous quasimorphism $B_n \to \R$ can be assigned to any concordance homomorphism $\CC \to \R$ that is bounded above by a constant multiple of the $4$-genus. We refer the reader to \cite{brandenbursky} or \cite[Appendix A]{Feller_2019} for more details on homogenized concordance invariants.

\begin{prop}\label{prop:homogupsilon}
Let $\gamma$ be a $3$-braid. Then
\begin{align*}
\widetilde{\upsilon}(\gamma) = 
\begin{cases}
-\frac{p}{2} -2\ell & \text{if } \gamma \text{ is conjugate to a braid in } \eqref{eq:linkcase},\\
-\frac{p+1}{3} -2\ell & \text{if } \gamma \text{ is conjugate to a braid in } \eqref{eq:torusknotcase},\\
-\frac{\sum\limits_{i=1}^r \left(p_i + q_i\right)}{2} +r -2\ell & \text{if } \gamma \text{ is conjugate to a braid in } \eqref{eq:evenpower}, \\
-\frac{\sum\limits_{i=1}^{r-1} \left(p_i + q_i\right)+p_r}{2} +r -2\ell-\frac{3}{2} & \text{if } \gamma \text{ is conjugate to a braid in } \eqref{eq:oddpower}. 
\end{cases}
\end{align*}
\end{prop}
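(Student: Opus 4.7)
The plan is to verify the formula for each Garside normal form by computing $\upsilon(\widehat{\gamma^{6k}ab})$ explicitly via \Cref{lem:Upsilontorus}, \Cref{prop:posupsilon}, or \Cref{thm:upsilon} as appropriate, and taking the limit divided by $6k$.

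The first reduction uses that a homogeneous quasimorphism is conjugation-invariant and additive on commuting elements. Since $\Delta^{2\ell}$ is central in $B_3$, this yields $\widetilde{\upsilon}(\Delta^{2\ell}\alpha) = \widetilde{\upsilon}(\Delta^{2\ell}) + \widetilde{\upsilon}(\alpha)$. Taking $\alpha$ trivial (case \eqref{eq:linkcase} with $p=0$), the identity $\Delta^{12k\ell}\cdot ab = (ab)^{18k\ell+1}$ shows $\widehat{\Delta^{12k\ell}ab} = T_{3,18k\ell+1}$, so \Cref{lem:Upsilontorus} gives $\upsilon = -12k\ell$, hence $\widetilde{\upsilon}(\Delta^{2\ell})=-2\ell$. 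Thus I only need to treat the ``$\ell=0$'' part in each case (or $\Delta\beta$ in case \eqref{eq:oddpower}).

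For the torus-type cases \eqref{eq:linkcase} and \eqref{eq:torusknotcase}: I exploit the collapsing identities $(a^2b)^2=\Delta^2$ (for case \eqref{eq:torusknotcase} with $p=2$) and $a^3b\sim a\Delta$ together with $(a\Delta)^3=\Delta^4$ (for $p=3$, proved by pushing $\Delta$ using $\Delta a = b\Delta$). After these collapses, $\gamma^{6k}ab$ becomes a pure power $(ab)^N$ closing to a torus knot $T_{3,N}$, and the formula follows from \Cref{lem:Upsilontorus}. In case \eqref{eq:linkcase} with $p\geq 1$ I use the Garside reduction $\Delta^{2L}a^Pb\sim\Delta^{2L+1}a^{P-2}$ once to rewrite $\gamma^{6k}ab$ as $\Delta^{12k\ell+1}a^{6kp-1}$ in Garside normal form \eqref{eq:oddpower} with $r=1$, then apply \Cref{prop:posupsilon} (converting to Murasugi form via \Cref{sec:comparisonnormalforms} and using \Cref{thm:upsilon} when $\ell<0$).

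For case \eqref{eq:evenpower}: the key observation is that the trailing pattern $b^{q_r}ab = b^{q_r-1}\Delta$ lets one extract a factor of $\Delta$ from $\alpha^{6k}ab$ (where $\alpha$ is the non-$\Delta$ part). Cyclically conjugating this $\Delta$ to the front and absorbing it via $b^n\Delta^{2L+1} = \Delta^{2L+1}a^n$ produces an explicit braid of Garside normal form \eqref{eq:oddpower} with $r_{\text{new}}=6kr$ $a$-blocks, whose first $a$-exponent is $p_1+q_r-1\geq 3$ and the rest $\geq 2$. Applying \Cref{prop:posupsilon} case \eqref{eq:oddpower} gives $\upsilon(\widehat{\gamma^{6k}ab}) = -3k\sum_i(p_i+q_i) + 6kr - 1$, and dividing by $6k$ yields the claimed formula. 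For case \eqref{eq:oddpower}, the key additional ingredient is the identity $(\Delta\beta)^{2k} = \Delta^{2k}(\beta^*\beta)^k$ — a consequence of $\beta\Delta=\Delta\beta^*$, where $\beta^*$ is $\beta$ with $a,b$ swapped — which rewrites $(\Delta\beta)^{6k} = \Delta^{6k}(\beta^*\beta)^{3k}$ with $\Delta^{6k}$ central; after multiplying by $ab$ and a cyclic conjugation that merges the trailing $b^1$-block with the leading $b^{p_1}$-block, one obtains a braid of Garside normal form \eqref{eq:evenpower} with $\ell_{\text{new}}=3k$ and $r_{\text{new}}=6kr-3k$ blocks, and \Cref{prop:posupsilon} together with the limit yields the formula.

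The main obstacle is bookkeeping in the last step: one must verify that all the block exponents after the cyclic conjugations remain $\geq 2$ so that \Cref{prop:posupsilon} applies, and that the $\Delta^{6k}$-power combines with the parity of the residual block pattern (starting with $b$ and ending with $b$ after the $ab$ is appended) to land in case \eqref{eq:evenpower} rather than \eqref{eq:oddpower}. The assumption $p_i,q_i\geq 2$ in the Garside normal form is exactly what guarantees both conditions, since the problematic exponent-$1$ blocks coming from the extra $ab$ always merge with adjacent exponent-$\geq 2$ blocks to produce exponents $\geq 3$.
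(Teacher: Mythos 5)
Your proposal is correct and follows essentially the same route as the paper's proof: reduce to the positive part via additivity of $\widetilde{\upsilon}$ on commuting elements and $\widetilde{\upsilon}(\Delta^{2\ell})=-2\ell$, collapse cases \eqref{eq:linkcase} and \eqref{eq:torusknotcase} to torus knots, and in cases \eqref{eq:evenpower} and \eqref{eq:oddpower} rewrite $\gamma^{6k}ab$ (by extracting a $\Delta$ from the trailing $b^{q_r}ab$, respectively by $(\Delta\beta)^{6k}=\Delta^{6k}(\beta^\ast\beta)^{3k}$) into Garside normal form with the same block counts $6kr$ and $6kr-3k$ that the paper obtains, before applying \Cref{prop:posupsilon} and taking the limit. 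The bookkeeping concerns you flag at the end are exactly the points the paper's proof addresses, and your resolution (merged exponents are $\geq 3$ because $p_i,q_i\geq 2$) matches.
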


\begin{proof}[Proof of \Cref{prop:homogupsilon}]
We will use that $\widetilde{\upsilon}(\alpha \beta)=\widetilde{\upsilon}(\alpha) + \widetilde{\upsilon}(\beta)$ if $\alpha$ and $\beta$ commute \cite[Lemma A.1]{Feller_2019}. In particular, for any $3$-braid $\gamma$ and any $\ell \in \Z$, we have 
\begin{align}\label{eq:homogUpstwist}
\widetilde{\upsilon}\left(\Delta^{2\ell} \gamma\right) = \widetilde{\upsilon}\left(\Delta^{2\ell}\right) + \widetilde{\upsilon}(\gamma) .
\end{align}
Moreover, by the definition of $\widetilde{\upsilon}$, \Cref{lem:Upsilontorus} and the homogeneity of $\widetilde{\upsilon}$, we have 
\begin{align}\label{eq:homogfulltwist}
\widetilde{\upsilon}\left(\Delta^{2\ell}\right) = -2\ell \qquad \text{for all } \ell \in \Z.
\end{align}
We will now compute $\widetilde{\upsilon}(\gamma)$ for the positive $3$-braids $\gamma$ of the form \eqref{eq:linkcase}--\eqref{eq:oddpower}, \ie assuming $\ell \geq 0$ in \eqref{eq:linkcase}--\eqref{eq:oddpower}. The statement of \Cref{prop:homogupsilon} will then follow from \eqref{eq:homogUpstwist} and \eqref{eq:homogfulltwist}. 

First, let $\gamma = \Delta^{2\ell}a^p$ for some $\ell\geq 0, \, p \geq 0$. If $p =0$, we have $\widetilde{\upsilon}(\gamma) = -2\ell$ by \eqref{eq:homogfulltwist}.
If $p \geq 1$, we have
\begin{align*}
\gamma^{6k}ab = \Delta^{12\ell k}a^{6pk} ab \sim \Delta^{12\ell k+1}a^{6pk-1},
\end{align*}
so by \Cref{lem:Upsilon1_Delta2l+1}, for $k \geq 1$, we get
\begin{align*}
\upsilon\left(\widehat{ \gamma^{6k}ab }\right) & = -\frac{6pk-1}{2}+1-12\ell k-\frac{3}{2} = -3pk -12\ell k, \qquad \text{hence}\\
\widetilde{\upsilon}(\gamma)& = 
\lim_{k \to \infty} \frac{\upsilon\left(\widehat{ \gamma^{6k}ab }\right)}{6k}
=\lim_{k \to \infty} \frac{-3pk -12\ell k}{6k}
= -\frac{p}{2}-2\ell .
\end{align*}

Second, let $\gamma = \Delta^{2\ell}a^p b$ for some $\ell \geq 0, \, p \in \{1,2,3\}$.
We have
\begin{align*}
\gamma^{6k}ab &= \Delta^{12\ell k}\left(ab\right)^{6k} ab
= \Delta^{12\ell k +4k}ab &\text{if } p = 1,\\
\gamma^{6k}ab &
=
\Delta^{12\ell k}\left(a^2ba^2b\right)^{3k} ab
=\Delta^{12\ell k}\left(ababab\right)^{3k} ab
= \Delta^{12\ell k +6k}ab &\text{if } p = 2, \text{ and}\\
\gamma^{6k}ab &
=
\Delta^{12\ell k}\left(a^3ba^3ba^3b\right)^{2k} ab
=\Delta^{12\ell k}\left(a^2babababa^2b\right)^{2k} ab&
\\&= \Delta^{12\ell k +8k}ab &\text{if } p = 3.
\end{align*}
By \Cref{lem:Upsilontorus}, we get
\begin{align*}
\widetilde{\upsilon}(\gamma)&
= \lim_{k \to \infty} \frac{-12\ell k - (2p+2)k}{6k} = -2\ell - \frac{p+1}{3}.
\end{align*}

Third, let $\gamma = \Delta^{2\ell}a^{p_1}b^{q_1}\cdots a^{p_r}b^{q_r}  $ for some $\ell \geq 0$, $r\geq 1$, $ p_i,q_i\geq 2$, $i \in \{ 1, \dots, r\}$. Then
\begin{align*}
\gamma^{6k}ab &= \Delta^{12\ell k}\left(a^{p_1}b^{q_1}\cdots a^{p_r}b^{q_r}\right)^{6k} ab \\&\sim \Delta^{12\ell k+1}a^{p_1-1}b^{q_1}\cdots a^{p_r}b^{q_r}\left(a^{p_1}b^{q_1}\cdots a^{p_r}b^{q_r}\right)^{6k-1}\\
&\sim \Delta^{12\ell k+1}\left(b^{q_1}a^{p_2}b^{q_2}\cdots a^{p_r}b^{q_r}a^{p_1}\right)^{6k-1}b^{q_1}a^{p_2}b^{q_2}\cdots a^{p_r}b^{p_1+q_r-1},
\end{align*}
where $p_1+q_r-1 \geq 3$. By \Cref{lem:Upsilon1_Delta2l+1}, we have
\begin{align*}
\upsilon\left(\widehat{ \gamma^{6k}ab }\right) 
&
= -3k \sum_{i=1}^r (p_i+q_i)+6kr-12\ell k-1, \qquad \text{hence}\\
\widetilde{\upsilon}(\gamma)&
= -\frac{1}{2} \sum_{i=1}^r (p_i+q_i)+r-2\ell .
\end{align*}

Finally, let $\gamma = \Delta^{2\ell+1}a^{p_1}b^{q_1}\cdots a^{p_{r-1}}b^{q_{r-1}}a^{p_r}$ for some $l \geq 0$, $r\geq 1$, $p_r \geq 2$, $p_i, q_i\geq 2,i \in \{ 1, \dots, r-1\}$. Then
\begin{align*}
\gamma^{6k}ab &
= \Delta^{12\ell k}\left( \Delta a^{p_1}b^{q_1}\cdots a^{p_{r-1}}b^{q_{r-1}}a^{p_r}\right)^{6k}ab \\
&= \Delta^{12\ell k}\left( \Delta^2 b^{p_1}a^{q_1}\cdots b^{p_{r-1}}a^{q_{r-1}}b^{p_r} a^{p_1}b^{q_1}\cdots a^{p_{r-1}}b^{q_{r-1}}a^{p_r}\right)^{3k}ab \\
&=\Delta^{12\ell k+6k}\left(b^{p_1}\cdots b^{p_r} a^{p_1}\cdots a^{p_r}\right)^{3k}ab   \\
&\sim \Delta^{12\ell k+6k}a^{q_1}b^{p_2}\cdots b^{p_r} a^{p_1}\cdots a^{p_r}\left(b^{p_1}\cdots b^{p_r} a^{p_1}\cdots a^{p_r}\right)^{3k-2}\\&\qquad b^{p_1}\cdots b^{p_r} a^{p_1}\cdots a^{p_r+1}b^{p_1+1},
\end{align*}
where $p_r+1, \,p_1+1 \geq 3$. 
By \Cref{lem:Upsilon1_Delta2lCase1}, we have
\begin{align*}
\upsilon\left( \widehat{\gamma^{6k}ab }\right) &= 
-3k \left(\sum_{i=1}^{r-1} (p_i+q_i)+p_r\right)+6kr-12\ell k-9k-1, \qquad \text{hence}\\
\widetilde{\upsilon}(\gamma)&
= -\frac{1}{2} \left(\sum_{i=1}^{r-1} (p_i+q_i)+p_r\right)+r-2\ell -\frac{3}{2}.\qedhere
\end{align*}
\end{proof}

\begin{proof}[Proof of \Cref{cor:FDTC}]
This follows directly from \Cref{prop:homogupsilon}, \Cref{eq:FDTChomoUpsilon}, and a straightforward calculation of the writhe of the braids in \eqref{eq:linkcase}--\eqref{eq:oddpower}.
\end{proof}

\begin{rem}
If $\gamma$ is a $3$-braid conjugate to a braid of the form in \eqref{eq:evenpower} or \eqref{eq:oddpower} such that $\widehat{\gamma}$ is a knot, then \Cref{prop:homogupsilon} and \Cref{thm:upsilon} imply $\widetilde{\upsilon}(\gamma) = \upsilon\left(\widehat{\gamma}\right)$. If $\gamma$ additionally is a positive $3$-braid, 
then $\omega(\gamma) = r+\ell = g\left(\widehat{\gamma}\right) + \upsilon\left(\widehat{\gamma}\right) +1$ is the minimal number from \Cref{cor:switches}/\Cref{cor:switches2}.
\end{rem}

\begin{rem}
Our computation of $\omega(\gamma)$ in \Cref{cor:FDTC} together with \cite[Theorem 1.3]{Feller_2019} completely determines $\widetilde{\Upsilon(t)}(\gamma)$ for all $0\leq t \leq 1$ for any $3$-braid $\gamma$, where $\widetilde{\Upsilon(t)}(\gamma)$ is the homogenization of the invariant $\Upsilon(t) \colon \CC \to \R$, defined similarly as the homogenization $\widetilde{\upsilon}$ of $\upsilon$.
\end{rem}

\bibliographystyle{alpha}
\bibliography{bibliography}

\end{document}